\newcommand{\R}{\mathbb{R}}
\newcommand{\Z}{\mathbb{Z}}
\newcommand{\N}{\mathbb{N}}
\newcommand{\Pf}{\mathcal{P}}
\newcommand{\C}{\mathcal{C}}
\newcommand{\G}{\mathcal{G}}
\newcommand{\st}{\;|\;}
\newcommand{\Cdagger}{\C^\dagger}
\newcommand{\NC}{\mathcal{NC}}
\newcommand{\NCdagger}{\NC^\dagger}
\DeclareMathOperator{\Homeo}{Homeo}
\DeclareMathOperator{\mcg}{MCG}
\newcommand{\tl}[1]{\operatorname{tl}(#1)}
\newcommand{\ssm}{\smallsetminus}
\newcommand{\dist}{d}
\newcommand{\distdagger}{d^\dagger}
\newcommand{\sat}[1]{{#1}^{\textrm{sat}}}
\newcommand{\diam}{\operatorname{diam}}
\newcommand{\Span}[1]{\left\langle #1\right\rangle}
\newcommand{\Csurv}{\C^{\text{surv}}}
\newcommand{\AC}{\mathcal{AC}}
\newtheorem{thm}{Theorem}[section]
\newtheorem{lemma}[thm]{Lemma}
\newtheorem{prop}[thm]{Proposition}
\newtheorem{cor}[thm]{Corollary}
\theoremstyle{definition}
\newtheorem{rmk}[thm]{Remark}
\title[Approximating stable translation lengths on fine curve graphs]{Approximating stable translation lengths\\ on fine curve graphs}
\author{Federica Fanoni}
\email{federica.fanoni@u-pec.fr}
\address{CNRS, Univ Paris Est Creteil, Univ Gustave Eiffel, LAMA UMR8050, F-94010 Creteil, France}
\author{Sebastian Hensel}
\email{hensel@math.lmu.de}
\address{Mathematisches Institut der Universit\"at M\"unchen, Theresienstr. 39, 80333 M\"unchen, Germany}
\author{Frédéric Le Roux}
\email{frederic.le-roux@imj-prg.fr}
\address{IMJ-PRG, Sorbonne Université, 4 place Jussieu, 75252 Paris Cedex 05, France}
\date{\today}
\begin{document}

\begin{abstract}
We study the stable translation length of homeomorphisms of a surface acting on the fine nonseparating curve graph and compare it to the stable translation lengths of its finite approximations --- mapping classes relative to a finite invariant set --- acting on the nonseparating curve graph. We prove that the stable translation length of a homeomorphism with a dense set of periodic points is the supremum of the stable translation lengths of its approximations, and that the stable translation length is preserved under cell-like extensions. We deduce that homotopically trivial homeomorphisms of the torus have stable translation length which is the supremum of the stable translation lengths of their finite approximations. We show that the supremum is not always a maximum, by proving that the stable translation length of a mapping class acting on the nonseparating curve graph is rational.
\end{abstract}

\maketitle

\section{Introduction}

Given a surface \(S\), a classically studied object is the group \(\Homeo(S)\) of its homeomorphisms. Since a few years, this group and its elements have been studied through the lenses of geometric group theory, by looking at the action of \(\Homeo(S)\) on the \emph{fine curve graph} \(\Cdagger(S)\) (first introduced in \cite{bhw_quasi}). This viewpoint has already been very useful to understand algebraic properties of this group and to link coarse geometric aspects to dynamical features of homeomorphisms (see e.g.\ \cite{bhw_quasi, bhmmw_rotation, gm_parabolic, gm_hyperbolic, einabadi_torus, bhw_towards}).

A key tool from \cite{bhw_quasi} is a local approximation result (Lemma \ref{lem:local-approximation} below), which --- roughly speaking --- says that distances in fine (nonseparating) curve graphs can be approximated via much better known graphs, the (nonseparating) curve graphs of punctured surfaces. In this article, we are interested in the stable translation length of homeomorphisms acting on fine nonseparating curve graphs and in whether they can be approximated via stable translation lengths of associated mapping classes acting on nonseparating curve graphs of punctured surfaces.

In what follows, \(S\) is a closed orientable surface of genus at least one and \(\NCdagger(S)\) denotes the nonseparating fine curve graph of \(S\) (see Section~\ref{sec:prelim} for the definitions). If \(P\) is a finite set of points of \(S\), \(\NC(S\ssm P)\) denotes the nonseparating curve graph of the surface \(S\ssm P\). Given a homeomorphism \(f\) of a surface \(S\), we denote by \(\tl{f}\) its stable translation length on \(\NCdagger(S)\). Let \(\Pf(f)\) be the set
\[\Pf(f):=\{P\subset S\st f(P)=P, |P|<\infty\}.\]

For \(P\in\Pf(f)\), we denote by \([f]_P\) the mapping class \([f|_{S\ssm P}\in\mcg(S\ssm P)\) and by \(\tl{[f]_{P}}\) its stable translation length on \(\NC(S\ssm P)\). We call \([f]_P\) a \emph{finite approximation} of \(f\).

Our first result shows that if a homeomorphism has many periodic points, its stable translation length is the supremum of the stable translation lengths of its finite approximations.

\begin{thm}\label{thm:dense}
Let \(S\) be a closed orientable surface of genus at least one and \(f\in\Homeo(S)\). If the set of periodic points of \(f\) is dense in \(S\), then 
    \[\tl{f}=\sup_{P\in\Pf(f)}\tl{[f]_{P}}.\]
\end{thm}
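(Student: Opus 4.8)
The plan is to prove the two inequalities separately; only one direction uses the hypothesis on periodic points.

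\emph{The inequality $\tl f\ge\sup_{P\in\Pf(f)}\tl{[f]_P}$.} Fix $P\in\Pf(f)$ and a nonseparating curve $\beta$ of $S$ disjoint from the finite set $P$; then $[\beta]$ is a vertex of $\NC(S\ssm P)$, and since $f(P)=P$ so is each $[f^m\beta]$. Any edge-path of disjoint nonseparating curves in $\NCdagger(S)$ from $\beta$ to $f^m\beta$ can be perturbed off the finitely many points of $P$ without creating intersections, yielding a path of the same length in $\NC(S\ssm P)$; hence $\dist_{\NC(S\ssm P)}([\beta],[f^m\beta])\le\dist_{\NCdagger(S)}(\beta,f^m\beta)$, and dividing by $m$ and letting $m\to\infty$ gives $\tl{[f]_P}\le\tl f$. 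The same perturbation argument applied to an inclusion $P\subseteq P'$ shows $\tl{[f]_P}\le\tl{[f]_{P'}}$, so the family $(\tl{[f]_P})_P$ is monotone.

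\emph{Reduction of the reverse inequality.} We may assume $\tl f>0$, since otherwise the first inequality already gives equality. Fix a nonseparating curve $\beta$ and put $a_m:=\dist_{\NCdagger(S)}(\beta,f^m\beta)$; by subadditivity $a_m/m\to\tl f$, and therefore
\[\frac{a_{2N}-a_N}{N}=2\cdot\frac{a_{2N}}{2N}-\frac{a_N}{N}\xrightarrow[N\to\infty]{}\tl f.\]
It thus suffices to show that for every sufficiently large $N$ there is $P\in\Pf(f)$ with $\tl{[f]_P}\ge(a_{2N}-a_N)/N-O(1/N)$: taking the supremum over $P$ and then $N\to\infty$ gives $\sup_P\tl{[f]_P}\ge\tl f$. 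We will use that $\NC(S\ssm P)$ is connected and hyperbolic with a hyperbolicity constant $\delta$ independent of $P$ (uniform hyperbolicity of curve graphs), which is what keeps the error of size $O(1/N)$.

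\emph{Construction of $P$ and the hyperbolic estimate.} Choose a countable dense set $D$ of periodic points of $f$ and let $\hat D:=\bigcup_{p\in D}\orb(p)$, still countable since every orbit is finite; by a general position argument take $\beta$ disjoint from $\hat D$. Fix $N$ large, so that $(a_{2N}-a_N)/N>\tl f-\varepsilon$ and $a_{2N}-a_N$ exceeds a fixed multiple of $\delta$, let $G,G'$ be $\NCdagger(S)$-geodesics from $\beta$ to $f^N\beta$ and to $f^{2N}\beta$, and let $\Sigma$ be the finite union of curves occurring in them. By Lemma~\ref{lem:local-approximation}, used in the robust form "every sufficiently dense finite set disjoint from $\beta$ works", there is $\delta_0>0$ such that any $\delta_0$-dense finite set disjoint from $\beta$ realises $\dist_{\NC(S\ssm P)}([\beta],[f^N\beta])=a_N$ and $\dist_{\NC(S\ssm P)}([\beta],[f^{2N}\beta])=a_{2N}$. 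Pick a finite $\delta_0$-net $D_0\subseteq D$ and set $P:=\bigcup_{p\in D_0}\orb(p)$: this is finite, $f$-invariant, $\delta_0$-dense, and contained in $\hat D$, hence disjoint from $\beta$ and, by invariance, from every $f^m\beta$. So $P$ witnesses the two displayed distances; since "being a witness" is preserved by homeomorphisms preserving $P$, the set $P$ also witnesses $(f^{iN}\beta,f^{(i+1)N}\beta)$ and $(f^{iN}\beta,f^{(i+2)N}\beta)$ for every $i$. Put $Y:=\NC(S\ssm P)$, $x_i:=[f^{iN}\beta]=[f]_P^{iN}[\beta]$, and concatenate geodesics $[x_i,x_{i+1}]$. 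By $[f]_P^N$-equivariance and the witnessing just established, $\dist_Y(x_i,x_{i+1})=a_N$ for all $i$ and every Gromov product $\langle x_{i-1},x_{i+1}\rangle_{x_i}$ equals $a_N-\tfrac12 a_{2N}$; the choice of $N$ makes $a_N$ large compared to twice this quantity, so the standard local-to-global principle in hyperbolic spaces shows $(x_i)$ is a quasigeodesic with $\dist_Y(x_0,x_K)\ge K(a_{2N}-a_N)-K\cdot C(\delta)+O(1)$. Dividing by $KN$ and letting $K\to\infty$ gives $\tl{[f]_P}\ge(a_{2N}-a_N-C(\delta))/N>\tl f-\varepsilon$, as needed.

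\emph{Where the difficulty lies.} The crux is the construction of $P$: it must simultaneously be fine enough to witness the geodesics between $\beta$ and $f^N\beta,f^{2N}\beta$ (which forces $P$ to depend on $N$ and to grow with $N$), be disjoint from $\beta$, and be $f$-invariant --- the last two in tension, since an $f$-invariant set disjoint from $\beta$ must avoid the entire, possibly dense, orbit of $\beta$. The resolution is to choose the basepoint curve $\beta$ so as to avoid a countable dense set of \emph{periodic} orbits from the outset, and then to assemble $P$ from a fine subcollection of those very orbits; this is the only place the density of periodic points is used, and it is also why one needs Lemma~\ref{lem:local-approximation} in the uniform form "every sufficiently dense finite set witnesses" rather than for one specific, possibly non-invariant, finite set. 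A secondary technical ingredient is the uniform-in-$P$ hyperbolicity of $\NC(S\ssm P)$, used to ensure the final error term is $O(1/N)$.
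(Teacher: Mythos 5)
Your overall architecture is close to the paper's: the easy inequality via the distance non-increasing projection \(\NCdagger(S)\to\NC(S\ssm P)\) (the paper's Remark \ref{rmk:inequality}), and the hard inequality by choosing a finite \(f\)-invariant set \(P\) of periodic points which ``witnesses'' fine distances between iterates of a base curve, then running a local-to-global argument in the uniformly hyperbolic graph \(\NC(S\ssm P)\) (you use a Gromov-product chain criterion with the telescoped quantity \((a_{2N}-a_N)/N\), where the paper uses a displacement-minimizing curve, Lemma \ref{lem:quasigeods} and Lemma \ref{lem:local-to-global}; that difference is harmless). However, there is a genuine gap at the crucial witnessing step. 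You invoke Lemma \ref{lem:local-approximation} in a ``robust form'' asserting that every sufficiently dense finite set disjoint from \(\beta\) realises \(\dist_P([\beta],[f^N\beta])=\distdagger(\beta,f^N\beta)\). The lemma requires the two curves to be \emph{transverse} and in minimal position in \(S\ssm P\); for an arbitrary homeomorphism \(f\), the image \(f^N\beta\) is only a topological curve and need not be transverse to \(\beta\) at all --- the intersection \(\beta\cap f^N\beta\) can contain arcs or be infinite --- so there is no finite collection of bigons, ``\(\delta_0\)-dense implies minimal position'' has no meaning, and the claimed equality is unjustified. Even in the transverse case your uniform statement is not the lemma as quoted and would itself need the bigon-criterion argument; but the real obstruction is non-transversality, which is exactly the point your proof never addresses.

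The paper resolves precisely this issue by never applying Lemma \ref{lem:local-approximation} to \(f^i(\alpha)\) itself: it chooses auxiliary curves \(\alpha_i\) transverse to \(\alpha\), \(\mathcal{C}^0\)-close to \(f^i(\alpha)\) with \(\distdagger(\alpha_i,f^i(\alpha))\leq 2\), puts \(\alpha\) and the \(\alpha_i\) in minimal position relative to a finite invariant set of periodic points, and then carries the additive error \(2\) through the estimates (whence a local \((K+4)\)-quasigeodesic and the final \((2M+2)/k\) term), rather than obtaining exact equalities. Your argument can be repaired in the same way --- replace \(f^N\beta\) and \(f^{2N}\beta\) by transverse approximating curves, accept \(\pm 2\) errors in \(a_N\) and \(a_{2N}\) (harmless for your \(O(1/N)\) bookkeeping), and only then choose the invariant set \(P\) from periodic orbits avoiding the finitely many curves involved --- but as written the step ``\(P\) witnesses the two displayed distances'' does not follow from the stated lemma, and it is the heart of the proof.
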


We then show that stable translation lengths of homeomorphisms which are semi-conjugated via a cell-like map are equal:

\begin{thm}    \label{thm:semi-conjugacy}
Let \(S,\bar{S}\) be closed orientable surfaces of genus at least one and \(f\in\Homeo(S), \bar{f}\in\Homeo(\bar{S})\). Suppose \(q:S\to \bar{S}\) is a cell-like semi-conjugacy between \(f\) and \(\bar{f}\). Then:
    \begin{enumerate}
        \item \(\tl{f}=\tl{\bar{f}}\);
        \item for every \(\bar{f}\)-invariant finite subset \(\bar{P}\subset \bar{S}\) and every \(f\)-invariant  finite subset \(Q\subset S\) such that \(q(Q)=\bar{P}\), we have
    \[\tl{[f]_{Q}}=\tl{[\bar{f}]_{\bar{P}}}.\]
    \end{enumerate}
\end{thm}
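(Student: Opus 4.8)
The plan is to show that the cell-like semi-conjugacy $q$ allows one to transport nonseparating simple closed curves back and forth between $S$ and $\bar S$ without distorting graph distances (only up to a bounded additive error, which washes out after dividing by $n$) and compatibly with $f$ and $\bar f$, both in the fine nonseparating curve graphs --- which will give (1) --- and in the nonseparating curve graphs of the punctured surfaces --- which will give (2). Exact equality, rather than a quasi-isometry statement, comes precisely from the absence of multiplicative distortion.

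I would set up two transport operations. \emph{Pull-back:} for a nonseparating curve $\bar c\subset\bar S\ssm\bar P$ (resp.\ $\bar c\subset\bar S$), the compactum $q^{-1}(\bar c)$ separates $S$ exactly as $\bar c$ separates $\bar S$, so a small regular neighbourhood of it --- disjoint from $Q$ in the relative case, since $Q\subseteq q^{-1}(\bar P)$ --- contains a nonseparating curve $c$; preimages of disjoint sets are disjoint, so a geodesic in $\NC(\bar S\ssm\bar P)$ (resp.\ $\NCdagger(\bar S)$) pulls back to a path of the same length in $\NC(S\ssm Q)$ (resp.\ $\NCdagger(S)$), and one can arrange that the endpoints $\bar\alpha,\bar f^{\,n}\bar\alpha$ pull back to a chosen $\alpha$ and to $f^n\alpha$. \emph{Push-forward:} after isotoping a finite configuration of curves so that $q$ is injective on it away from fixed small disk neighbourhoods of the fibres over $\bar P$ --- possible because, by upper semicontinuity of the decomposition of $S$ into $q$-fibres, only finitely many fibres have diameter bounded below, so all but finitely many of them can simply be avoided and general position handles the rest --- the $q$-image of each curve is again a simple closed curve, except that at the points of $\bar P$ it must be resolved off the puncture; this resolution is canonical up to twisting along a peripheral curve, which is trivial in the mapping class group, so push-forward is a well-defined $1$-Lipschitz map $\NC(S\ssm Q)\to\NC(\bar S\ssm\bar P)$ preserving the nonseparating condition (and in the fine setting, absent punctures, it is just the image curve, well-defined up to an annular neighbourhood). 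Since pull-back and push-forward are mutually inverse up to the ambiguities just mentioned and intertwine the dynamics through $q\circ f=\bar f\circ q$, for corresponding basepoints $\bar\alpha=q(\alpha)$ (with $\alpha$ chosen so that $q(\alpha)$ is embedded) one gets $\big|\,d(\alpha,f^n\alpha)-d(\bar\alpha,\bar f^{\,n}\bar\alpha)\,\big|$ bounded independently of $n$, in both the fine and the punctured graphs; dividing by $n$ gives $\tl{f}=\tl{\bar f}$ and $\tl{[f]_Q}=\tl{[\bar f]_{\bar P}}$.

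It is also possible to deduce (1) from (2): the local approximation Lemma~\ref{lem:local-approximation} presents $d_{\NCdagger(S)}$ as an optimum of the distances $d_{\NC(S\ssm P)}$ over finite sets $P$ (and similarly for $\bar S$), and under $q$ the relevant finite sets upstairs and downstairs correspond --- $P\mapsto q(P)$ one way, $\bar P\mapsto$ a lift chosen inside $q^{-1}(\bar P)$ the other --- with the two punctured-surface distances matched by the very same pull-back/push-forward identifications; this routes (1) through the more rigid punctured setting and is likely the cleaner write-up.

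The step I expect to be the main obstacle is the general-position/resolution input underlying push-forward. A cell-like fibre of $q$ can be wild --- not locally connected, or with nonempty interior --- there may be infinitely many nondegenerate fibres, and a priori a curve cannot be isotoped off all of them at once, while $q(c)$ need not be embedded. Taming this is where genuine cell-like map theory is needed: shrinkability of cellular decompositions of surfaces, approximability of cell-like surjections of (possibly bordered) surfaces by homeomorphisms, and the finiteness --- from upper semicontinuity --- of the set of fibres of diameter bounded below. Compounding this, all of these isotopies and approximations have to be carried out compatibly with the dynamics, so that the homeomorphisms used are equivariant or at least preserve the relation $q\circ f=\bar f\circ q$; reconciling this wild point-set topology with equivariance is the delicate heart of the argument.
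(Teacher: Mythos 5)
Your high-level plan (transport curves between the two surfaces with only a bounded additive error and divide by $n$) is the same as the paper's, but the central construction you propose --- pushing curves forward by $q$ itself after general position and a resolution at the punctures --- is precisely the step you flag as the obstacle, and it is left unexecuted; moreover the route would fail as stated: $q(c)$ need not be embedded, a curve cannot in general be isotoped off the (possibly uncountably many) nondegenerate fibres, and the two resolutions of a strand through a puncture differ by pushing the strand across the puncture, which changes the isotopy class in $\bar S\ssm\bar P$, so the resolution is not canonical. The missing idea is that the approximation theorem you mention only in passing --- a cell-like map between surfaces is a uniform limit of homeomorphisms (Theorem \ref{thm:cell-like}, from \cite{mr_topology}) --- should not be used to tame $q$ but to \emph{replace} it: one never pushes anything forward by $q$, only by the approximating homeomorphisms $q_n$. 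For (1): if $\alpha\cap q^{-1}(\bar\alpha)=\emptyset$, then $f^k(\alpha)\cap q^{-1}(\bar f^k(\bar\alpha))=\emptyset$ for all $k$ by $q\circ f=\bar f\circ q$, and for each fixed $k$ uniform convergence gives an $n$ with $q_n(\alpha)$ disjoint from $\bar\alpha$ and $q_n(f^k(\alpha))$ disjoint from $\bar f^k(\bar\alpha)$, whence $|\distdagger_S(\alpha,f^k(\alpha))-\distdagger_{\bar S}(\bar\alpha,\bar f^k(\bar\alpha))|\leq 2$; dividing by $k$ finishes. For (2): first pass from $Q$ to the saturation $\sat{Q}=q^{-1}(\bar P)$ via the isometry $\NC(S\ssm Q)\to\NC(S\ssm\sat{Q})$ obtained by homotoping curves off disjoint disks containing the fibres (this is also what eliminates your puncture-resolution problem), and then define $\psi:\NC(S\ssm\sat{Q})\to\NC(\bar S\ssm\bar P)$ by $\psi([\alpha])=[q_n(\alpha)]$ for $n$ large, which makes sense because the classes $[q_n(\alpha)]$ are eventually constant; the inverse sends $[\beta]$ to the core of the annulus $q^{-1}(N(\beta))$, and conjugating $[f]_Q$ by this isometry gives $[\bar f]_{\bar P}$, hence equal (in fact exactly equal) translation lengths.

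Relatedly, the issue you single out as the ``delicate heart'' --- carrying out the approximations equivariantly with respect to the dynamics --- is a non-issue and no equivariant approximation is needed. In (1) the comparison is made separately for each power $k$, with an approximating homeomorphism depending on $k$; in (2) one checks equivariance of the induced map $\psi$ (not of the $q_n$) directly from uniform convergence: for $n,m$ large, $q_n(f(\alpha))$ and $\bar f(q_m(\alpha))$ lie in a common regular neighbourhood of $q(f(\alpha))=\bar f(q(\alpha))$ and are therefore homotopic. The only interaction between the point-set topology and the dynamics is the single identity $q\circ f=\bar f\circ q$ applied to preimages, so the gap in your write-up is not the equivariance but the absence of any working push-forward; the $q_n$-based construction above supplies it.
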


As a consequence of the two theorems, using work of García-Sassi and Tal \cite{gst_fully}, we can show that stable translation lengths of homotopically trivial homeomorphisms of the torus are approximated by the stable translation lengths of their finite approximations:

\begin{cor}\label{cor:torus-case}
For any \(f\in\Homeo_0(T^2)\), we have
\[\tl{f}=\sup_{P\in\Pf(f)}\tl{[f]_{P}}.\]
\end{cor}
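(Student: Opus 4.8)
The plan is to transport $f$, through a cell-like semi-conjugacy, to a homeomorphism for which Theorem~\ref{thm:dense} is available, and then to transfer the conclusion back. Two preliminary remarks first. On the torus every essential simple closed curve is nonseparating, so $\NCdagger(T^2)=\Cdagger(T^2)$ and $\tl{f}$ is the stable translation length on the fine curve graph. Moreover, for an arbitrary homeomorphism of an arbitrary surface one always has $\tl{f}\ge\sup_{P\in\Pf(f)}\tl{[f]_{P}}$ (this is the easy inequality in Theorem~\ref{thm:dense}, which comes from the local approximation Lemma~\ref{lem:local-approximation} and does not use density of periodic points); so it is only the reverse inequality that needs an argument.

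By the work of García-Sassi and Tal \cite{gst_fully}, every $f\in\Homeo_0(T^2)$ admits a cell-like semi-conjugacy $q\colon T^2\to T^2$ onto some $\bar f\in\Homeo(T^2)$ which either has $\tl{\bar f}=0$ or has a dense set of periodic points. Theorem~\ref{thm:semi-conjugacy}(1) gives $\tl{f}=\tl{\bar f}$. If $\tl{\bar f}=0$ then $\tl{f}=0$ and we are done, since $0\le\sup_{P}\tl{[f]_{P}}\le\tl{f}=0$. So assume $\bar f$ has dense periodic points; then Theorem~\ref{thm:dense} applied to $\bar f$ yields
\[\tl{f}=\tl{\bar f}=\sup_{\bar P\in\Pf(\bar f)}\tl{[\bar f]_{\bar P}}.\]

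By Theorem~\ref{thm:semi-conjugacy}(2) it now suffices to lift every $\bar P\in\Pf(\bar f)$ to some $Q\in\Pf(f)$ with $q(Q)=\bar P$: this gives $\sup_{P\in\Pf(f)}\tl{[f]_{P}}\ge\sup_{\bar P\in\Pf(\bar f)}\tl{[\bar f]_{\bar P}}=\tl{f}$, which combined with the first paragraph proves the corollary. Decompose $\bar P$ into $\bar f$-orbits $\bar O_{1},\dots,\bar O_{r}$, with $\bar O_{i}$ of period $k_{i}$, and pick $\bar x_{i}\in\bar O_{i}$. The fiber $F_{i}:=q^{-1}(\bar x_{i})$ is a nonempty cell-like compactum, hence a non-separating continuum contained in a topological disk of $T^2$, and $q\circ f^{k_{i}}=\bar f^{k_{i}}\circ q$ forces $f^{k_{i}}(F_{i})=F_{i}$. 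Since $f$ is orientation preserving, the Cartwright--Littlewood fixed point theorem (applied after lifting $f^{k_{i}}$ to the universal cover and choosing the lift preserving a lift of $F_{i}$) provides a fixed point $x_{i}\in F_{i}$ of $f^{k_{i}}$. Then $O_{i}:=\{f^{j}(x_{i}):0\le j<k_{i}\}$ is an $f$-orbit on which $q$ restricts to a bijection onto $\bar O_{i}$, and $Q:=\bigcup_{i}O_{i}\in\Pf(f)$ satisfies $q(Q)=\bar P$, as required.

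The step I expect to be the main obstacle is this last lifting of finite invariant sets. The fibers of a cell-like map need not be simply connected, so one cannot invoke Brouwer directly; the Cartwright--Littlewood theorem is what makes the argument go through, and to apply it one must check that cell-likeness of a fiber inside a surface really does make its lift a non-separating planar continuum, and must handle the $\mathbb{Z}^2$-ambiguity of the lift of $f^{k_{i}}$. (If García-Sassi--Tal's $\bar f$ is instead built from $f$ by a collapsing construction, periodic orbits of $\bar f$ may lift to periodic orbits of $f$ by construction, in which case this point is free; the exact form of the statement imported from \cite{gst_fully} should be matched to whichever route is taken.)
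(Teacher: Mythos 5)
Your proof is correct and follows essentially the same route as the paper: handle the case \(\tl{f}=0\) trivially, use the García-Sassi--Tal cell-like semi-conjugacy onto a homeomorphism with dense periodic points, apply Theorems~\ref{thm:semi-conjugacy} and~\ref{thm:dense}, and lift finite invariant sets through the fibers via the Cartwright--Littlewood theorem. The only cosmetic difference is that the paper makes explicit the two ingredients you leave slightly implicit: hyperbolicity of \(f\) forces the rotation set to have nonempty interior by \cite{bhmmw_rotation}, which is the hypothesis needed to invoke \cite{gst_fully}, and cell-likeness of the fibers follows from \cite[Corollary 5.7]{gst_fully} together with Lemma~\ref{lem:cell-like}.
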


We also know that in general the supremum in the corollary is not realized: indeed, since the translation length is continuous, there are homotopically trivial homeomorphisms of the torus with irrational asymptotic translation length, while the following result shows that the asymptotic translation lengths of the mapping classes are always rational.

\begin{thm}\label{thm:rational}
Let \(S\) be a (not necessarily closed) surface of genus at least one. For any \(\varphi\in\mcg(S)\) acting hyperbolically on \(\NC(S)\), there is \(m>0\) such that \(\varphi^m\) preserves a geodesic. In particular, the stable translation length of \(\varphi\) is rational.
\end{thm}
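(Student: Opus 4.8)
The plan is to transport Bowditch's theory of \emph{tight geodesics} from the curve complex to the nonseparating curve graph, and then to run the classical pigeonhole argument producing a periodic axis. Recall that $\NC(S)$ is Gromov hyperbolic, say $\delta$-hyperbolic. A mapping class $\varphi$ acting hyperbolically is then loxodromic: fixing a vertex $a$, the orbit map $n\mapsto\varphi^{n}a$ is a quasi-isometric embedding of $\Z$ with attracting and repelling fixed points $\varphi^{\pm}\in\partial\NC(S)$, and by stability of quasi-geodesics there is $D=D(\delta)$ such that, for all $m\le n$, every geodesic from $\varphi^{-n}a$ to $\varphi^{n}a$ passes within distance $D$ of each of $\varphi^{-m}a,\dots,\varphi^{m}a$. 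Since $\tl{\varphi^{m}}=m\,\tl{\varphi}$ for every $m\ge 1$, it suffices to find $m$ with $\varphi^{m}$ preserving a bi-infinite geodesic of $\NC(S)$.

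The first, and main, task is to set up tight geodesics in $\NC(S)$. Call a (finite or bi-infinite) geodesic $(\dots,c_{i-1},c_{i},c_{i+1},\dots)$ in $\NC(S)$ \emph{tight} if each interior vertex $c_{i}$ is a nonseparating component of the boundary of a regular neighbourhood of $c_{i-1}\cup c_{i+1}$; note that subpaths of tight geodesics are tight. One needs two properties. \emph{Existence}: any two vertices at distance at least $3$ are joined by a tight geodesic. This follows by the usual tightening procedure — start from an arbitrary geodesic and successively replace interior vertices by appropriate boundary curves of neighbourhoods — the only point beyond the classical argument being to check, via a change-of-coordinates or Euler-characteristic count, that among those boundary curves one may always select one that is \emph{nonseparating} in $S$, hence a vertex of $\NC(S)$. \emph{Finiteness}: for any two vertices $a,b$ there are only finitely many tight geodesics from $a$ to $b$, bounded in number by a function of $d(a,b)$ and $S$ alone. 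This is the technical heart of the argument and the step I expect to be the main obstacle. It rests on a bounded geodesic image theorem for geodesics of $\NC(S)$, controlling the subsurface projections of the successive vertices of a geodesic; because an $\NC(S)$-geodesic is in general only a uniform quasi-geodesic of $\C(S)$ (the inclusion $\NC(S)\hookrightarrow\C(S)$ being a quasi-isometric embedding with uniform constants, not an isometry), this does not follow formally from the statement for $\C(S)$, and one must adapt Bowditch's original argument, or Webb's combinatorial proof of finiteness of tight geodesics, to the nonseparating setting — redoing the relevant projection estimates directly for nonseparating curves if a clean transfer is unavailable.

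Granting existence and finiteness of tight geodesics, the periodicity argument follows Bowditch. Choosing tight geodesics $\sigma_{n}$ from $\varphi^{-n}a$ to $\varphi^{n}a$ and using that they all pass within $D$ of the orbit, a compactness argument — extracting, with the help of finiteness, stabilizing germs of the $\sigma_{n}$ at each distance from $a$ — produces a bi-infinite tight geodesic $\beta$ whose two ends converge to $\varphi^{-}$ and $\varphi^{+}$; in particular $\beta$ fellow-travels the quasi-axis within a uniform distance. Thus the set $\mathcal B$ of bi-infinite tight geodesics with ends at $\varphi^{\pm}$ is nonempty; it is permuted by $\varphi$, since tightness and the endpoints $\varphi^{\pm}$ are preserved by $\varphi$, and it is finite — again by the finiteness of tight geodesics, its elements being confined to a bounded neighbourhood of the quasi-axis. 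Hence some power $\varphi^{m}$ fixes an element $\gamma\in\mathcal B$, i.e.\ $\varphi^{m}$ preserves the bi-infinite geodesic $\gamma$. Finally, since $\varphi^{m}$ is loxodromic it acts on the geodesic line $\gamma$ as a translation by some integer $t\neq 0$, so $d(\gamma(0),\varphi^{mk}(\gamma(0)))=|t|k$ for all $k\ge 0$; therefore $\tl{\varphi^{m}}=|t|$ and $\tl{\varphi}=|t|/m\in\mathbb{Q}$.
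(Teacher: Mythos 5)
Your overall scheme (a Bowditch-style periodic-axis argument driven by existence plus local finiteness of a preferred family of geodesics) is the right kind of strategy, but the proposal has a genuine gap exactly at the step you yourself flag: the finiteness of tight geodesics in \(\NC(S)\) is not proved, only deferred to ``adapting Bowditch or Webb,'' and the premise on which you base that hope is false. The inclusion \(\NC(S)\hookrightarrow\C(S)\) is \emph{not} a uniform quasi-isometric embedding; for punctured surfaces it is highly distorted (a partial pseudo-Anosov supported on a proper full-genus witness has unbounded \(\NC\)-orbits but bounded \(\C\)-orbits), so a geodesic of \(\NC(S)\) need not be a quasi-geodesic of \(\C(S)\) at all, and no bounded geodesic image theorem for \(\NC(S)\)-geodesics is available by transfer. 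In addition, your existence step is problematic as stated: if \(c_{i-1},c_{i+1}\) fill a full-genus subsurface with only separating boundary (e.g.\ two curves filling a genus-two one-holed subsurface of a closed genus-three surface), then \emph{no} boundary component of a regular neighbourhood of \(c_{i-1}\cup c_{i+1}\) is both essential and nonseparating, so your nonseparating tightening of that vertex does not exist; any fix that allows the middle vertex to be merely some nonseparating curve in the complement destroys the rigidity on which finiteness would rest. Since local finiteness is precisely what makes the limiting/pigeonhole argument work, the proof is incomplete at its technical heart.

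The paper circumvents both issues rather than attacking them head on. It first reduces, via Nielsen--Thurston and Masur--Minsky, to the case of a pseudo-Anosov \(\psi\) on a full-genus witness \(Z\), and works in the surviving curve graph \(\Csurv(Z)\) rather than in \(\NC(S)\). Instead of tightness it uses \((\lambda^+,C)\)-like geodesics: geodesics all of whose vertices have subsurface projections uniformly close to those of the attracting lamination \(\lambda^+\). Their existence between iterates \(\psi^{-n}(a_0),\psi^{n}(a_0)\) is obtained by a replacement scheme using subsurfaces \(\Span{a,b}\) (here the key quasi-geodesic estimates in \(\C\) are proved for these specific geodesics using the pseudo-Anosov orbit, which is exactly where your ``uniform quasi-geodesic'' claim can be salvaged --- but only in this special situation), and local finiteness follows from finiteness in the marking graph (or Watanabe's theorem), not from a tightness combinatorics in \(\NC\). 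Only at the very end are separating surviving vertices swapped for nonseparating ones and the invariant geodesic pushed into \(\NC(S)\) via the isometric embedding \(\NC(Z)\hookrightarrow\NC(S)\). To complete your proof you would need either to supply a genuinely new finiteness theorem for tight geodesics in \(\NC(S)\), or to reroute through a surviving/witness reduction of this kind.
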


Rationality of stable translation lengths holds also for mapping classes acting on the curve graph, by work of Bowditch \cite{bowditch_tight}, or on the pants graph, as shown by Irmer \cite{irmer_stable}. On the other hand, note that the inclusion \(\NC(S)\hookrightarrow \C(S)\) is highly distorted, so we cannot use the result of Bowditch to deduce rationality of stable translation lengths on the nonseparating curve graph.

Finally, we show that for a homeomorphisms which is pseudo-Anosov relative to a finite set of points, the stable translation length is the same as that of the mapping class relative to the angle-\(\pi\) singularities:

\begin{prop}\label{prop:pA-is-rational}
Let \(f\in\Homeo(S)\) be a pseudo-Anosov homeomorphism relative to a finite set of points. Let \(P_\pi\) be the set of angle-\(\pi\) singularities. Then
\[\tl{f}=\tl{[f]_{P_\pi}}.\]
In particular, \(\tl{f}\in \mathbb{Q}\).
\end{prop}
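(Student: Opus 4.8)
The plan is to obtain the two inequalities from different sources: the bound $\tl f\ge\tl{[f]_{P_\pi}}$ from Theorem~\ref{thm:dense}, and the bound $\tl f\le\tl{[f]_{P_\pi}}$ from an explicit $f$-invariant path in $\NCdagger(S)$ built out of the flat geometry of the pseudo-Anosov structure. Recall the setup: $f$ preserves a transverse pair $\mathcal F^u,\mathcal F^s$ of measured foliations of $S$ with dilatation $\lambda>1$, whose singularities have at least three prongs away from the finite invariant set $P$ (which contains $P_\pi$) and at least one prong on $P$; the two transverse measures combine into a flat metric $\rho$ on $S$ with a cone point of angle $k\pi$ at each $k$-pronged singularity, so that $P_\pi$ is exactly the set of cone points of angle $\pi$; moreover $f$ acts on $(S,\rho)$ as an affine map whose linear part is conjugate to $\mathrm{diag}(\lambda,\lambda^{-1})$, and it permutes singularities preserving prong number, so $f(P_\pi)=P_\pi$ and $P_\pi\in\Pf(f)$.

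For the lower bound, the restriction of $f$ to the complement of $P$ is a pseudo-Anosov homeomorphism, hence has a dense set of periodic points (for instance via a Markov partition); together with the points of $P$, which are periodic, the periodic points of $f$ are dense in $S$, so Theorem~\ref{thm:dense} applies and gives $\tl f=\sup_{Q\in\Pf(f)}\tl{[f]_Q}\ge\tl{[f]_{P_\pi}}$.

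For the upper bound, first note that on $S\ssm P_\pi$ the pair $\mathcal F^u,\mathcal F^s$ is still a transverse pair of measured foliations, with at most one-pronged singularities at the punctures and at least three prongs elsewhere, so $[f]_{P_\pi}$ is pseudo-Anosov and in particular acts hyperbolically on $\NC(S\ssm P_\pi)$. By Theorem~\ref{thm:rational} there are $m>0$ and a $[f^m]_{P_\pi}$-invariant geodesic $(\gamma_i)_{i\in\Z}$ of $\NC(S\ssm P_\pi)$ translated by $n_0:=m\,\tl{[f]_{P_\pi}}\in\Z_{>0}$, so that $[f^m]_{P_\pi}(\gamma_i)=\gamma_{i+n_0}$. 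I would then realise each $\gamma_i$ by a representative $c_i\subset S$ that is a local $\rho$-geodesic in its free homotopy class of $S\ssm P_\pi$. Since the cone angle of $\rho$ at a point of $P_\pi$ is $\pi<2\pi$, no $\rho$-geodesic meets $P_\pi$, so each $c_i$ is an embedded simple closed curve of $S$ missing $P_\pi$, nonseparating in $S$ because it is so in $S\ssm P_\pi$; since $\dist_{\NC(S\ssm P_\pi)}(\gamma_i,\gamma_{i+1})=1$ and $\rho$ is nonpositively curved away from $P_\pi$, one can take $c_i$ and $c_{i+1}$ disjoint; and since an affine map of $(S,\rho)$ sends $\rho$-geodesics to $\rho$-geodesics, $f^m(c_i)$ is a $\rho$-geodesic representative of $[f^m]_{P_\pi}(\gamma_i)=\gamma_{i+n_0}$, so after an equivariant choice inside flat cylinders the $c_i$ satisfy $f^m(c_i)=c_{i+n_0}$ for all $i$. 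Then $(c_i)_{i\in\Z}$ is a path in $\NCdagger(S)$ along which $f^m$ acts as translation by $n_0$ edges, whence
\[\tl f=\lim_{j\to\infty}\frac1{jm}\,\dist_{\NCdagger(S)}\!\big(c_0,f^{jm}(c_0)\big)\le\lim_{j\to\infty}\frac{jn_0}{jm}=\frac{n_0}{m}=\tl{[f]_{P_\pi}}.\]
Combining with the lower bound yields $\tl f=\tl{[f]_{P_\pi}}$, which is rational by Theorem~\ref{thm:rational}.

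I expect the main obstacle to be the flat-geometric realisation in the last step, which splits into two careful but routine verifications. First, one must check that an affine (non-isometric) automorphism of $(S,\rho)$ genuinely preserves $\rho$-geodesics: a geodesic through a cone point of angle $k\pi>2\pi$ cuts that angle into two parts each at least $\pi$, and since $\mathrm{diag}(\lambda,\lambda^{-1})$ fixes the $k$ prong directions one checks that the resulting distortion of angles cannot push either part below $\pi$. Second, a curve $\gamma_i$ whose $\rho$-geodesic representative is not unique — i.e.\ which is the core of a flat cylinder — requires an equivariant choice: on each of the finitely many $f^m$-orbits of the $\gamma_i$ one fixes a representative inside the cylinder, compatibly with the (laminar, hence simultaneously realisable) representatives of the neighbouring curves, and propagates it by $f^m$. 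One should also record that pseudo-Anosov mapping classes act hyperbolically on the nonseparating curve graph of a surface of genus at least one, which is what licenses the use of Theorem~\ref{thm:rational}. Alternatively, the upper bound could be extracted by comparing $\dist_{\NCdagger(S)}(c_0,f^{jm}(c_0))$ with $\dist_{\NC(S\ssm P_\pi)}(\gamma_0,\gamma_{jn_0})$ directly via the local approximation Lemma~\ref{lem:local-approximation}, up to an additive constant that washes out in the limit.
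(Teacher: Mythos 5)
Your lower bound is fine (in fact you do not need Theorem~\ref{thm:dense}: Remark~\ref{rmk:inequality} already gives \(\tl{f}\geq\tl{[f]_{P_\pi}}\)), and using Theorem~\ref{thm:rational} to produce an \([f^m]_{P_\pi}\)-invariant geodesic is a legitimate idea. The gap is in the flat-geometric realisation that carries the upper bound. The statement ``the cone angle at \(P_\pi\) is \(\pi<2\pi\), so no \(\rho\)-geodesic meets \(P_\pi\)'' is true for honest locally geodesic closed curves, but what you need is that the length-minimiser \emph{in the homotopy class rel \(P_\pi\)} exists as an embedded closed local geodesic missing \(P_\pi\). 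That is not automatic: minimising sequences rel the punctures can degenerate onto unions of saddle connections that do touch the angle-\(\pi\) points (the model case is a curve encircling two one-pronged singularities, which collapses onto the doubled saddle connection joining them -- that example is separating, so you would need an argument that nonseparating classes cannot degenerate this way, and none is given). Even when the representative is an honest closed geodesic it may pass through cone points of angle \(\geq 3\pi\), hence need be neither embedded nor disjoint from the representative of the adjacent vertex: two classes with zero intersection number rel \(P_\pi\) can have flat representatives meeting at a common singularity. Once you perturb to fix embeddedness and disjointness, the identity \(f^m(c_i)=c_{i+n_0}\) -- the very thing that makes \((c_i)_i\) an \(f^m\)-translated path in \(\NCdagger(S)\) -- is no longer automatic, and arranging the choices to be simultaneously pairwise compatible along the whole bi-infinite geodesic and \(f^m\)-equivariant (also in the cylinder case) is the real difficulty; the two ``routine verifications'' you single out (affine maps preserve geodesics, equivariant choices in cylinders) are not where the problem lies. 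As written, the inequality \(\tl{f}\leq\tl{[f]_{P_\pi}}\) is therefore not established.

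For comparison, the paper avoids any global realisation of a geodesic. It constructs a single nonseparating curve \(\alpha\) made of one stable and one unstable segment avoiding the singularities, with corner at a fixed point, and for each \(n\) a curve \(\alpha_n\) of the same type that is disjoint from \(f^n(\alpha)\), homotopic to it rel \(P_\pi\), and in minimal position with \(\alpha\) rel \(P_\pi\) because any bigon bounded by a stable and an unstable segment must contain an angle-\(\pi\) singularity (Gauss--Bonnet). Lemma~\ref{lem:local-approximation} then gives \(|\distdagger(\alpha,f^n(\alpha))-\dist_{P_\pi}([\alpha],[f^n(\alpha)])|\leq 1\) for all \(n\), which yields both inequalities at once; Theorem~\ref{thm:rational} is only invoked for the rationality statement. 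If you want to salvage your route, you would have to prove the missing flat-geometry facts for nonseparating classes (non-degeneration, embeddedness, simultaneous equivariant disjoint realisation), or else replace the flat geodesics by curves built from stable and unstable segments as in the paper.
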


As a consequence, the supremum in Corollary \ref{cor:torus-case} is sometimes realized.

\section*{Acknowledgements}

The first author was partially supported by the ANR grants MAGIC (ANR-23-TERC-0007) and GALS (ANR-23-CE40-0001). The first two authors are grateful to the SPP Main Conference (Leipzig, March 31 to April 4, 2025), where part of the work was carried out.

\section{Preliminaries}\label{sec:prelim}

Throughout this article, surfaces are assumed to be connected, orientable and of finite type, and subsurfaces are incompressible, nonperipheral and connected. Unless otherwise stated, surfaces will be assumed to have no boundary, one notable exception being subsurfaces of a given surface. The \emph{complexity} \(\xi(S)\) of a (sub)surface \(S\) is \(3g+n+b\), where \(g\) is the genus of \(S\), \(n\) its number of punctures and \(b\) its number of boundary components.

Curves are assumed to be simple and closed. Given a curve \(\alpha\) on a surface \(S\), we denote by \([\alpha]\) its free homotopy class. Unless otherwise stated, curves will also be assumed to be \emph{essential} (i.e.\ not bounding a disk with at most one puncture) and \emph{nonperipheral} (i.e.\ not homotopic to a boundary component).

Two curves \(\alpha\) and \(\beta\) are said to be \emph{in minimal position} if \(|\alpha\cap\beta|=\min\{|\alpha'\cap\beta'|\st \alpha'\in [\alpha], \beta'\in [\beta]\}.\)

Given two (distinct) homotopy classes of curves \(a\) and \(b\), the subsurface \emph{filled by \(a\) and \(b\)}, denoted \(\Span{a,b}\), is the homotopy class of a subsurface obtained as follows: choose representatives \(\alpha\in a\) and \(\beta\in b\) in minimal position, take a regular neighborhood \(N\) and fill in every disk with at most one puncture bounded by a component of \(\partial N\). Note that if $c,d$ are curves in $\Span{a,b}$ then $\Span{c,d} \subset \Span{a,b}$ up to homotopy.

An \emph{arc} on a surface with boundary is the image of \([0,1]\) under an embedding, sending the endpoints of the interval to the boundary of the surface. Arcs are assumed to be \emph{essential}, i.e.\ not homotopic to an arc in the boundary.

\subsection*{Gromov hyperbolic metric spaces}
A geodesic metric space \(X\) is \emph{\(\delta\)-hyperbolic} if all its geodesic triangles are \(\delta\)-thin (i.e.\ each side is contained in the \(\delta\)-neighborhood of the other two sides). Given such a space and an isometry \(f\), the \emph{stable translation length} \(\tl{f}_X\) is given by
\[\tl{f}_X:=\lim_{n\to\infty}\frac{\dist(x,f^n(x))}{n},\]
where \(x\) is a point in \(X\). It is well known that the limit exists and does not depend on the point \(x\). The isometry is said to be \emph{hyperbolic} if \(\tl{f}_X>0\). If \(\tl{f}_X=0\), it is \emph{elliptic} if the orbits are bounded and \emph{parabolic} otherwise.

A sequence \((x_n)_{n\in\Z}\subset X\) is a \emph{\(K\)-quasigeodesic} if for every \(n,m\in\Z\) we have
\[\frac{1}{K}|n-m|-K\leq \dist(x_n,x_m)\leq K|n-m|+K.\]
It is a \emph{\(N\)-local \(K\)-quasigeodesic} if the above inequalities hold whenever \(|n-m|<N\).

We will use two well-known results about \(\delta\)-hyperbolic
spaces. The first one allows to find quasi-axes of a definite quality
(see e.g. \cite[Lemma 3.6]{hlr_rotation} for this version):
\begin{lemma}\label{lem:quasigeods}
Let \(X\) be a \(\delta\)-hyperbolic space.
\begin{enumerate}
    \item There are \(K=K(\delta), L=L(\delta)\) such that if \(f\) is an isometry of \(X\) with \(\tl{f}_X\geq L\) and \(x\in X\) minimizes \(\dist(x,f(x))\), then \[\bigcup_{n\in\Z}f^n([x,f(x)])\]
    is a \(K\)-quasigeodesic, for any choice of geodesic segment \([x,f(x)]\).
    \item Let \(K'>0\) and let \(f\) be an isometry of \(X\). If \((f^n(x)){n\in\Z}\) is a \(K'\)-quasigeodesic, then for every \(n,k\geq 1\)
    \[d(x,f^{nk}(x))\geq n(\dist(x,f^k(x))-2M),\]
    where \(M=M(K',\delta)\) is the Morse constant for \(K'\)-quasigeodesics. In particular, for every \(k\geq 1\)
    \[\tl{f}_X\geq \frac{d(x,f^k(x))}{k}-\frac{2M}{k}.\]
\end{enumerate}
\end{lemma}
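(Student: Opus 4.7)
The plan is to prove the two parts independently, each via a standard $\delta$-hyperbolicity tool.

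For (1), set $\ell=\dist(x,f(x))$ and suppose $\ell\geq L$ for a threshold $L=L(\delta)$ to be chosen. The strategy is to show that three consecutive orbit points $f^{n-1}(x),f^n(x),f^{n+1}(x)$ satisfy the ``no backtrack'' inequality $\dist(f^{n-1}(x),f^{n+1}(x))\geq 2\ell-C$ for some $C=C(\delta)$, which makes the piecewise geodesic $\bigcup_{n\in\Z}f^n([x,f(x)])$ a local quasigeodesic, and then to invoke the standard local-to-global principle for quasigeodesics in $\delta$-hyperbolic spaces to promote it to a global $K(\delta)$-quasigeodesic. To prove the no-backtrack inequality, I would take a midpoint $m$ of $[f^{-1}(x),f(x)]$, use $\delta$-thinness of the triangle with vertices $f^{-1}(x),x,f(x)$ to bound $\dist(m,x)$ in terms of $\ell$ and $\delta$, and combine this with the minimality inequality $\dist(m,f(m))\geq\ell$ (valid since $x$ minimizes the displacement and $f$ is an isometry) and the triangle inequality to force the lower bound, provided $L$ is large enough compared to $\delta$.

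For (2), I would apply the Morse lemma: since $(f^n(x))_{n\in\Z}$ is a $K'$-quasigeodesic, any geodesic $\gamma$ from $x$ to $f^{nk}(x)$ lies at Hausdorff distance at most $M=M(K',\delta)$ from the orbit. For each $j\in\{0,\ldots,n\}$, let $y_j\in\gamma$ be a nearest-point projection of $f^{jk}(x)$, so $y_0=x$, $y_n=f^{nk}(x)$, and $\dist(f^{jk}(x),y_j)\leq M$. By coarse monotonicity of nearest-point projections in $\delta$-hyperbolic spaces, the $y_j$ appear along $\gamma$ in the correct order, and since $f^{jk}$ is an isometry, the triangle inequality gives $\dist(y_j,y_{j+1})\geq\dist(f^{jk}(x),f^{(j+1)k}(x))-2M=\dist(x,f^k(x))-2M$. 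Summing over $j$ along $\gamma$ yields the claimed bound $\dist(x,f^{nk}(x))\geq n(\dist(x,f^k(x))-2M)$, and dividing by $nk$ and letting $n\to\infty$ gives the bound on $\tl{f}_X$.

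The main obstacle I expect is the quantitative form of (1): tracking the dependence of $C$ and hence $K$ on $\delta$ through the midpoint argument, and then threading that bookkeeping through the local-to-global lemma, requires care to obtain $K$ and $L$ depending only on $\delta$. Part (2) is essentially routine once Morse's lemma and the coarse monotonicity of projections are invoked.
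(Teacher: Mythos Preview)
The paper does not give a proof of this lemma; it is stated as a well-known fact with a pointer to \cite[Lemma~3.6]{hlr_rotation}. Your overall plan---a no-backtrack estimate plus local-to-global for~(1), and the Morse lemma for~(2)---is the standard route.

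There is, however, a gap in your sketch for~(1). With $m$ the midpoint of $[f^{-1}(x),f(x)]$ and $D=\dist(f^{-1}(x),f(x))$, thinness of the triangle $f^{-1}(x),x,f(x)$ yields only $\dist(m,x)\leq \ell-D/2+O(\delta)$; since $f(m)$ is the midpoint of $[x,f^2(x)]$ and $\dist(x,f^2(x))=D$, feeding this into $\ell\leq \dist(m,f(m))\leq \dist(m,x)+\dist(x,f(m))\leq (\ell-D/2+O(\delta))+D/2$ gives the vacuous $\ell\leq\ell+O(\delta)$, with no lower bound on $D$. A choice that does work: look instead at the triangle $x,f(x),f^2(x)$, where the sides $[f(x),x]$ and $[f(x),f^2(x)]=f([x,f(x)])$ fellow-travel for length $(x\cdot f^2(x))_{f(x)}=\ell-D/2$. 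Taking $p$ on $[x,f(x)]$ at distance $t\leq \ell-D/2$ from $f(x)$ and the corresponding point $f(p')$ on the other side (with $p'\in[x,f(x)]$ at distance $t$ from $x$), one has $\dist(p,f(p'))\leq O(\delta)$ and $\dist(p,p')=|\ell-2t|$; now minimality applied to $p'$ forces $\ell\leq|\ell-2t|+O(\delta)$, and optimizing over $t$ gives $D\geq 2\ell-O(\delta)$ once $\ell$ is large. After this fix your local-to-global step goes through. For~(2), note that nearest-point projections are only \emph{coarsely} monotone, so asserting that the $y_j$ appear in the correct order is not automatic and a naive estimate costs an extra additive constant per step. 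You can bypass this cleanly by inducting on $n$: project only $f^{(n-1)k}(x)$ to a point $y$ on the geodesic $[x,f^{nk}(x)]$ and use
\[
\dist(x,f^{nk}(x))=\dist(x,y)+\dist(y,f^{nk}(x))\geq\bigl(\dist(x,f^{(n-1)k}(x))-M\bigr)+\bigl(\dist(x,f^k(x))-M\bigr)
\]
together with the inductive hypothesis.
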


The second result is a well-known local-to-global property (see e.g.\ \cite[Théorème 1.4, Chapitre 3]{cdp_geometrie}):

\begin{lemma}\label{lem:local-to-global}
Let \(X\) be a \(\delta\)-hyperbolic space. For every \(K>0\) there are \(N=N(K),K'=K'(K)>0\) such that if \(\gamma\) is an \(N\)-local \(K\)-quasigeodesic, then \(\gamma\) is a \(K'\)-quasigeodesic.
\end{lemma}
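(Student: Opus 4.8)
The plan is to reduce to continuous paths and then run the classical ``exponential divergence of geodesics'' argument, approximating \(\gamma\) by a broken geodesic with long, taut legs. First I would replace the sequence \((x_n)_{n\in\Z}\) by the continuous path \(\hat\gamma\) obtained by concatenating geodesic segments \([x_n,x_{n+1}]\); suitably parametrized, \(\hat\gamma\) is an \(N_1\)-local \(K_1\)-quasigeodesic with \(N_1,K_1\) depending only on \(K\), and a global quasigeodesic estimate for \(\hat\gamma\) yields one for \((x_n)\), so I may assume \(\gamma\colon I\to X\) is continuous. The single input from hyperbolicity is the stability of genuine quasigeodesics (the Morse lemma): any subsegment \(\gamma|_{[s,t]}\) with \(|s-t|\le N\) is a genuine \(K\)-quasigeodesic, hence lies within Hausdorff distance \(M_0=M_0(K,\delta)\) of any geodesic joining its endpoints --- and, crucially, \(M_0\) does not depend on \(N\) (this is what makes local-to-global strictly stronger than the Morse lemma).

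Next I would subdivide \(I\) into consecutive subintervals of length \(N/2\), with breakpoints \(\dots<t_{i-1}<t_i<t_{i+1}<\dots\), and form the broken geodesic \(\tau\) obtained by concatenating the legs \(\tau_i=[\gamma(t_i),\gamma(t_{i+1})]\). By the previous step, \(\gamma\) and \(\tau\) lie within Hausdorff distance \(M_0\) of each other, and each leg has length at least \(\tfrac{1}{K}\cdot\tfrac{N}{2}-K\), which for \(N\ge N(K,\delta)\) is far larger than \(M_0\). I would then check that \(\tau\) is \emph{taut}: no breakpoint \(\gamma(t_i)\) admits points on the incoming and outgoing legs at distance \(r_0\) from \(\gamma(t_i)\) along each leg that are within \(10M_0\) of one another, once \(r_0\ge CKM_0\) for a suitable \(C\). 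Indeed such a configuration would, using the \(M_0\)-closeness of \(\gamma\) to the two legs, produce two points of \(\gamma\) at parameter distance at least \(r_0/(2K)\) but space distance at most \(12M_0\), contradicting the quasigeodesic lower bound on the length-\(N\) window around \(t_i\); taking \(r_0\) of order \(N/K\) makes the contradiction genuine for \(N\ge N(K,\delta)\). Thus \(\tau\) is a broken geodesic with legs of length \(\ge L_0\), taut away from its breakpoints at scale \(10M_0\), and with \(L_0\gg M_0\).

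The heart of the proof is then the classical fact that a taut broken geodesic with long legs is a global quasigeodesic --- the same divergence argument that shows \(c\)-local geodesics with \(c>8\delta\) are quasigeodesics. Given a subsegment of \(\tau\) with endpoints \(a,b\) and a geodesic \([a,b]\), I would pick a point \(p\) on the subsegment maximizing \(D:=\dist(p,[a,b])\) and assume \(D\) is large. Moving along \(\tau\) from \(p\) in each direction and repeatedly using \(\delta\)-thinness of the triangles formed with \([a,b]\), one shows that after distance \(O(D)\) one is within \(O(\delta)\) of \([a,b]\); hence on each side of \(p\) the path \(\tau\) must turn around at scale \(O(D)\), which is impossible on legs of length \(L_0\) while remaining taut unless \(D\le D_1(K,\delta)\). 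This bounds the Hausdorff distance between every subsegment of \(\tau\) and the geodesic through its endpoints, and since the legs of \(\tau\) have length comparable to \(N\), a standard parameter-versus-distance comparison shows that \(\tau\) --- and hence \(\gamma\), which lies within \(M_0\) of \(\tau\) with comparable parametrization --- is a \(K'\)-quasigeodesic with \(K'=K'(K,\delta)\). The upper bound in the definition of a quasigeodesic is immediate here (subdivide \([s,t]\) into windows of length \(<N\) and add the bounds \(\dist(\gamma(u),\gamma(v))\le K|u-v|+K\)); the substance is the lower bound, furnished by the argument above.

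The main obstacle is this last divergence argument together with the bookkeeping that makes \(N=N(K)\) (with \(\delta\) fixed) work: one needs \(N\) large enough that the tautness scale \(L_0\), which is linear in \(N\), dominates both the stability constant \(M_0\) and the turn-around constant \(D_1\), which depend only on \(K\) and \(\delta\); threading the chain of implied constants consistently through the taut-broken-geodesic construction and the divergence estimate is the delicate part.
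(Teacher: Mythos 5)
The paper gives no proof of this lemma at all: it is quoted as a standard local-to-global property with a citation to Coornaert--Delzant--Papadopoulos, so there is no in-paper argument to compare with. Your sketch is essentially the classical proof from that literature: pass to the broken geodesic whose legs span parameter windows of length about \(N/2\), use stability of (genuine) quasigeodesics inside each window with a Morse constant \(M_0=M_0(K,\delta)\) independent of \(N\), extract a tautness/no-backtracking condition at the breakpoints from the local lower quasigeodesic bound, and globalize by hyperbolicity; all constants can indeed be threaded so that \(N\) and \(K'\) depend only on \(K\) and \(\delta\). So the approach is sound and is the expected one.

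One step is glossed too quickly, though it can be repaired by standard means. Knowing that every subsegment of \(\tau\) lies within Hausdorff distance \(D_1\) of the geodesic joining its endpoints does not, by a mere ``parameter-versus-distance comparison,'' yield the lower quasigeodesic bound: a path can wiggle inside a bounded neighborhood of its chord, so one still needs a monotone-progress estimate. The cleanest repair (which in fact lets you skip the maximal-point divergence argument entirely) is to observe that tautness bounds the Gromov product of each breakpoint with respect to its two neighbours, \((b_{k-1}\mid b_{k+1})_{b_k}\le D_1\), and then to show by induction, using \(\delta\)-hyperbolicity and \(L_0>2D_1+2\delta\), that \((b_0\mid b_{k+1})_{b_k}\le D_1+\delta\) for all \(k\); this gives \(d(b_0,b_n)\ge n\,(L_0-2D_1-2\delta)\), and only then does your parameter comparison convert this into the \(K'\)-quasigeodesic bound for \(\gamma\). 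Two cosmetic points: in the reduction to continuous paths the local scale \(N_1\) necessarily depends on \(N\) as well as \(K\) (it must grow with \(N\)), and in the tautness step the two auxiliary parameters may be up to \(N\) apart, so either subdivide slightly finer than \(N/2\) or note that the strict inequality \(|n-m|<N\) in the definition is harmless. None of this changes the structure of your argument.
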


\subsection*{(Fine) curve graphs}

Suppose that \(S\) is a surface of genus at least one. The \emph{fine curve graph} \(\Cdagger(S)\) of a surface \(S\) is the graph whose vertices correspond to curves and where two vertices are joined by an edge if the corresponding curves:
\begin{itemize}
\item are disjoint, if the genus of \(S\) is at least two,
\item intersect at most once, if the genus is one.
%\item intersect at most twice, if the genus is zero.
\end{itemize}
The \emph{curve graph} \(\C(S)\) is the graph whose vertices correspond to homotopy classes of curves and two vertices are joined by an edge if there are curves in the classes which:
\begin{itemize}
\item are disjoint, if the genus of \(S\) is at least two,
\item intersect at most once, if the genus is one.
%\item intersect at most twice, if the genus is zero.
\end{itemize}
The subgraph of \(\Cdagger(S)\) whose vertices are nonseparating curves is the \emph{nonseparating fine curve graph} \(\NCdagger(S)\), and the subgraph of \(\C(S)\) whose vertices are classes of nonseparating curves is the \emph{nonseparating curve graph} \(\NC(S)\). Note that \(\Cdagger(S)\), \(\NCdagger(S)\), \(\C(S)\) and \(\NC(S)\) are \(\delta\)-hyperbolic, and the constant \(\delta\) can be chosen independently on the surface (see \cite{bhw_quasi, mm_geometryI,aougab_uniform,bowditch_uniform,crs_uniform,hpw_unicorns}). 
The homeomorphism group \(\Homeo(S)\) acts on \(\Cdagger(S)\) by isometries, and the \emph{mapping class group} \(\mcg(S)\) (the group of orientation-preserving homeomorphisms, up to homotopy) acts on \(\C(S)\) by isometries.

Recall from the introduction that given a homeomorphism \(f\) of a surface \(S\), we denote by \(\tl{f}\) its stable translation length on \(\NCdagger(S)\). We also denote by \(\Pf(f)\) the set
\[\Pf(f):=\{P\subset S\st f(P)=P, |P|<\infty\}.\]

Given \(P\in\Pf(f)\), \([f]_P\) denotes the mapping class \([f|_{S\ssm P}]\in\mcg(S\ssm P)\) and \(\tl{[f]_{P}}\) its stable translation length on \(\NC(S\ssm P)\).

We denote by \(\distdagger_S\) the distance in \(\NCdagger(S)\) and by \(\dist_{S\ssm P}\) the distance in \(\NC(S\ssm P)\). If the surface \(S\) is clear from the context, we will simply write \(\distdagger\) and \(\dist_P\).

We recall here the local approximation result \cite[Lemma 3.4]{bhw_quasi} (stated here for the nonseparating fine curve graph), which we will use in our proofs:

\begin{lemma}[\cite{bhw_quasi}]\label{lem:local-approximation} 
Let \(\alpha\) and \(\beta\) be distinct transverse nonseparating curves on \(S\). Suppose  that they are in minimal position in \(S\ssm P\), where \(P\subset S\) is a finite set of points disjoint from \(\alpha\) and \(\beta\). Then
\[\distdagger(\alpha,\beta)=\dist_P([\alpha],[\beta]).\]
\end{lemma}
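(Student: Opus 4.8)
The plan is to adapt Bowditch's proof that pseudo-Anosov mapping classes have rational stable translation length on the curve graph \cite{bowditch_tight} to the nonseparating curve graph \(\NC(S)\). The two assertions of the theorem are equivalent up to routine bookkeeping: if \(\varphi^{m}\) preserves a bi-infinite geodesic \(\gamma\) of \(\NC(S)\), then, since \(\varphi\) (hence \(\varphi^{m}\)) is hyperbolic, \(\varphi^{m}\) translates the vertex sequence of \(\gamma\) by some nonzero integer \(\ell\) rather than reflecting it, so \(\tl{\varphi^{m}}=|\ell|\) and \(\tl{\varphi}=|\ell|/m\in\mathbb{Q}\). Moreover \(\tl{\varphi^{k}}=k\,\tl{\varphi}\), so one is free to replace \(\varphi\) by a power; in particular one may assume \(\tl{\varphi}\geq L(\delta)\), so that by Lemma \ref{lem:quasigeods}(1) the \(\varphi\)-translates of a shortest-displacement segment \([x,\varphi x]\) form a \(K(\delta)\)-quasigeodesic, whence \(\varphi\) has two well-defined endpoints \(\xi^{\pm}\in\partial\NC(S)\) and a quasi-axis of uniformly bounded quality. (In the lowest-complexity genus-one cases \(\NC(S)\) is the Farey graph and \cite{bowditch_tight} applies directly; the substance is the general case, where the inclusion \(\NC(S)\hookrightarrow\C(S)\) is distorted.)

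The heart of the argument is to build a theory of \emph{tight geodesics} for \(\NC(S)\), in the style of Masur--Minsky and Bowditch: a class of honest geodesics \((v_{0},\dots,v_{n})\), the tight ones, with (i) every pair of vertices joined by at least one tight geodesic, obtained from an arbitrary geodesic by a terminating ``tightening'' that replaces each interior \(v_{i}\) by a curve canonically associated to \((v_{i-1},v_{i+1})\); (ii) tightness preserved by \(\mcg(S)\), automatic once the canonical curve is given by a topological recipe; (iii) only finitely many tight geodesics between any two given vertices. Granting (i)--(iii), I would finish as in \cite{bowditch_tight}: for each \(n\) pick a tight geodesic \(g_{n}\) from \(\varphi^{-n}x\) to \(\varphi^{n}x\); all the \(g_{n}\) fellow-travel the quasi-axis, hence pass within a uniform distance of \(x\); and, using the finiteness (iii) and the fact that \(\varphi\) carries tight geodesics to tight geodesics, a pigeonhole/compactness argument produces a bi-infinite tight geodesic from \(\xi^{-}\) to \(\xi^{+}\) invariant under \(\varphi^{m}\) for some \(m>0\). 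Lemma \ref{lem:quasigeods} and Lemma \ref{lem:local-to-global} supply the hyperbolic-geometry input, and I expect this step to be essentially a transcription of Bowditch's once the tight-geodesic package is available.

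The main obstacle is that the naive transcription of tightness fails in \(\NC(S)\). Bowditch takes \(v_{i}\) to be a boundary component of the subsurface \(\Span{v_{i-1},v_{i+1}}\) filled by its neighbours, but such a component is frequently \emph{separating} in \(S\) — for instance when \(v_{i-1}\) and \(v_{i+1}\) fill a one-holed torus inside a closed surface of genus at least two — so the tightened path can leave \(\NC(S)\); worse, the middle vertex of a genuine \(\NC(S)\)-geodesic between \(v_{i-1}\) and \(v_{i+1}\) may be forced to lie \emph{outside} \(\Span{v_{i-1},v_{i+1}}\), in a complementary component, so even the standard property ``tight geodesics stay in the span of their endpoints'' fails verbatim. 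The notion of canonical curve therefore has to be redesigned to keep the procedure inside the nonseparating curve graph; one natural option is to work with a nonseparating-aware completion of \(\Span{v_{i-1},v_{i+1}}\) (e.g. a suitable saturation \(\sat{\Span{v_{i-1},v_{i+1}}}\)) and choose \(v_{i}\) among the curves it carries that remain nonseparating in \(S\), by a rule that is \(\mcg(S)\)-equivariant and rigid enough to force finiteness. Re-proving Bowditch's finiteness theorem for this modified class is the real work: the hyperbolic-geometry ingredients (uniform hyperbolicity of \(\NC(S)\), thin triangles, centrality of tight geodesics) carry over, but the combinatorial ``filling'' estimates behind finiteness must be redone for nonseparating configurations. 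As a fallback I would instead use bicorn/unicorn geodesics for \(\NC(S)\) in the spirit of \cite{hpw_unicorns}, which form finitely many \(\mcg(S)\)-equivariant families indexed by intersection data; after extracting from them an honest \(\varphi^{m}\)-periodic geodesic one concludes as above.
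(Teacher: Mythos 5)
Your proposal does not address the statement it is supposed to prove. The statement is the local approximation lemma: for two transverse nonseparating curves \(\alpha,\beta\) in minimal position relative to a finite set \(P\) disjoint from them, the distance \(\distdagger(\alpha,\beta)\) in the fine nonseparating curve graph equals the distance \(\dist_P([\alpha],[\beta])\) of their homotopy classes in \(\NC(S\ssm P)\). What you have written is instead a proof sketch of the rationality theorem for stable translation lengths of pseudo-Anosov mapping classes on \(\NC(S)\) (Theorem \ref{thm:rational} / Theorem \ref{thm:rational-surviving} in the paper), built around adapting Bowditch's tight-geodesic machinery. Nothing in your argument concerns the comparison between the fine graph metric and the metric on the punctured-surface curve graph, which is the entire content of the lemma.

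For the record, the lemma you were asked about is not proved in this paper at all: it is quoted from \cite{bhw_quasi} (Lemma 3.4 there), where the proof compares the two metrics directly. The inequality \(\distdagger(\alpha,\beta)\geq\dist_P([\alpha],[\beta])\) is the easy direction, since passing to homotopy classes rel \(P\) is distance non-increasing (curves disjoint, or meeting once in genus one, in \(S\) remain so after puncturing along \(P\) provided one keeps track of minimal position); the reverse inequality requires realizing a geodesic in \(\NC(S\ssm P)\) by honest curves in \(S\) that are pairwise disjoint (or meet at most once), which is where the minimal-position hypothesis is used. If you want to prove this statement, that is the argument you need to supply; the tight-geodesic and rationality material belongs to a different theorem.
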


We will also recall here how subsurface projections are defined (see also \cite{mm_geometryII}). Let \(Z \neq S\) be a subsurface of complexity at least \(4\). The \emph{arc and curve graph} \(\AC(Z)\) of \(Z\) has homotopy classes of curves and arcs as vertices (where the homotopy fixes each boundary component setwise), and two vertices are adjacent if there are disjoint representatives of the corresponding classes. The subsurface projection \(\pi_Z\) is the map \(\pi_Z:\C(S)\to\mathcal{P}(\AC(Z))\), defined as follows:
\begin{itemize}
\item if \(a\in \C(S)\) can be realized disjointly from \(Z\), \(\pi_Z(a)=\emptyset\);
\item if \(a\) has essential intersection with \(Z\), let \(\alpha\) be a representative in minimal position with the boundary of \(Z\). Then \(\pi_Z(a)\) is the set of all homotopy classes of (essential) arcs and curves in \(\alpha\cap Z\).
\end{itemize}

If \(Z\) is an annulus, we look instead at the annular cover \(\tilde{Z}\) and its natural compactification \(\hat{Z}\) obtained by adding circles at infinity. Define \(\AC(Z)\) to be the graph whose vertices are arcs connecting the two boundary components of \(\hat{Z}\), modulo homotopy fixing the boundary pointwise, and where two vertices are connected by an edge if there are representative arcs with disjoint interior. We then define the projection \(\pi_Z:\C(S)\to\mathcal{P}(\AC(Z))\) as follows:
\begin{itemize}
\item if \(a\) has zero intersection with the core of \(Z\), then \(\pi_Z(a)=\emptyset\);
\item if \(a\) intersects \(Z\) essentially, let \(\alpha\) be a representative in minimal position with \(\Z\) and define \(\pi_Z(a)\) to be the set of all homotopy classes of arcs in the lift of \(\alpha\) which join the two boundary components of \(\hat{Z}\).
\end{itemize}

Similarly, we can define the subsurface projection of a lamination \(\lambda\): given a subsurface \(Z\) of complexity at least four, we can put \(\lambda\) in minimal position with respect to \(Z\) and, if it intersects \(Z\), define \(\pi_Z(\lambda)\) as the set of all homotopy classes of curves and arcs in \(\lambda\cap Z\), and \(\pi_Z(\lambda)=\emptyset\) otherwise. We proceed similarly if \(Z\) is an annulus.

Note that in every case, if \(\pi_Z(a)\neq\emptyset\) (for a curve or lamination \(a\)), the arcs and curves in the projection are pairwise disjoint, so the diameter of the projection is at most one.

Given two non-empty subsets \(A\) and \(B\) of \(\AC(Z)\), \(\dist_Z(A,B)\) denotes the Hausdorff distance between \(A\) and \(B\) in the arc and curve graph of \(Z\).

Masur and Minsky proved in \cite{mm_geometryII} the following theorem, called \emph{Bounded Geodesic Image Theorem}:

\begin{thm}[Bounded Geodesic Image Theorem]\label{thm:BGIT}
Let \(S\) be a surface of complexity at least three. Then there is a constant \(B\) depending only on \(S\) such that the following holds: for every subsurface \(Z\) which is either an annulus or has complexity at least four, if \(c_0,\dots,c_n\) is a geodesic such that \(\pi_Z(c_i)\neq\emptyset\) for every \(i\), then \(\diam \bigcup_{i=0}^n\pi_Z(c_i)\leq B\).
\end{thm}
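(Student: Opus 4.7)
The plan is to deduce the Bounded Geodesic Image Theorem from the hyperbolicity of $\C(S)$ combined with a coarse Lipschitz property of subsurface projection, via a comparison with a suitable \emph{reference path} in $\C(S)$. The starting point is that if two curves $a,b$ are adjacent in $\C(S)$ and both project nontrivially to $Z$, then $d_Z(\pi_Z(a),\pi_Z(b))\leq 2$: one chooses representatives in minimal position with $\partial Z$, so that the pieces of $a$ and $b$ inside $Z$ are disjoint (or meet at most once, in the genus-one case) and the corresponding arcs or curves are adjacent in $\AC(Z)$. Consequently the projections of a geodesic $c_0,\dots,c_n$ with all $\pi_Z(c_i)\neq\emptyset$ trace a path of length at most $2n$ in $\AC(Z)$; the content of the theorem is to replace this with a bound \emph{independent} of $n$.

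The strategy is to argue by contradiction. Assume $\diam\bigcup\pi_Z(c_i)$ is very large, so that two indices $i<j$ satisfy $d_Z(\pi_Z(c_i),\pi_Z(c_j))\geq D$ for some large $D$. I would construct a reference path $\gamma_0,\dots,\gamma_D$ in $\C(S)$ by taking a geodesic $v_0,\dots,v_D$ in $\AC(Z)$ between representatives of the two projections and, for each arc vertex $v_k$, capping it off with an arc in $\partial Z$ to form a closed curve $\gamma_k$ in $S$; if $v_k$ is already a curve, set $\gamma_k=v_k$, while if $Z$ is an annulus, $\gamma_k$ is the closed curve in $S$ corresponding to an arc in the compactified annular cover. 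Consecutive $\gamma_k$ admit disjoint representatives, so this is a path in $\C(S)$ of length $D$, with $\gamma_0$ and $\gamma_D$ lying at bounded $\C(S)$-distance from $c_i$ and $c_j$ respectively.

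The main obstacle --- and the heart of the Masur--Minsky argument --- is showing that the reference path is a uniform quasi-geodesic in $\C(S)$. Granting this, the Morse property of quasi-geodesics in the hyperbolic space $\C(S)$ forces the segment $c_i,\dots,c_j$ to fellow-travel $\gamma_0,\dots,\gamma_D$ within a uniform distance $R$; the projections of the $c_\ell$ then track $v_0,\dots,v_D$ up to bounded error, which after running the constants gives a uniform bound $B=B(S)$ on $D$. Quasi-geodesicity of the reference path itself is established by ruling out efficient shortcuts: any shortcut all of whose vertices project to $Z$ projects, by the Lipschitz property, to a too-short path in $\AC(Z)$, contradicting the fact that $v_0,\dots,v_D$ was a geodesic; shortcuts passing through curves disjoint from $Z$ require a separate local analysis near $\partial Z$ and ultimately lean on the hierarchy machinery (or, more recently, on the unicorn-path approach of Hensel--Przytycki--Webb) for a fully rigorous treatment.
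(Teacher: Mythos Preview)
The paper does not prove this theorem: it is quoted as a result of Masur--Minsky and used as a black box, so there is no proof in the paper to compare your proposal against.

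Your sketch, however, has a genuine gap. The reference path $\gamma_0,\dots,\gamma_D$ you build by capping off arcs $v_k\subset Z$ with subarcs of $\partial Z$ consists of curves lying in $Z$; each $\gamma_k$ can be isotoped off $\partial Z$ and hence sits at distance at most $1$ from some component of $\partial Z$ in $\C(S)$. The entire reference path therefore has $\C(S)$--diameter bounded independently of $D$ --- it is \emph{not} a quasigeodesic of length comparable to $D$, and the Morse-lemma step never gets off the ground. Your proposed verification of quasigeodesicity anticipates exactly this obstruction (``shortcuts passing through curves disjoint from $Z$''), but such shortcuts are not a technicality to be handled by local analysis: a path $\gamma_0,\beta,\beta',\gamma_D$ with $\beta,\beta'$ components of $\partial Z$ has length at most $3$, so these shortcuts genuinely exist and defeat the argument. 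The actual proofs (Masur--Minsky via hierarchies, Webb via tight geodesics, or routes through the Behrstock inequality) run in the opposite direction: rather than lifting an $\AC(Z)$--geodesic into $\C(S)$, they show that if the $Z$--projections of the endpoints of a $\C(S)$--geodesic are far apart, then some vertex of that geodesic must be disjoint from $Z$ and hence have empty projection.
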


We need the following standard corollary, which follows from
Theorem~\ref{thm:BGIT}, the Morse lemma, and the fact that subsurface
projection is Lipschitz along paths where it is defined:
\begin{cor}[Bounded Quasigeodesic Image Theorem]\label{cor:qBGIT}
  Let \(S\) be a surface of complexity at least three. For every $K>0$
  there is a constant \(B\) depending only on \(S\) and $K$ such that
  the following holds: for every subsurface \(Z\) which is either an
  annulus or has complexity at least four, if \(c_0,\dots,c_n\) is a
  $K$--quasigeodesic path such that \(\pi_Z(c_i)\neq\emptyset\) for
  every \(i\), then \(\diam \bigcup_{i=0}^n\pi_Z(c_i)\leq B\).
\end{cor}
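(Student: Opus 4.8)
The plan is to flesh out the three ingredients named after the statement: Theorem~\ref{thm:BGIT}, the Morse lemma for $K$-quasigeodesics in the $\delta$-hyperbolic space $\C(S)$, and the fact that $\pi_Z$ is coarsely Lipschitz along any path on which it is everywhere defined. The one subtle point, which I will isolate, is that a $K$-quasigeodesic \emph{path} on which $\pi_Z$ is everywhere defined need not fellow-travel a \emph{geodesic} on which $\pi_Z$ is everywhere defined, so one has to control where the path runs close to $\partial Z$.

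I would first fix a component $\partial_0$ of $\partial Z$ that is essential and non-peripheral in $S$ — the core curve of $Z$ if $Z$ is an annulus. (If $Z$ has no such boundary component, which forces $\xi(Z)\ge 4$ and $Z$ to be isotopic to $S$ with neighbourhoods of some punctures removed, then every essential curve of $S$ crosses $Z$, so $\pi_Z$ is defined on every vertex of $\C(S)$ and the statement is immediate from Theorem~\ref{thm:BGIT} applied to a geodesic between $c_0$ and $c_n$; so assume $\partial_0$ exists.) I record two facts. \emph{(a)} If $\pi_Z(c)=\emptyset$ then $c$ can be realized disjoint from $Z$, hence from $\partial_0$, so $\dist_{\C(S)}(c,\partial_0)\le 1$; contrapositively, any curve at $\C(S)$-distance $\ge 2$ from $\partial_0$ projects non-trivially to $Z$. \emph{(b)} There is a universal constant $C_0$ (one checks $C_0=4$ suffices) such that if $a,b$ span an edge of $\C(S)$ and $\pi_Z(a),\pi_Z(b)\neq\emptyset$, then $\dist_Z(\pi_Z(a),\pi_Z(b))\le C_0$: realizing $a\cup b\cup\partial Z$ pairwise in minimal position, the essential arcs of $a\cap Z$ meet those of $b\cap Z$ in at most one point in total, and each of the two arc systems has diameter $\le 1$ in $\AC(Z)$. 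Iterating (b), $\pi_Z$ varies by at most $C_0$ per edge along any path of $\C(S)$ all of whose vertices project non-trivially to $Z$.

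Now for the core argument, let $M=M(K,\delta)$ be the Morse constant, so that every geodesic between two vertices of the $K$-quasigeodesic path $\gamma=(c_0,\dots,c_n)$ lies in the $M$-neighbourhood of $\gamma$. Call $c_i$ \emph{marked} if $\dist_{\C(S)}(c_i,\partial_0)\le M+1$. Two marked vertices are at $\C(S)$-distance $\le 2(M+1)$, hence at index-distance $\le L_0:=K\bigl(2(M+1)+K\bigr)$ by quasigeodesity, so the marked indices lie in an interval $[i_1,i_2]$ with $i_2-i_1\le L_0$, a quantity depending only on $S$ and $K$. Outside $[i_1,i_2]$ every vertex of $\gamma$ is at distance $>M+1$ from $\partial_0$, so by the Morse property and (a) every vertex of a geodesic joining two such vertices projects non-trivially to $Z$; Theorem~\ref{thm:BGIT}, with constant $B_0$ say, then bounds the projection diameter of that geodesic by $B_0$. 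Applying this to $\gamma|_{[0,i_1-1]}$ and to $\gamma|_{[i_2+1,n]}$ gives $\dist_Z(\pi_Z(c_0),\pi_Z(c_{i_1-1}))\le B_0$ and $\dist_Z(\pi_Z(c_{i_2+1}),\pi_Z(c_n))\le B_0$ (with the evident degenerate reading when $i_1=0$, when $i_2=n$, or when no vertex is marked, in which case the geodesic from $c_0$ to $c_n$ already works). Since $\gamma|_{[i_1-1,i_2+1]}$ has $\le L_0+2$ edges and all its vertices project non-trivially, the iterated form of (b) gives $\dist_Z(\pi_Z(c_{i_1-1}),\pi_Z(c_{i_2+1}))\le C_0(L_0+2)$. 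Summing the three estimates, $\dist_Z(\pi_Z(c_0),\pi_Z(c_n))\le 2B_0+C_0(L_0+2)$; applying the same to every subpath $\gamma|_{[i,j]}$ (again a $K$-quasigeodesic path on which $\pi_Z$ is everywhere defined) and using that each $\pi_Z(c_i)$ has diameter $\le 1$, we obtain $\diam\bigcup_i\pi_Z(c_i)\le 2B_0+C_0(L_0+2)+1=:B$, which depends only on $S$ and $K$.

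The step I expect to be the main obstacle is the one flagged at the outset: one cannot transport the geodesic bound to $\gamma$ by a naive ``closest point plus Lipschitz'' argument, because the short paths realizing the closest-point approximations may cross $\partial Z$ and thereby destroy all control of the projection — a high power of a Dehn twist supported in $Z$ displaces a curve only boundedly in $\C(S)$ while moving its $Z$-projection arbitrarily far, and the efficient $\C(S)$-path realizing that small displacement necessarily leaves $Z$. The marked-interval bookkeeping exists precisely to quarantine this behaviour in a sub-path of uniformly bounded combinatorial length, on which the crude per-edge estimate (b) costs only $O(1)$, while genuine BGIT governs the two long ends.
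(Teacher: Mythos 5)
Your proof is correct, and it implements precisely the recipe the paper itself gives for this corollary (Theorem~\ref{thm:BGIT} plus the Morse lemma plus the coarse Lipschitz property of $\pi_Z$ along paths where it is defined), with the marked-interval bookkeeping correctly isolating the only place where the \emph{path} hypothesis is genuinely needed — exactly the point the paper's warning after the statement flags. Nothing to add.
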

We warn the reader that $c_i$ being a quasigeodesic \emph{path}
(i.e. an edge-path in the curve graph which is a quasigeodesic) is
crucial here.

A subsurface \(Z\) such that \(\pi_Z(a)\neq\emptyset\) for every \(a\in\NC(S)\) is a \emph{witness} for \(N\C(S)\) (see \cite{ms_geometry}, where such surfaces are called holes). Note that witnesses for the nonseparating curve graph are exactly the subsurfaces of full genus. 

\subsection*{Cell-like sets and maps}

We will need a few definitions and a result from point-set topology. A subset \(K\) of a surface is called \emph{cell-like} if it is the intersection of a nested family of closed disks \(D_i\), such that for every \(i\), \(D_{i+1}\subset \text{int}(D_i)\).
The following lemma is well-known. Since we were not able to find a reference, we include a sketch of proof.
\begin{lemma}\label{lem:cell-like}
A subset $C$ of the plane is cell-like if and only if it is compact, connected, and its complement is connected.
\end{lemma}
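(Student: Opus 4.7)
The plan is to handle the two directions separately, with essentially all the content in the reverse direction.

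For the \emph{forward direction}, suppose $C=\bigcap_i D_i$ with $D_{i+1}\subset\operatorname{int}(D_i)$ a nested sequence of closed topological disks in the plane. Then $C$ is compact as a nested intersection of compact sets, and connected as a nested intersection of continua in a Hausdorff space. The complement $\mathbb{R}^2 \setminus C = \bigcup_i(\mathbb{R}^2 \setminus D_i)$ is an increasing union of connected sets---by the Jordan curve theorem, $\partial D_i$ separates the plane into a bounded and an unbounded region, and $\mathbb{R}^2 \setminus D_i$ is the unbounded one, hence connected---so $\mathbb{R}^2 \setminus C$ is connected.

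For the \emph{reverse direction}, I pass to the one-point compactification $S^2$ and consider $U := S^2 \setminus C$. Since $C$ is compact and $\mathbb{R}^2 \setminus C$ is connected, $U$ is a proper open connected subset of $S^2$. A classical fact (deducible from Alexander duality, or from the observation that $U$ is an open $2$-manifold with $\tilde H_1=0$) states that when $C\subset S^2$ is compact and connected, $S^2\setminus C$ is simply connected. Invoking the topological uniformization theorem---every simply connected proper open subset of $S^2$ is homeomorphic to the open unit disk $\mathbb{D}$---I fix a homeomorphism $\phi: U \to \mathbb{D}$.

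I then construct the nested disks. For $r\in(0,1)$, set $D_r := S^2 \setminus \phi^{-1}(B(0, r))$. Since $\phi^{-1}(\partial B(0,r))$ is a Jordan curve in $S^2$, Schoenflies identifies each of its two complementary regions with an open topological disk whose closure is a closed topological disk; the set $D_r$ is one of these closures. As $C$ is bounded, $\infty \in U$, so for any $r > |\phi(\infty)|$ one has $\infty \in \phi^{-1}(B(0,r))$, which gives $D_r \subset \mathbb{R}^2$. Picking $r_n \nearrow 1$ with $r_1 > |\phi(\infty)|$ produces closed disks $D_{r_n}$ in the plane satisfying $D_{r_{n+1}} \subset \operatorname{int}(D_{r_n})$ (since $\overline{B(0, r_n)} \subset B(0, r_{n+1})$) and $\bigcap_n D_{r_n} = S^2 \setminus \bigcup_n \phi^{-1}(B(0,r_n)) = S^2 \setminus U = C$.

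The main obstacle is the appeal to topological uniformization, which is a nontrivial classical result. A fully elementary alternative would approximate $C$ from outside by a polygonal neighborhood, fill in the bounded complementary components to kill any holes, and smooth the outer boundary into a Jordan curve; however, verifying that this produces genuine closed topological disks that can simultaneously be made to nest properly requires substantially more technical bookkeeping than the route above.
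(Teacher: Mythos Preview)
Your argument is correct, and the forward direction is essentially the paper's. For the reverse direction you take a genuinely different route: the paper constructs a nonnegative function $\Phi$ vanishing exactly on $C$ and smooth off $C$, picks a decreasing sequence of regular values via Sard, and lets $D_i$ be the sublevel set $\Phi^{-1}([0,\varepsilon_i])$ together with the bounded components of its complement; the resulting $D_i$ is a compact planar surface with connected complement, hence a disk, and one checks $\bigcap D_i=C$. Your approach instead passes to $S^2$, uses Alexander duality (plus freeness of $\pi_1$ for open surfaces) to see that $U=S^2\setminus C$ is simply connected, invokes the topological uniformization/classification theorem to identify $U$ with an open disk, and then pulls back complements of concentric round disks via Schoenflies. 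Your route is conceptually clean and makes the nesting and the equality $\bigcap D_{r_n}=C$ transparent, at the cost of importing heavier algebraic-topological input (Alexander duality, classification of simply connected noncompact surfaces, Schoenflies); the paper's route stays within differential topology (smooth approximation and Sard) but leaves a bit more to verify about why each filled sublevel set is a single disk. One small wording issue: your ``classical fact'' as stated (``when $C\subset S^2$ is compact and connected, $S^2\setminus C$ is simply connected'') is false without the connectedness of the complement that you established just before---a circle in $S^2$ is a counterexample---so you should fold that hypothesis into the statement.
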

\begin{proof}
A decreasing intersection of compact and connected spaces is compact and connected.
Furthermore, the complement of a closed disk in the plane is connected, and an increasing union of connected space is connected. The direct implication follows.
Let us assume $C$ is a compact and connected subset of the plane whose complement is connected. By standard analysis arguments, we can find a continuous function $\Phi: \mathbb{R}^2 \to [0, +\infty)$ that vanishes exactly on $C$ and is smooth outside $C$. Let $(\varepsilon_i)$ be a decreasing sequence of regular values of $\Phi$ that converges to $0$ (provided by Sard's theorem). For each $i$, let $D_i$ be the union of the set $\Phi^{-1}([0, \varepsilon_i])$ with all the connected components of its complement that are bounded. Each $D_i$ is a (smooth) surface with boundary, and with connected complement in the plane, thus a disk. We claim that the intersection of the $D_i$'s equals $C$. To prove the non trivial inclusion, consider a point $x$ which is not in $C$. By hypothesis there is a path $\gamma$ from $x$ to infinity. For $i$ large enough, $\epsilon_i$ is less than the infimum of $\Phi$ on $\gamma$, thus $\gamma$ is disjoint from the boundary of $D_i$, and since $D_i$ is a disc, $\gamma$ is also disjoint from $D_i$. In particular $x$ is not in $D_i$.
\end{proof}

Let $S$ be a closed orientable surface of genus at least one, and $p: \widetilde S \to S$ the universal cover. A compact subset $C \subset S$ is said to be \emph{inessential} if it is the image under $p$ of a compact connected set $\widetilde{C}$ which is disjoint from all its translate under automorphisms of $p$. The compact inessential set $C$ is said to be \emph{filled} if, furthermore, the complement of $\widetilde C$ is connected. By the previous lemma, $\widetilde C$ is cell-like, and then it is easy to check that the compact inessential filled set $C$ is also cell-like.

 A continuous map \(q:X\to Y\) between surfaces is called \emph{cell-like} if for every \(y\in Y\), \(q^{-1}(y)\) is cell-like.
We will use the following fact, proven in \cite{mr_topology}:
\begin{thm}\label{thm:cell-like}
Let \(q:X\to Y\) be a cell-like map between surfaces. Then \(q\) is a limit of homeomorphisms (uniformly on compact subsets).
\end{thm}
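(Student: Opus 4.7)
The plan is to deduce the result from Moore's decomposition theorem for 2-manifolds combined with Bing's shrinking criterion. Lemma \ref{lem:cell-like} tells us that each fiber $q^{-1}(y)$ is, in any disk chart, a connected compactum with connected complement, so the collection $\mathcal{D}=\{q^{-1}(y):y\in Y\}$ is an upper semicontinuous decomposition of $X$ by cell-like sets. Moore's theorem for 2-manifolds guarantees that the quotient $X/\mathcal{D}$ is again a surface and that the map induced by $q$ is a homeomorphism $\phi:X/\mathcal{D}\to Y$; in particular $q=\phi\circ\tilde q$ where $\tilde q:X\to X/\mathcal{D}$ is the quotient projection. It therefore suffices to show that $\tilde q$ is a uniform-on-compacta limit of homeomorphisms.

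To do this I would apply Bing's shrinking criterion, which reduces the problem to the following statement: for every compact $K\subset X$ and every $\varepsilon>0$, there is a homeomorphism $\varphi:X\to X$ such that $\tilde q\circ\varphi$ is within $\varepsilon$ of $\tilde q$ on $K$, while $\mathrm{diam}(\varphi(C))<\varepsilon$ for every $C\in\mathcal{D}$ that meets $K$. Upper semicontinuity implies that only finitely many $C$ meeting $K$ have diameter $\geq \varepsilon/2$. For each such $C$, Lemma \ref{lem:cell-like} lets me enclose $C$ in a disk $D_C$ of very small diameter, with the $D_C$'s disjoint across the finite family. A radial pinching homeomorphism, supported in a slightly larger disk than $D_C$ and collapsing $D_C$ into a disk of diameter less than $\varepsilon$, does the job locally; the composition of these finitely many pinches is the desired $\varphi$.

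The main obstacle is showing that the infinite family of small (but non-singleton) fibers is handled correctly, i.e.\ that the shrinking procedure produces an actual homeomorphism of $X$ and not just a sequence of near-homeomorphisms. This is the technical heart of the Bing shrinking machinery: one must arrange the pinches with rapidly decaying support so that their composition converges uniformly to a homeomorphism. The nested disk structure guaranteed by the cell-like hypothesis and the local tameness inherited from the ambient surface structure make this feasible. Once $\varphi$ is produced, composing with $\phi$ yields a homeomorphism $h_\varepsilon:X\to Y$ that is $\varepsilon$-close to $q$ on $K$, and letting $\varepsilon\to 0$ along an exhaustion of $X$ by compact sets gives the claimed approximation by homeomorphisms.
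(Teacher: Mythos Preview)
The paper does not prove Theorem~\ref{thm:cell-like}; it simply quotes it as a known fact from \cite{mr_topology}. So there is no in-paper argument to compare your proposal to.

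Your overall strategy---Moore's theorem plus Bing shrinking---is the classical route to this result, but your sketch contains a genuine error. You assert that ``upper semicontinuity implies that only finitely many $C$ meeting $K$ have diameter $\geq \varepsilon/2$.'' This is false. Consider $X=\mathbb{R}^2$ with the decomposition whose nondegenerate elements are the arcs $\{x\}\times[0,1]$ for $x$ in the standard Cantor set $C\subset[0,1]$, all other elements being singletons. This decomposition is upper semicontinuous and cell-like, and by Moore's theorem the quotient is again $\mathbb{R}^2$; yet uncountably many fibers meeting $K=[0,1]^2$ have diameter $1$. Your reduction to a finite pinching step therefore does not go through, and the obstacle you flag (handling the \emph{small} fibers) is not where the real difficulty lies.

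The actual shrinking argument must treat infinitely many large fibers simultaneously. In dimension two this is doable because cell-like continua are cellular (each sits inside arbitrarily small disks), and one can build a single ambient homeomorphism that shrinks an entire null family of cellular sets at once, but this requires a more careful inductive or limiting construction than the ``finitely many disjoint radial pinches'' you describe. If you want to carry out the proof yourself rather than cite it, you should look at the shrinkability proofs for cellular decompositions of $2$-manifolds (e.g.\ in Daverman's book on decompositions of manifolds), where the correct induction is spelled out.
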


\section{Proving the comparison results}
This section is dedicated to proving the comparison results: Theorems \ref{thm:dense} and \ref{thm:semi-conjugacy}, and Corollary \ref{cor:torus-case}. The first observation is that in Theorem \ref{thm:dense} we only need to show one inequality, because the other one is clear:

\begin{rmk}\label{rmk:inequality}
For every \(P\in\Pf(f)\), \(\tl{f}\geq \tl{[f]_{P}}\), since the projection \(\NCdagger(S)\to\NC(S\ssm P)\) is distance non-increasing. Hence 
\[\tl{f}\geq \sup_{P\in\Pf(S)}\tl{[f]_{P}}.\]
\end{rmk}

With this remark at hand, let us prove Theorem \ref{thm:dense}.

\begin{proof}[Proof of Theorem \ref{thm:dense}]
By Remark \ref{rmk:inequality}, it is enough to show that \[\tl{f}\leq \sup_{P\in\Pf(S)}\tl{[f]_{P}};\]
we will show that for every \(\varepsilon>0\) there is \(P\in\Pf(f)\) such that \(\tl{[f]_{P}}\geq \tl{f}-\varepsilon\).

If \(\tl{f}=0\), the result is obvious. So suppose \(\tl{f}>0\). Up to taking a power, we can assume that \(\tl{f}\geq L(\delta)\), where \(\delta\) is the hyperbolicity constant for \(\NCdagger(S)\) and \(L(\delta)\) is given by Lemma \ref{lem:quasigeods}.

Choose a curve \(\alpha\) minimizing \(\distdagger(\alpha,f(\alpha))\). Then by Lemma \ref{lem:quasigeods}, there is \(K=K(\delta)\) so that \((f^n(\alpha))_{n\in \Z}\) is a \(K\)-quasigeodesic. Let \(N=N(K+1)\) and \(K'=K'(K+1)\) be given by Lemma \ref{lem:local-to-global}, and \(M\) the Morse constant for \(K'\)-quasigeodesics.

Fix \(\varepsilon>0\); choose \(k\geq N\) such that
\begin{itemize}
\item \(\frac{2M+2}{k}<\frac{\varepsilon}{2}\), and
\item \(\frac{\distdagger(\alpha,f^k(\alpha))}{k}\geq \tl{f}-\frac{\varepsilon}{2}\).
\end{itemize}

For every \(i=1,\dots, k\), let \(\alpha_i\) be a curve transverse to \(\alpha\) and \(\mathcal{C}^0\)-close to \(f^i(\alpha)\), so that \[\distdagger(\alpha_i,f^i(\alpha))\leq 2.\] 
Fix then \(P\in\Pf(f)\) such that \(\alpha\) and \(\alpha_i\), for \(i=1,\dots, k\), are in minimal position with respect to \(P\). The existence of $P$ follows from the hypothesis that the periodic points of $f$ are dense in $S$.
Note that for every \(i\), \(\dist_P([f^i(\alpha)],[\alpha_i])\leq 2\).
%Then we know that for every \(i=1,\dots,k\)
%\[\distdagger(\alpha,f^i(\alpha))\leq\distdagger(\alpha,\alpha_i)+2=\dist_P([\alpha],[\alpha_i])+2\leq\dist_P([\alpha], [f^i(\alpha)])+4.\]

We claim that \(([f^n(\alpha)])_n\subset \NC(S\ssm P)\) is a \(k\)-local \((K+4)\)-quasigeodesic, and hence by Lemma \ref{lem:local-to-global} it is a \(K'\)-quasigeodesic. Indeed, suppose \(n,m\in \Z\) satisfy \(|n-m|\leq k\). Without loss of generality, suppose \(n\leq m\). Then, using the assumptions on \(\alpha\), the \(\alpha_i\)'s and Lemma \ref{lem:local-approximation} we get:
\begin{align*}
\dist_P([f^n(\alpha)],[f^m(\alpha)])& =\dist_P([\alpha],[f^{m-n}(\alpha)])\geq\dist_P([\alpha],[\alpha_{m-n}])-2\\
& =\distdagger(\alpha,\alpha_{m-n})-2\geq \distdagger(\alpha,f^{n-m}(\alpha))-4\geq \frac{1}{K}(m-n)-K-4
\end{align*}
and
\[\dist_P([f^n(\alpha)],[f^m(\alpha)])\leq\distdagger(f^n(\alpha),f^{m-n}(\alpha))\leq K(m-n)+K.\]

By Lemma \ref{lem:quasigeods}, for \(M=M(K',\delta)\),
\[\tl{[f]_{P}}\geq \frac{\dist([\alpha],[f^k(\alpha)])}{k}-\frac{2M}{k}\geq \frac{\distdagger(\alpha,f^k(\alpha))}{k}-\frac{2M+2}{k}\geq \tl{f}-\varepsilon.\]
\end{proof}

To prove Theorem \ref{thm:semi-conjugacy} we will need the following:

\begin{lemma}\label{lem:limit-homeos}
Let \(S\) and \(\bar{S}\) be surfaces. Suppose \(q:S\to \bar{S}\) is the limit, for the uniform convergence, of a sequence of homeomorphisms \(q_n\). Let \(\bar{\alpha},\bar{\beta}\in\NCdagger(\bar{S})\) and \(\alpha,\beta\in \NCdagger(S)\) such that \(\alpha\cap q^{-1}(\bar{\alpha})=\emptyset\) and \( \beta\cap q^{-1}(\bar{\beta})=\emptyset\). Then
\[|\distdagger_{\bar{S}}(\bar{\alpha},\bar{\beta})-\distdagger_S(\alpha,\beta)|\leq 2.\]
\end{lemma}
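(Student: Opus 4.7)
The plan is to reduce the problem to the case of an honest homeomorphism, for which the statement becomes a direct triangle-inequality argument. First, I would rewrite the hypothesis $\alpha\cap q^{-1}(\bar\alpha)=\emptyset$ as the equivalent disjointness $q(\alpha)\cap\bar\alpha=\emptyset$ in $\bar S$, and similarly $q(\beta)\cap\bar\beta=\emptyset$. Because $q(\alpha)$ and $\bar\alpha$ are compact, there is an open neighborhood of $q(\alpha)$ still disjoint from $\bar\alpha$, and likewise for $q(\beta)$ and $\bar\beta$.

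Next, I would exploit uniform convergence of $q_n$ to $q$ to conclude that for all sufficiently large $n$ the curves $q_n(\alpha)$ and $q_n(\beta)$ lie inside these open neighborhoods, so that $q_n(\alpha)\cap\bar\alpha=\emptyset$ and $q_n(\beta)\cap\bar\beta=\emptyset$. Since $q_n$ is a homeomorphism, $q_n(\alpha)$ and $q_n(\beta)$ are nonseparating simple closed curves in $\bar S$, hence vertices of $\NCdagger(\bar S)$. As disjoint nonseparating curves are adjacent in $\NCdagger$ in every genus (since being disjoint certainly implies intersecting at most once), one gets $\distdagger_{\bar S}(q_n(\alpha),\bar\alpha)\leq 1$ and $\distdagger_{\bar S}(q_n(\beta),\bar\beta)\leq 1$.

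Finally, since each $q_n\colon S\to\bar S$ is a homeomorphism, it induces a simplicial isomorphism, hence an isometry, between $\NCdagger(S)$ and $\NCdagger(\bar S)$, so $\distdagger_S(\alpha,\beta)=\distdagger_{\bar S}(q_n(\alpha),q_n(\beta))$. Two applications of the triangle inequality along the chain $\bar\alpha,q_n(\alpha),q_n(\beta),\bar\beta$ (in both orders) give the desired $|\distdagger_{\bar S}(\bar\alpha,\bar\beta)-\distdagger_S(\alpha,\beta)|\leq 2$. There is no substantive obstacle; the only points requiring care are recording that homeomorphisms preserve non-separating-ness and checking the adjacency convention in the genus-one case.
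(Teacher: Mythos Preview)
Your proposal is correct and follows essentially the same argument as the paper: pass from $q$ to a nearby homeomorphism $q_n$ using uniform convergence and open neighborhoods of $q(\alpha),q(\beta)$ disjoint from $\bar\alpha,\bar\beta$, then use that $q_n$ induces an isometry of fine curve graphs together with the triangle inequality. The only cosmetic difference is that the paper phrases the neighborhood step via small regular neighborhoods $N(q(\alpha)),N(q(\beta))$, but the content is identical.
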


\begin{proof}
Choose sufficiently small regular neighborhoods \(N(q(\alpha))\) of \(q(\alpha)\) and \(N(q(\beta))\) of \(q(\beta)\) such that
\[N(q(\alpha))\cap\bar{\alpha}=\emptyset\]
and
\[N(q(\beta))\cap\bar{\beta}=\emptyset.\]
Then for every \(n\) sufficiently large, \(q_n(\alpha)\subset N(q(\alpha))\) and \(q_n(\beta)\subset N(q(\alpha))\), so \(q_n(\alpha)\cap \bar{\alpha}=\emptyset\) and \(q_n(\beta)\cap \bar{\beta}=\emptyset\). This implies that \(|\distdagger_{\bar{S}}(q_n(\alpha),q_n(\beta))-\distdagger_{\bar{S}}(\bar{\alpha},\bar{\beta})|\leq 2\), As \(q_n\) is a homeomorphism,
\(\distdagger_S(\alpha,\beta)=\distdagger_{\bar{S}}(q_n(\alpha),q_n(\beta))\), so
\[|\distdagger_S(\alpha,\beta)-\distdagger_{\bar{S}}(\bar{\alpha},\bar{\beta})|\leq 2.\]
\end{proof}

\begin{proof}[Proof of Theorem \ref{thm:semi-conjugacy}]
\((1)\) Choose \(\bar{\alpha}\in\NCdagger(\bar{S})\) and \(\alpha\in\NCdagger(S)\) such that \(\alpha\cap q^{-1}(\bar{\alpha})= \emptyset\). By Theorem \ref{thm:cell-like}, \(q\) is a limit of homeomorphisms and we can apply Lemma \ref{lem:limit-homeos} with \(\bar{\beta}=\bar{f}^k(\bar{\alpha})\) and \(\beta=f^k(\alpha)\) and deduce that for every \(k\)
\[|\distdagger_S(\alpha,f^k(\alpha))-\distdagger_{\bar{S}}(\bar{\alpha}, \bar{f}^k(\bar{\alpha}))|\leq 2.\]
In particular the stable translation lengths of \(f\) and \(\bar{f}\) are the same.

\((2)\) Let $\bar{P}, Q$ be as in the statement. Set \(\sat{Q}:=q^{-1}(q(Q))=q^{-1}(\bar{P})\). For every \(x\in Q\), we can choose an open disk \(D_x\) containing \(q^{-1}(q(x))\) such that the closures of all these disks are pairwise disjoint. Given an essential curve \(\alpha\) on \(S\ssm Q\), we can homotope it to a curve \(\alpha'\) disjoint from all \(D_x\). The map \(\varphi\) associating to \([\alpha]\in\NC(S\ssm Q)\) the homotopy class \([\alpha']\in\NC(S\ssm \sat{Q})\) is an isometry between the two graphs, which is equivariant with respect to the action induced by \(f\).

Next, define an isometry \(\psi:\NC(S\ssm\sat{Q})\to\NC(\bar{S}\ssm P)\) as follows. By Theorem \ref{thm:cell-like}, \(q\) is a limit of homeomorphisms \(q_n\). Note that for every nonseparating curve \(\alpha\) on \(S\ssm\sat{Q}\), \([q_n(\alpha)]\in\NC(\bar{S}\ssm P)\) is eventually constant (since the \(q_n(\alpha)\) are essential curves contained in regular neighborhoods of each other). So we set \(\psi([\alpha])=[q_n(\alpha)]\), for any \(n\) sufficiently large so that the homotopy class has stabilized. It is not hard to check that the map is well defined and is distance-preserving. To show that it is a bijection, it is enough to notice that an inverse can be constructed as follows: given \([\beta]\in\NC(\bar{S}\ssm \bar{P})\), choose an open regular neighborhood \(N(\beta)\) of \(\beta\) in $\bar{S} \setminus \bar{P}$. Since \(q^{-1}(N(\beta))=\lim_{n\to \infty}q_n^{-1}(N(\beta))\), \(q^{-1}(N(\beta))\) is an open annulus in $S \setminus \sat{Q}$, so it contains a unique essential curve \(\alpha\), up to homotopy. One can show that the map associating to \([\beta]\) the class \([\alpha]\) is an inverse of \(\psi\). Moreover, \(\psi\) is equivariant with respect to the maps induced by \(f\) and \(\bar{f}\): indeed, for any \([\alpha]\in\NC(S\ssm \sat{Q})\), \(\psi([f]_\sat{Q}([\alpha]))=[q_n(f(\alpha))]\), for every \(n\) sufficiently large, and \([\bar{f}]_{\bar{P}}(\psi([\alpha])=[\bar{f}(q_m(\alpha))])\), for every sufficiently large \(m\). For \(n\) sufficiently large, there is a regular neighborhood \(N(q_n(f(\alpha)))\) which contains \(q(f(\alpha))=\bar{f}(q(\alpha))\), and if \(m\) is sufficiently large, \(q_m(\alpha)\) and \(q(\alpha)\) are close enough so that \[N(q_n(f(\alpha)))\supset \bar{f}(q_n(\alpha)).\] So \(q_n(f(\alpha))\) and \(\bar{f}(q_n(\alpha))\) are homotopic, i.e.\ they define the same class.

This implies that \[[\bar{f}]_{\bar{P}}=(\psi\circ\varphi)\circ [f]_Q \circ (\psi\circ\varphi)^{-1},\] where \(\psi\circ\varphi\) is an isometry. In particular, \([\bar{f}]_{\bar{P}}\) and \([f]_Q\) have the same translation length.
\end{proof}

We end the section by proving the approximation result for the stable translation length on \(\Homeo_0(T^2)\).

\begin{proof}[Proof of Corollary \ref{cor:torus-case}]
If \(f\) is not a hyperbolic isometry, the result is clear. If \(f\) is hyperbolic, its rotation set has non-empty interior by \cite{bhmmw_rotation}. So by \cite{gst_fully}, \(f\) is semi-conjugate via a map \(q\) to \(g\in\Homeo_0(T^2)\) with a dense set of periodic points. The map \(q\) is cell-like since, by \cite[Corollary 5.7]{gst_fully}, preimages of points are inessential filled continua, and thus cell-like by Lemma~\ref{lem:cell-like} and the comments that follows the proof. So by Theorem \ref{thm:semi-conjugacy}
\[\tl{f}=\tl{g}\] 
and by Theorem \ref{thm:dense}
\[\tl{g}=\sup_{P\in\Pf(g)}\tl{[g]_{P}}.\]

Note that for every \(x\in T^2\) which is \(g\)-periodic, the preimage \(q^{-1}(x)\) contains a periodic point for \(f\). Indeed, up to taking powers, we can assume that \(x\) is fixed. Its preimage is a cell-like continuum which is invariant under \(f\). By applying the Cartwright-Littlewood theorem \cite{cl_some}, \(q^{-1}(x)\) contains a fixed point of \(f\).

For every \(P\in\Pf(g)\) we can then choose \(Q_P\in\Pf(f)\) contained in \(q^{-1}(P)\) and such that \(q(Q_P)=P\). By Theorem \ref{thm:semi-conjugacy} we know that \(\tl{[f]_{Q_P}}=\tl{[g]_{P}}\), so:
\[\tl{f}\geq \sup_{Q\in\Pf(f)}\tl{[f]_Q}\geq \sup_{P\in\Pf(g)}\tl{[f]_{Q_P}}=\sup_{P\in\Pf(g)}\tl{[g]_P}=\tl{g}=\tl{f}.\]
\end{proof}

\section{Rationality of stable translation lengths on \(\NC(S)\)}\label{sec:rational}
The main goal of this section is to prove Theorem \ref{thm:rational}. We will deduce it from the following:

\begin{thm}\label{thm:rational-surviving}
Let \(S\) be a surface of genus at least one and \(\varphi\in\mcg(S)\) a pseudo-Anosov mapping class. Then there is \(m>0\) such that \(\varphi^m\) acting on \(\Csurv(S)\) has an invariant geodesic.
\end{thm}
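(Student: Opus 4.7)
The plan is to adapt Bowditch's strategy \cite{bowditch_tight} for rationality of stable translation lengths on $\C(S)$, replacing tight geodesics with an analogous notion in $\Csurv(S)$. Since $\varphi$ is pseudo-Anosov, its stable and unstable laminations $\lambda^\pm$ yield two distinct Gromov boundary points of $\Csurv(S)$, and the goal is to produce a bi-infinite geodesic between them that is preserved setwise by a suitable power $\varphi^m$. Rationality then follows, since on a $\varphi^m$-invariant geodesic the translation length of $\varphi^m$ is an integer, so that of $\varphi$ is in $\tfrac{1}{m}\Z$.

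The central tool I would introduce is a notion of \emph{tight geodesic} in $\Csurv(S)$, following Masur--Minsky: a geodesic $c_0,\dots,c_n$ such that each intermediate vertex $c_i$ is a boundary component of the subsurface $\Span{c_{i-1},c_{i+1}}$. Two properties are needed: (i) any pair of vertices is connected by at least one tight geodesic, and (ii) the set of tight geodesics joining two fixed vertices is finite. Granted these, I fix a vertex $\alpha$ and consider, for each $k$, the finite set $\mathcal{T}_k$ of tight geodesics from $\alpha$ to $\varphi^k(\alpha)$. The mapping class $\varphi$ sends tight geodesics to tight geodesics. Choosing $k$ large enough (using hyperbolicity and Lemma~\ref{lem:quasigeods}) so that each such geodesic is forced to shadow the quasi-axis of $\varphi$, a pigeonhole argument on the finite family $\mathcal{T}_k$ produces some $g\in\mathcal{T}_k$ and some integer $m$ such that $\varphi^{km}$ sends $g$ to itself extended ahead by $km$ steps. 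Concatenating translates of $g$ under powers of $\varphi^{km}$ yields a bi-infinite path, which is locally geodesic by tightness and hence, by a local-to-global argument of the kind of Lemma~\ref{lem:local-to-global} combined with the fact that a locally tight concatenation is actually globally geodesic, is a genuine bi-infinite geodesic fixed by $\varphi^{km}$.

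The main obstacle is establishing finiteness (ii) in the setting of $\Csurv(S)$ rather than $\C(S)$. The classical proof for $\C(S)$ exploits that along a tight geodesic between $a$ and $b$, each intermediate vertex is essentially determined by the subsurface projections $\pi_Z(a)$ and $\pi_Z(b)$ over witnesses $Z$, with bounded geodesic image (Theorem~\ref{thm:BGIT}, and its quasigeodesic version Corollary~\ref{cor:qBGIT}) controlling the sequence of intermediate projections. The adaptation to $\Csurv(S)$ requires identifying the correct class of witnesses for $\Csurv(S)$, checking that every vertex of $\Csurv(S)$ has nonempty projection to each such witness, and verifying that tight geodesics remain quasigeodesic paths so that Corollary~\ref{cor:qBGIT} applies (recall that quasigeodesicity of the \emph{path}, not merely of the endpoint sequence, is crucial). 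The second step of the proof --- equivariantly extracting the invariant geodesic --- is then a formal pigeonhole, but the finiteness is where the geometry of $\Csurv(S)$ must be worked with care.
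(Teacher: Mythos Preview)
Your outline follows Bowditch's template, but there is a concrete obstruction to transporting the Masur--Minsky notion of tightness to \(\Csurv(S)\). Your definition asks that each intermediate \(c_i\) be a boundary component of \(\Span{c_{i-1},c_{i+1}}\). When \(\Span{c_{i-1},c_{i+1}}\) happens to have full genus (i.e.\ is a witness for \(\Csurv(S)\)), every boundary component bounds a punctured sphere in \(S\) and is therefore \emph{not} a surviving curve. So the tightening move that underlies existence (i) simply does not stay inside \(\Csurv(S)\); you cannot in general produce a tight geodesic in your sense. This is not a minor technicality: it is exactly the phenomenon that distinguishes \(\Csurv(S)\) from \(\C(S)\), and it blocks both existence and the finiteness argument you sketch (which presupposes that tight geodesics exist and that tightness is a local, \(\varphi\)-equivariant condition).

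The paper sidesteps this by abandoning tightness altogether and using a \(\varphi\)-dependent constraint instead: a curve is \((\lambda^+,C)\)-\emph{like} if every nonempty subsurface projection is within \(C\) of \(\pi_Z(\lambda^+)\). Existence of \((\lambda^+,C)\)-like geodesics between iterates \(\varphi^{\pm n}(a_0)\) is obtained by an iterated replacement scheme (Lemma~\ref{lem:geodesic-in-span}, Lemma~\ref{lem:replacement-scheme}, Proposition~\ref{prop:exist-geo}) that only requires curves to lie in spans, never to be boundary components. Local finiteness (Lemma~\ref{lem:finiteness}) then comes from the distance formula in the marking graph, since all proper subsurface projections are uniformly bounded. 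The final extraction of an invariant geodesic is a cut-and-paste argument on a single bi-infinite \((\lambda^+,C)\)-like geodesic, rather than a pigeonhole on a finite family of finite segments. The trade-off: the paper's geodesics are tailored to \(\varphi\) and its lamination, whereas your tight geodesics would be canonical; but the paper's approach actually goes through in \(\Csurv(S)\), and yours does not without a substantially different definition of ``tight''.
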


The \emph{surviving curve graph} \(\Csurv(S)\) is the subgraph of \(\C(S)\) whose vertices correspond to homotopy classes of \emph{surviving} curves, i.e.\ curves not bounding a punctured disk. Said differently, surviving curves are those which are still essential (``survive'') after filling in all punctures of \(S\). Note that \(\NC(S)\subset \Csurv(S)\), and we have equality exactly when \(S\) is of genus one. Furthermore, the nonseparating and the surviving curve graphs have the same witnesses.

To deduce Theorem \ref{thm:rational} from Theorem \ref{thm:rational-surviving} we need the following results.

\begin{lemma}\label{lem:isometric-embeddings}
Let \(S\) be a surface of genus at least one. Then for any subsurface \(Z\) of full genus, the inclusions \(\NC(Z)\hookrightarrow\NC(S)\) and \(\Csurv(Z)\hookrightarrow\Csurv(S)\) are isometric embeddings.
\end{lemma}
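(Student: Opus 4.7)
The inclusion is automatically $1$-Lipschitz: curves of $\NC(Z)$ which are disjoint in $Z$ are still disjoint in $S$, and in genus one a single intersection in $Z$ remains a single intersection in $S$, so $d_S\leq d_Z$ on pairs of curves in $\NC(Z)$ (and likewise for $\Csurv$). The content of the lemma is therefore the reverse inequality, which I would prove by ``lifting'' geodesics from $\NC(S)$ into $\NC(Z)$.

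Starting with $\alpha,\beta\in\NC(Z)$ and a geodesic $\alpha=\gamma_0,\ldots,\gamma_n=\beta$ in $\NC(S)$, the goal is to construct $\gamma_i'\in\NC(Z)$ with $\gamma_0'=\alpha$, $\gamma_n'=\beta$ and consecutive $\gamma_i',\gamma_{i+1}'$ adjacent in $\NC(Z)$. First isotope the $\gamma_i$ simultaneously to minimal position with $\partial Z$. The key structural observation is that, $Z$ having full genus and subsurfaces being incompressible by our convention, each component of $S\ssm Z$ is a planar surface with essential boundary in $S$; consequently no component of $S\ssm Z$ carries a nonseparating curve of $S$ other than curves isotopic into $\partial Z$, and a simple closed curve contained in $Z$ is nonseparating in $S$ if and only if it is nonseparating in $Z$ (since planar pieces glued along $\partial Z$ cannot reconnect distinct components of $Z\ssm\gamma$).

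With this in hand, for each $\gamma_i$ I distinguish two cases. If $\gamma_i\subset Z$, set $\gamma_i'=\gamma_i$, which lies in $\NC(Z)$ by the observation above. Otherwise $\gamma_i$ meets $\partial Z$ in essential arcs; I take a regular neighborhood $N_i$ of $\partial Z\cup(\gamma_i\cap Z)$ in $Z$ and look at the components of $\partial N_i\ssm\partial Z$. Because $\gamma_i$ is nonseparating in $S$ and $Z$ has full genus, this collection contains at least one essential, nonperipheral, nonseparating curve in $Z$, which I choose as $\gamma_i'$. For the adjacency requirement, I would carry out all these surgeries coherently using a common collar of $\partial Z$: outside that collar, $\gamma_i'$ and $\gamma_{i+1}'$ agree with $\gamma_i$ and $\gamma_{i+1}$, which are disjoint in $S$; inside the collar the resulting boundary-parallel arcs can be arranged not to cross (or, in genus one, to cross at most once). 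This yields the desired path of length $n$ in $\NC(Z)$. The $\Csurv$ version follows by the same construction, replacing ``nonseparating in $Z$'' by ``essential after filling in the punctures of $Z$'' and using that $Z$ is also a witness for $\Csurv(S)$.

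The main obstacle I anticipate is the existence step in the second case: producing among the components of $\partial N_i\ssm\partial Z$ a curve that is simultaneously essential, nonperipheral, and nonseparating in $Z$. This is precisely where the full genus assumption is used, since in a planar subsurface no boundary surgery can return a nonseparating curve; but one still has to rule out, in the general full genus setting, that every component of $\partial N_i\ssm\partial Z$ is either inessential, peripheral, or separates $Z$. I expect the verification to reduce to a homological count using that $\gamma_i$ represents a nonzero class in $H_1(S)$ and that a planar complement cannot absorb all surgery representatives into peripheral or separating classes.
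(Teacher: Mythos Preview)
Your approach via subsurface projection is genuinely different from the paper's, and it has a real gap that is more serious than the one you flag.

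The paper's proof rests on a stronger structural fact than ``planar complement'': since $Z$ has full genus, an Euler characteristic count shows that each component of $S\ssm Z$ is a punctured disk with \emph{exactly one} boundary circle. Hence, after filling in all but one puncture in each complementary component, one obtains a surface homeomorphic to $\operatorname{int}(Z)$, via a homeomorphism $f$ that is the identity on a slightly smaller copy $Z'\subset Z$. Applying $f$ to representatives $\gamma_i$ (with $\gamma_0,\gamma_n\subset Z'$) gives curves $f(\gamma_i)\subset Z$ with the same pairwise intersection numbers; nonseparating-ness and adjacency are preserved automatically. This bypasses both of your obstacles at once.

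In your scheme, the adjacency step is the real problem. A component $\gamma_i'$ of $\partial N_i\ssm\partial Z$ does not ``agree with $\gamma_i$'' outside the collar: it runs parallel to only \emph{some} of the arcs of $\gamma_i\cap Z$, and which arcs, and how they are joined along $\partial Z$, depends on which boundary component you selected. So for consecutive $i$, you must choose $\gamma_i'\in\partial N_i$ and $\gamma_{i+1}'\in\partial N_{i+1}$ that are simultaneously disjoint and nonseparating, and then make those choices compatible along the whole path. The hand-wave ``the boundary-parallel arcs can be arranged not to cross'' does not establish this; in the linked case (arcs of $\gamma_i\cap Z$ and $\gamma_{i+1}\cap Z$ alternating along a component of $\partial Z$) the collar arcs of any chosen $\gamma_i'$ necessarily cross some arcs of $\gamma_{i+1}\cap Z$, and whether the resulting intersections with $\gamma_{i+1}'$ can be removed depends on which component you pick on the other side. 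Subsurface projection is only coarsely Lipschitz, so without further argument you get a quasi-isometric embedding, not an isometric one.

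By contrast, the obstacle you do flag --- existence of a nonseparating curve among the $\partial N_i\ssm\partial Z$ --- is genuinely resolvable. Capping each component of $\partial Z$ by a disk yields a planar surface $\hat N_i$ containing $\gamma_i$, with $\partial\hat N_i=\{c_k\}$; since $H_1(\hat N_i;\Z/2)$ is generated by the $[c_k]$, we have $[\gamma_i]=\sum_{k\in I}[c_k]$ in $H_1(\bar S;\Z/2)$, and $[\gamma_i]\neq 0$ forces some $[c_k]\neq 0$. But this still leaves the adjacency issue untouched.
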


While the result is certainly known to the experts, we add a proof since we couldn't find it in the literature.

\begin{proof}
%\begin{enumerate}
%\item If the genus of \(S\) is one the result is obvious, so let us assume that the genus is at least two. Let \(a,b\in \NC(S)\). We just need to show that if \(c_0=a,c_1,\dots, c_n=b\) is a geodesic between \(a\) and \(b\) in \(\Csurv(S)\), then we have a path between \(a\) and \(b\) of the same length in \(\NC(S)\). If all \(c_i\) are nonseparating, we are done. Otherwise, let \(i\) be the smallest index so that \(c_i\) is separating. Since the path is geodesic, \(c_{i-1}\) and \(c_{i+1}\) need to intersect and be disjoint  from \(c_i\), so they are contained in the same component of \(S\ssm c_i\). As \(c_i\) is surviving, the other component contains a nonseparating curve \(c_i'\) that we can use to replace \(c_i\). Repeat the process until there are no separating curve left.
%\item 
We will prove the result for \(\NC(Z)\hookrightarrow\NC(S)\). The same proof works for \(\Csurv(Z)\hookrightarrow\Csurv(S)\).

Let \(a\) and \(b\) be in \(\NC(Z)\) and \(c_0=a,c_1,\dots, c_n=b\) a geodesic between them in \(\NC(S)\). We just need to show that we have a path in \(\NC(Z)\) between \(a\) and \(b\) of length \(n\). To this end, note that since \(Z\) is a witness, its complementary components can only be punctured disks, \(D_1,\dots, D_k\). 

Fix a subsurface \(Z'\) contained in the interior of \(Z\) and isotopic to \(Z\). Let \(P\) be the union, over all components \(D_i\), of all but one puncture of \(D_i\). Let \(f:S\cup P\to \operatorname{int}(Z)\) be a homeomorphism which is the identity on \(Z'\). Choose representatives \(\gamma_i\) of the \(c_i\) in pairwise minimal position so that \(\gamma_0,\gamma_n\subset Z'\). Set \(\gamma_i'=f(\gamma_i)\). Then each \(\gamma_i'\) is a nonseparating curve in \(Z\) and \(|\gamma_i'\cap\gamma_{i+1}'|=|\gamma_i\cap\gamma_{i+1}|\leq 1\), so the homotopy classes \(c_i'\) of the \(\gamma_i'\) form a path in \(\NC(Z)\) between \(a\) and \(b\).
\end{proof}

Let us now show how to deduce Theorem \ref{thm:rational} from Theorem \ref{thm:rational-surviving}:

\begin{proof}[Proof of Theorem \ref{thm:rational}]
Let \(\varphi\in \mcg(S)\) be a hyperbolic isometry of \(\NC(S)\). By Nielsen--Thurston classification and work of Masur and Minsky \cite{mm_geometryI}, there is a subsurface \(Z\) of full genus which is fixed by \(\varphi\) and such that the mapping class \(\psi\) induced on \(Z\) by \(\varphi\) is a pseudo-Anosov mapping class of \(Z\). By Theorem \ref{thm:rational-surviving}, there is some \(m>0\) such that \(\psi^m\) has an invariant geodesic \(\Gamma=(c_i)_{i\in\Z}\) in \(\Csurv(Z)\). We want to show that we can modify it to construct a \(\psi^m\)-invariant geodesic in \(\NC(Z)\). 

If \(Z\) has genus one, \(\Csurv(Z)=\NC(Z)\) and we are done. Suppose then that \(Z\) has genus at least two. We can assume that \(m\geq 2\), so that \(\psi^m(c_0)=c_n\), for some \(n\geq 2\). If \(c_0\) is separating, \(c_{-1}\) and \(c_1\) must be contained in a connected component of \(S\ssm c_0\) (since they are disjoint from \(c_0\) and they have to intersect, as their distance is two). As \(c_0\) is surviving, there is a nonseparating curve \(c_0'\) in the connected component of \(S\ssm c_0\) which does not contain \(c_{-1}\) and \(c_1\). Replace \(\psi^{mk}(c_0)\) by \(\psi^{mk}(c_0')\), for every \(k\in\Z\), and note that we still have a \(\psi^m\)-invariant geodesic. Repeat the process for \(c_1,\dots, c_{n-1}\), replacing them and their iterates under powers of \(\psi^m\) whenever they are separating. The resulting geodesic \(\Gamma'\) is a \(\psi^m\)-invariant geodesic in \(\NC(Z)\). By Lemma \ref{lem:isometric-embeddings}, \(\NC(Z)\) isometrically embeds into \(\NC(S)\), so \(\Gamma'\) is a \(\psi^m\)-invariant geodesic in \(\NC(S)\) made of curves in \(Z\). Thus it is a \(\varphi^m\)-invariant geodesic in \(\NC(S)\).
\end{proof}

For the rest of the section our goal is to prove Theorem \ref{thm:rational-surviving}. This will be done in two steps: first, we show the existence of special geodesics with good local finiteness properties. We then use the same strategy as in \cite{bowditch_tight} to deduce, for any pseudo-Anosov mapping class, the existence of a geodesic preserved by a power of the mapping class.

\begin{rmk} In the rest of the section, \(\dist\) will denote the distance in the surviving curve graph and, given a subsurface \(Z\) of \(S\), \(\dist_Z\) will denote the distance in \(\AC(Z)\). Moreover, some sequences of curves in \(\Csurv(S)\) will be thought of as paths in other graphs (e.g.\ \(\C(S)\)). The words geodesic and quasigeodesic will always refer to geodesics and quasigeodesics in \(\Csurv(S)\), and we will make it explicit if we think of geodesics or quasigeodesics in other graphs. The distance in the curve graph will be denoted by $d_\C$.
\end{rmk}

Let \(\lambda\) be a filling lamination, i.e.\ a lamination whose complementary components contain no essential curve. We say that a curve \(c\) is \emph{\((\lambda,C)\)-like} if for every subsurface \(Z \neq S\) such that \(\pi_Z(c)\neq \emptyset\), we have \(\dist
_Z(\pi_Z(c),\pi_Z(\lambda))\leq C\). A geodesic \(\gamma=(c_i)_i\) in \(\Csurv(S)\) is \emph{\((\lambda,C)\)-like} if \(c_i\) is \((\lambda,C)\)-like for every \(i\).

Our goal, achieved in Proposition~\ref{prop:exist-geo} below, is to show that, given a curve \(a_0\) and a pseudo-Anosov mapping class \(\varphi\) with attracting lamination \(\lambda^+\), there is \(C>0\) so that for every \(n\in \N\) there is a \((\lambda^+,C)\)-like geodesic between \(\varphi^{-n}(a_0)\) and \(\varphi^n(a_0)\).

We will use the following facts:

\begin{lemma}\label{lem:geodesics-between-iterates}
Let \(\varphi\) be a pseudo-Anosov mapping class and \(a_0\in\Csurv(S)\). 
There are constants \(A\), \(K\) and $B$, depending on \(\varphi\) and \(a_0\), such that:
\begin{enumerate}
\item  for every \(n\in\Z\) and for every subsurface \(Z\), if $\pi_Z(\varphi^n(a_0)) \neq \emptyset$ then
\[\dist_Z(\pi_Z(\varphi^n(a_0)),\pi_Z(\lambda^+))\leq A;\]
\item  any geodesic between \(\varphi^{-n}(a_0)\) and \(\varphi^n(a_0)\) is a \(K\)-quasigeodesic in the curve graph. In particular, if \((c_i)_i\) is such a geodesic and \(Z\) a subsurface such that \(\pi_Z(c_i)\neq \emptyset\) for every \(i\) between \(i_0\) and \(j_0\),
\[\dist_Z(\pi_Z(c_{i_0}),\pi_Z(c_{j_0}))\leq B.\]
\end{enumerate}
\end{lemma}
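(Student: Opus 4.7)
My plan is to establish (1) and (2) separately, in each case reducing to an application of the Bounded Quasigeodesic Image Theorem (Corollary~\ref{cor:qBGIT}) on a suitable quasi-geodesic in \(\C(S)\).

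For part (1), I would exploit the \(\varphi\)-equivariance of subsurface projections and the \(\varphi\)-invariance of \(\lambda^+\): for any \(n\in\Z\) and subsurface \(Z\),
\[\dist_Z\bigl(\pi_Z(\varphi^n(a_0)),\pi_Z(\lambda^+)\bigr)=\dist_{\varphi^{-n}(Z)}\bigl(\pi_{\varphi^{-n}(Z)}(a_0),\pi_{\varphi^{-n}(Z)}(\lambda^+)\bigr).\]
Hence it is enough to produce a uniform bound \(A\) for \(\dist_Y(\pi_Y(a_0),\pi_Y(\lambda^+))\) as \(Y\) ranges over subsurfaces with \(\pi_Y(a_0)\neq\emptyset\). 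For this, I would consider the positive orbit \((\varphi^n(a_0))_{n\ge 0}\), which is a quasigeodesic ray in \(\C(S)\) with limit \(\lambda^+\) on the Gromov boundary (since \(\varphi\) is pseudo-Anosov). Applying Corollary~\ref{cor:qBGIT} to long finite prefixes of this ray, and using that for each fixed \(Y\) the projection \(\pi_Y(\varphi^n(a_0))\) approximates \(\pi_Y(\lambda^+)\) for \(n\) large, yields the required constant \(A\).

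For part (2), I would first check that \(\varphi\) acts loxodromically on \(\Csurv(S)\): the inclusion \(\Csurv(S)\hookrightarrow\C(S)\) is 1-Lipschitz, so the \(\Csurv\)-distance dominates \(d_\C\); and since \(\varphi\) is pseudo-Anosov on \(\C(S)\), its translation length there is positive and hence also on \(\Csurv(S)\). Standard hyperbolic geometry (or Lemma~\ref{lem:quasigeods}(1) after possibly replacing \(\varphi\) by a power) then provides a quasi-axis \(\mathcal{A}\) for \(\varphi\) in \(\Csurv(S)\), at bounded Hausdorff distance from the orbit \((\varphi^i(a_0))_{i\in\Z}\); this same orbit is a quasigeodesic in \(\C(S)\) as well, because \(\varphi\) is pseudo-Anosov. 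A \(\Csurv\)-geodesic \(\Gamma\) from \(\varphi^{-n}(a_0)\) to \(\varphi^n(a_0)\) is Morse-close to \(\mathcal{A}\) in \(\Csurv(S)\), hence also in \(\C(S)\) since \(d_\C\le\dist\). As \(\mathcal{A}\) is a \(\C\)-quasigeodesic, this forces \(\Gamma\) to be a \(\C\)-quasigeodesic with constants depending only on \(\varphi\) and \(a_0\), yielding \(K\). The bound \(B\) then follows immediately from Corollary~\ref{cor:qBGIT} applied to \(\Gamma\).

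The main obstacle I anticipate is in part (1): Corollary~\ref{cor:qBGIT} requires \(\pi_Y\) to be non-empty at every vertex of the quasi-geodesic, which can fail along the orbit ray when some \(\varphi^k(a_0)\) is disjoint (up to homotopy) from \(\partial Y\). To handle this, one would likely argue via a Behrstock-type inequality that such ``bad'' subsurfaces \(Y\) contribute only bounded projection distance. Part (2), by contrast, is a relatively clean application of the Morse lemma once loxodromicity on \(\Csurv(S)\) is established.
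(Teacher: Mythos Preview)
Your approach to part (2) is essentially the same as the paper's: use the Morse lemma in \(\Csurv(S)\) to pin the geodesic to the orbit, transfer closeness to \(\C(S)\) via \(d_\C\le\dist\), and exploit that the orbit is a \(\C\)-quasigeodesic. For part (1), your equivariance reduction to bounding \(\dist_Y(\pi_Y(a_0),\pi_Y(\lambda^+))\) uniformly in \(Y\) is a clean simplification not present in the paper, and the overall strategy (approximate \(\lambda^+\) by large iterates, then invoke qBGIT) matches.

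There are, however, two genuine gaps in part (1). First, Corollary~\ref{cor:qBGIT} applies only to quasigeodesic \emph{paths} (edge-paths in \(\C(S)\)); the paper explicitly warns this is essential. The orbit \((\varphi^n(a_0))_{n\ge 0}\) is a quasigeodesic but not a path, so you must first interpolate to a \(\varphi\)-invariant edge-path, as the paper does. Second, your proposed fix for the empty-projection obstacle---a Behrstock-type inequality---does not apply here: Behrstock compares projections to two \emph{overlapping} proper subsurfaces, but when \(\pi_Y(\varphi^k(a_0))=\emptyset\) the curve \(\varphi^k(a_0)\) is disjoint from \(Y\) and has no \(Y\)-projection to feed into the inequality. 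The paper circumvents this entirely: after passing to the invariant path, it uses qBGIT to discard initial and terminal segments where projections may be empty (at cost \(B\) each), localizes any large projection to a window \([\varphi^i(a_0),\varphi^{i+n_0}(a_0)]\) of fixed length \(n_0\), and then bounds that window's projection by the intersection number \(i(a_0,\varphi^{n_0}(a_0))\), which is independent of \(Y\). This intersection-number bound is the missing ingredient in your outline.
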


\begin{proof}
  \begin{enumerate}
  \item First note that there is a sequence $m_i\to \infty$, so that
    the geodesic representatives of $\varphi^{m_i}(a_0)$ Hausdorff
    converge to a lamination containing $\lambda^+$ (since $\lambda^+$
    is the attracting fixed point of $\varphi$ on the sphere of
    projective laminations). This implies that for any $Z$ there is a
    power $m$ so that
    \[ \dist_Z(\pi_Z(\varphi^m(a_0)), \pi_Z(\lambda^+)) \leq 4. \] 
      Thus it suffices to show that
    \[\dist_Z(\pi_Z(\varphi^n(a_0)),\pi_Z(\varphi^m(a_0)))\leq A\]
    for all $n,m$. This is a well-known fact, we sketch a proof for
    the convenience of the reader. First, extend the
    orbit $\varphi^k(a_0), k\in\mathbb{Z}$ to a $\varphi$--invariant
    path $\rho$. Since the orbit is a quasigeodesic, we may assume
    that $\rho$ is a $K$-quasigeodesic for some $K>0$.

	Let $B$ denote the constant from the Bounded Geodesic Image Theorem (Corollary~\ref{cor:qBGIT}) for $K$-quasigeodesics.
    Choosing $n_0 > 0$ large enough we see that if
    $\dist_Z(\pi_Z(\varphi^n(a_0)),\pi_Z(\varphi^m(a_0)))> A$, then
    there is an index $i$ so that
    $\dist_Z(\pi_Z(\varphi^i(a_0)),\pi_Z(\varphi^{i+n_0}(a_0)))>
    A-2B$. Namely, the Bounded Quasigeodesic Image Theorem
    %(Corollary~\ref{cor:qBGIT}) 
    allows us to discard initial and
    terminal segments outside the $2$--neighbourhood of $\partial Z$ where the path may 	have empty projection in $Z$, at the cost of $B$ each.

    It thus suffices to find a bound in the case where 
    $m=n+n_0$. But this follows immediately from the fact that the
    subsurface projection between $\varphi^n(a_0), \varphi^{n+n_0}(a_0)$ is always
    bounded by the intersection number of those curves, which equals the intersection 	
    number between $\varphi(a_0), \varphi^{n_0}(a_0)$, independent of $Z$.
  \item Suppose that $(c_i)_i$ is a geodesic in the surviving
    curve graph between \(\varphi^{-n}(a_0)\) and
    \(\varphi^n(a_0)\). Note that $(\varphi^j(a_0))_{j=-n}^n$ is
    a $k_0$-quasigeodesic between the same endpoints, where $k_0$
    depends only on $\varphi$ and $a_0$, since it acts loxodromically on the
    fine curve graph. By the Morse lemma, this implies that for each
    $i$ there is a power $j(i)$ so that
    \[ \dist(c_i, \varphi^{j(i)}(a_0)) \leq M. \]
    Using that $c_i$ is a geodesic, we obtain
    \[ |k - l| = \dist(c_k, c_l) \leq
      d(\varphi^{j(k)}(a_0), \varphi^{j(l)}(a_0)) + 2M \leq
      k_0|j(k)-j(l)| + k_0 + 2M. \] Now, $\varphi$ also acts on the
    curve graph as a loxodromic element, and so
    $(\varphi^j(a_0))_{j=-n}^n$ is a $k_1$-quasigeodesic in the
    curve graph. We thus have
    \[ \dist_\C(c_k,c_l) \geq \dist_\C(\varphi^{j(k)}(a_0),
      \varphi^{j(l)}(a_0)) - 2M \geq \frac{1}{k_1}|j(k)-j(l)| - k_1 -
      2M. \] Combining those two via $d(c_k,c_l) \geq d_\C(c_k,c_l)$, we see that $c_i$ is a
    \(\C(S)\)-quasigeodesic with constants depending only on $k_0,k_1,M$.

    The final sentence is now an immediate consequence of the Bounded
    Quasigeodesic Image Theorem (Corollary~\ref{cor:qBGIT}).
  \end{enumerate}
\end{proof}

The strategy to construct our special geodesics is to start from any geodesic between iterates of \(a_0\) and perform successive modifications until we find a path with the required properties. The basic procedure we will need is substituting a geodesic between two curves \(a,b\in\Csurv(S)\) by a geodesic made by curves contained in the subsurface filled by \(a\) and \(b\):

\begin{lemma}\label{lem:geodesic-in-span}
Let \(a,b\in\Csurv(S)\). Then there is a geodesic joining them such that all curves are contained in \(\Span{a,b}\).
\end{lemma}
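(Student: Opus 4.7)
I would proceed by strong induction on $n := d_{\Csurv(S)}(a,b)$. The cases $n \le 1$ are immediate since $a, b \in \Span{a,b}$. For $n \ge 2$, set $Z := \Span{a,b}$; the inductive step reduces to producing a single curve $c \in Z \cap \Csurv(S)$ with $d(a,c) = 1$ and $d(c,b) = n-1$. Indeed, applying induction to the pair $(c, b)$ at distance $n-1$ yields a geodesic from $c$ to $b$ with every vertex in $\Span{c, b}$, and since $c, b \in Z$ the preliminary remark gives $\Span{c, b} \subset Z$; prepending $a$ completes the geodesic inside $Z$.

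To produce $c$, I would start from any geodesic $a = c_0, c_1, \ldots, c_n = b$ in $\Csurv(S)$ and try to push $c_1$ into $Z$. If $c_1 \subset Z$, take $c := c_1$. If $c_1$ is disjoint from $Z$, then $c_1$ is also disjoint from $b$, forcing $d(c_1, b) = 1$ and hence $n = 2$; take $c$ to be a boundary component of $Z$ on the complementary region containing $c_1$. Such a boundary component is surviving in $S$ (by the construction of $\Span{a,b}$ no complementary region is a disc or once-punctured disc), and is disjoint from both $a$ and $b$, which live in $\mathrm{int}(Z)$.

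The substantive case is when $c_1$ meets $\partial Z$ essentially. Here my plan is to use a minimization argument: over all $\Csurv(S)$-geodesics from $a$ to $b$ (with each $c_i$ in minimal position with $\partial Z$), choose one minimizing the total intersection $T := \sum_i |c_i \cap \partial Z|$. Assuming $T > 0$, pick any $c_i$ with positive intersection and select an outermost disc $D \subset S \setminus Z$ bounded by an arc $\alpha \subset c_i$ and an arc $\alpha'' \subset \delta$ of some boundary component $\delta$ of $Z$; the surgery swapping $\alpha$ for a pushoff of $\alpha''$ produces $c_i'$ with $|c_i' \cap \partial Z| = |c_i \cap \partial Z| - 2$. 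Provided $c_i'$ is adjacent to both $c_{i-1}$ and $c_{i+1}$, the new sequence is still a geodesic (it is a path of length $n$ with endpoints at distance $n$), contradicting minimality of $T$. Once $T = 0$, every $c_i$ lies in $Z$ or in $S \setminus Z$, and vertices in $S \setminus Z$ can be replaced one at a time by distinct boundary components of $Z$, yielding a geodesic entirely in $Z$ and reducing to one of the first two cases for $c_1$.

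The hardest part is controlling the surgery: choosing $D$ so that the $\delta$-arc $\alpha''$ avoids $c_{i\pm 1}$, and checking that $c_i'$ remains essential, nonperipheral, and does not bound a once-punctured disc. The first point can be handled by selecting the outermost disc so its $\delta$-side lies in a gap of $c_{i\pm 1} \cap \delta$; if every gap is blocked, one first surgers $c_{i\pm 1}$ inductively, which is fine because the overall complexity $T$ still decreases. The second point is a standard innermost-disc argument, using that the boundary components of $\Span{a,b}$ are themselves surviving to exclude the bad configurations.
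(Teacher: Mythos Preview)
Your proposal has two genuine gaps. First, you assert that boundary components of $Z=\Span{a,b}$ are surviving because no complementary region is a disk or once-punctured disk; but that only shows they are \emph{essential} --- a complementary region can be a multiply-punctured disk, whose boundary is then not surviving. In case~2 the conclusion is salvageable (the region containing the surviving curve $c_1$ is not a punctured disk, so a boundary component on that side is surviving), but in the surgery step you invoke the false general claim to rule out bad $c_i'$, and there it actually matters. Second, and more seriously, the surgery is ill-defined: you put each $c_i$ in minimal position with $\partial Z$ and then ask for an outermost disk $D\subset S\setminus Z$ bounded by an arc of $c_i$ and an arc of $\delta\subset\partial Z$. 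Such a disk is precisely a bigon between $c_i$ and $\delta$, which minimal position forbids; the arcs of $c_i$ in each complementary region are essential there, and there is no innocuous way to push them across $\partial Z$. The downstream issues you flag (arranging adjacency to $c_{i\pm1}$, the genus-one edge convention, replacing leftover vertices by \emph{distinct surviving} boundary components of $Z$) only compound this.

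The paper's argument avoids surgery entirely via a dichotomy. If $Z$ has full genus it is a witness, and Lemma~\ref{lem:isometric-embeddings} gives an isometric embedding $\Csurv(Z)\hookrightarrow\Csurv(S)$, so any geodesic in $\Csurv(Z)$ does the job. If $Z$ is not a witness, some surviving curve lies in its complement, forcing $d(a,b)\le 2$; that same fact guarantees at least one boundary component of $Z$ is surviving, and it serves as the midpoint.
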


\begin{proof}
If \(a\) and \(b\) fill a witness \(Z\) the result follows from the fact that \(\Csurv(Z)\) is isometrically embedded in \(\Csurv(S)\) (Lemma \ref{lem:isometric-embeddings}). Otherwise there is a surviving curve in the complement of \(\Span{a,b}\), so \(\dist(a,b)\leq 2\). If the distance is one the result is obvious, so suppose the distance is two.  Since there is a surviving curve in the complement of \(\Span{a,b}\), not all boundary components of \(\Span{a,b}\) can be the boundary of a punctured disk, i.e.\ be not surviving. So we can pick a surviving boundary component \(c\) and the path \(a,c,b\) is a geodesic contained in \(\Csurv(\Span{a,b})\).
\end{proof}

Next we use Lemma \ref{lem:geodesic-in-span} to prove the existence of geodesics behaving well with respect to subsurface projection.

\begin{lemma}\label{lem:replacement-scheme}
Let \(a,b\in \Csurv(S)\). Then there is a geodesic \(c_0=a,c_1,\dots,c_m=b\) so that for every \(0\leq i\leq j\), \(c_i\subset \Span{c_0,c_j}\). In particular, for any subsurface \(Z\) such that \(\pi_Z(c_1)\neq \emptyset\), either \(\pi_Z(a)\neq \emptyset\), or \(\pi_Z(c_j)\neq \emptyset\) for all \(j\geq 1\).
\end{lemma}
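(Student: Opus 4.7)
The plan is to build the desired geodesic by backward induction, with Lemma~\ref{lem:geodesic-in-span} as the only non-trivial ingredient. Set $m=\dist(a,b)$ and put $c_m=b$. Assuming $c_i$ has been chosen with $\dist(c_0,c_i)=i$, apply Lemma~\ref{lem:geodesic-in-span} to $c_0=a$ and $c_i$ to obtain a length-$i$ geodesic $a=f_0,f_1,\dots,f_i=c_i$ whose vertices all lie in $\Span{c_0,c_i}$, and set $c_{i-1}=f_{i-1}$. Then consecutive curves are adjacent and $\dist(c_0,c_i)=i$ for every $i$, so $c_0,c_1,\dots,c_m$ is a geodesic from $a$ to $b$; moreover the construction directly yields $c_{i-1}\subset\Span{c_0,c_i}$ for every $i\geq 1$.

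To upgrade this to the full statement $c_i\subset\Span{c_0,c_j}$ for all $0\leq i\leq j$, I would induct on $j-i$. The case $j=i+1$ is given by the construction. For $j>i+1$ the induction hypothesis gives $c_{i+1}\subset\Span{c_0,c_j}$, so $c_0$ and $c_{i+1}$ are both curves in $\Span{c_0,c_j}$, and the parenthetical remark from Section~\ref{sec:prelim} implies $\Span{c_0,c_{i+1}}\subset\Span{c_0,c_j}$ (up to homotopy); composing with $c_i\subset\Span{c_0,c_{i+1}}$ gives the desired containment.

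For the ``in particular'' clause, suppose $\pi_Z(c_1)\neq\emptyset$, fix $j\geq 1$, and assume towards a contradiction that $\pi_Z(a)=\emptyset$ and $\pi_Z(c_j)=\emptyset$. Then both $c_0$ and $c_j$ admit representatives disjoint from $Z$ (respectively from the core of $Z$, in the annular case); put them in minimal position with this property and take a regular neighborhood disjoint from $Z$. The filling subsurface $\Span{c_0,c_j}$ is obtained by capping off disks with at most one puncture, and since $Z$ is an essential and nonperipheral subsurface (or an essential annulus), it cannot be contained in such a disk. Hence $\Span{c_0,c_j}$ can be realized disjointly from $Z$, contradicting $c_1\subset\Span{c_0,c_j}$ together with $\pi_Z(c_1)\neq\emptyset$.

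The main obstacle I expect is the last step, namely making rigorous the assertion that both filling curves being disjoint from $Z$ forces the filled subsurface to be disjoint from $Z$, especially in the annular case where the relevant object is the core curve rather than a subsurface of positive complexity; the earlier inductive part is a routine repeated application of Lemma~\ref{lem:geodesic-in-span} and the monotonicity of $\Span{\cdot,\cdot}$.
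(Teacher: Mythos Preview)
Your proof is correct and follows essentially the same route as the paper. The paper also builds the geodesic by repeatedly invoking Lemma~\ref{lem:geodesic-in-span} on shorter and shorter initial segments (it phrases this as ``successively replacing shorter and shorter initial sub-segments'' rather than as a backward induction, but the content is identical), obtains $c_{j}\subset\Span{c_0,c_{j+1}}$ from the construction, and then uses the monotonicity of $\Span{\cdot,\cdot}$ to get the nested chain $\Span{c_0,c_1}\subset\cdots\subset\Span{c_0,c_m}$; for the ``in particular'' clause the paper simply asserts that $c_1\subset\Span{c_0,c_j}$ together with $\pi_Z(c_0)=\emptyset$ forces $\pi_Z(c_j)\neq\emptyset$, which is exactly the statement you justify more carefully (and your justification, including the annular case, is fine---the complementary component of the regular neighborhood containing $Z$ or its core cannot be a once-punctured disk, so it is not filled in).
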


\begin{proof}
By Lemma~\ref{lem:geodesic-in-span}, $a$ and $b$ may be joined by a geodesic 
\(b_0,\dots, b_m\) included in \(\Span{a,b}\). Apply again the lemma to replace the geodesic between $b_0$ and $b_{m-1}$ by a geodesic included in \(\Span{b_0,b_{m-1}}\). Go on successively replacing shorter and shorter initial sub-segments.
Call \(c_0=a,c_1,\dots,c_m=b\) the final geodesic. By construction, each $c_j$ is included in $\Span{c_0, c_{j+1}}$, so that 
\[
\Span{c_0, c_1} \subset \Span{c_0, c_2} \subset \cdots \subset \Span{c_0, c_m}
\]
and thus for every \(0<i\leq j\), \(c_i\) is contained in \(\Span{c_0,c_j}\).

%Let \(b_0,\dots, b_m\) be any geodesic between \(a\) and \(b\). Let \(m_1\) be the last index so that \(\Span{a,b_{m_1}}\subsetneq \Span{a,b}\).
% Replace the geodesic between \(a\) and \(b_{m_1}\) with a new geodesic contained in \(\Span{a,b_{m_1}}\) (by applying Lemma \ref{lem:geodesic-in-span}). 
% Then look at the last index \(m_2< m_1\) such that \(\Span{a, b_{m_2}}\subsetneq\Span{a,b_{m_1}}\) and replace the geodesic between \(a\) and \(b_{m_2}\) with a geodesic contained in \(\Span{a,b_{m_2}}\) (again, using Lemma \ref{lem:geodesic-in-span}). 
% Keep repeating until the process stops. Call \(c_0=a,c_1,\dots,c_m=b\) the final geodesic. By construction, for every \(0<i\leq j\), \(c_i\) is contained in \(\Span{c_0,c_j}\).

Now let \(Z\) be a subsurface such that \(\pi_Z(c_1)\neq \emptyset\), and suppose \(\pi_Z(c_0)=\emptyset\). Since for every \(j\geq 2\) we have that \(c_1\subset \Span{c_0,c_j}\), \(c_j\) has to have nonempty projection to \(Z\). 
\end{proof}

\begin{prop}\label{prop:exist-geo}
Fix \(a_0\in\Csurv(S)\) and a pseudo-Anosov mapping class \(\varphi\) with attracting lamination \(\lambda^+\). Let \(A\) and \(B\) be the constants from Lemma \ref{lem:geodesics-between-iterates}, and set \(C:=A+2B\). Then for every \(n\) there is a \((\lambda^+,C)\)-like geodesic between \(\varphi^{-n}(a_0)\) and \(\varphi^n(a_0)\).
\end{prop}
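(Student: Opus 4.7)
The plan is to start from any geodesic between $\varphi^{-n}(a_0)$ and $\varphi^n(a_0)$, apply Lemma~\ref{lem:replacement-scheme} to produce a geodesic $c_0,c_1,\dots,c_m$ with nesting $c_i\subset \Span{c_0,c_j}$ for $0\le i\le j\le m$, and then verify the $(\lambda^+,A+2B)$-like condition by a case analysis on the projection to a proper subsurface $Z$.

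The key observation is that since a proper essential subsurface $Z$ cannot be contained in a disk of $S$, if both $c_0,c_j$ can be realized disjoint from $Z$ then so can $\Span{c_0,c_j}$; combined with the nesting, this extends the ``in particular'' part of Lemma~\ref{lem:replacement-scheme} to every $c_i$: for every $c_i$ with $\pi_Z(c_i)\neq\emptyset$ and every $j\ge i$, either $\pi_Z(c_0)\neq\emptyset$ or $\pi_Z(c_j)\neq\emptyset$. In particular (taking $j=m$) at least one of $\pi_Z(c_0),\pi_Z(c_m)$ is nonempty. Fix such $c_i$ and $Z$. If $\pi_Z(c_0)=\emptyset$, the structural fact forces $\pi_Z(c_j)\neq\emptyset$ for every $j\ge i$; then Lemma~\ref{lem:geodesics-between-iterates}(2) lets me apply Corollary~\ref{cor:qBGIT} to the subpath $c_i,\dots,c_m$ to get $\dist_Z(\pi_Z(c_i),\pi_Z(c_m))\le B$, and Lemma~\ref{lem:geodesics-between-iterates}(1) gives $\dist_Z(\pi_Z(c_m),\pi_Z(\lambda^+))\le A$, totalling $A+B$. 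The dual case $\pi_Z(c_m)=\emptyset$ would be handled by a second application of Lemma~\ref{lem:replacement-scheme} to the reversed geodesic, yielding an analogous structural fact from the $c_m$ side.

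The delicate case is when both $\pi_Z(c_0)$ and $\pi_Z(c_m)$ are nonempty but the projection becomes empty at some intermediate $c_j$; this is where the extra $B$ in $A+2B$ is spent. The plan is to apply Corollary~\ref{cor:qBGIT} twice---once on the maximal projection-nonempty interval $[a,b]$ containing $i$, yielding $\dist_Z(\pi_Z(c_i),\pi_Z(c_a))\le B$, and once on an adjacent projection-nonempty interval reaching $c_0$ or $c_m$, yielding another $B$---and pay the remaining $A$ via Lemma~\ref{lem:geodesics-between-iterates}(1) at the endpoint. The main obstacle will be bridging the two intervals across the empty gap cleanly: the one-sided nesting from Lemma~\ref{lem:replacement-scheme} alone does not block such a gap when $\pi_Z(c_0)\neq\emptyset$, so I expect either an iterative two-sided application of Lemma~\ref{lem:replacement-scheme} producing a geodesic with nesting from both endpoints (forcing the gap to collapse), or a more subtle argument combining the nesting $c_a\subset \Span{c_0,c_b}$ with the empty-projection curves on either side of $[a,b]$ to control the bridging distance in $Z$.
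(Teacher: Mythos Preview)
Your plan is close to the paper's, but the ``delicate case'' you flag is exactly where your argument is incomplete, and neither of your speculated fixes works as stated. A single application of Lemma~\ref{lem:replacement-scheme} only gives one-sided nesting \(c_i\subset\Span{c_0,c_j}\); applying it again to the reversed geodesic produces a \emph{different} geodesic with nesting from \(c_m\), and there is no obvious way to merge the two into a single geodesic with simultaneous two-sided nesting. Your second idea---bridging across an empty gap via the nesting---does not go through either: when \(\pi_Z(c_0)\neq\emptyset\), the nesting \(c_j\subset\Span{c_0,c_{j+1}}\) gives no control on the \(Z\)-projection of curves past \(c_{j+1}\).

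The paper's resolution is to keep all the intermediate geodesics from an iterated construction rather than discarding them. Concretely: starting from any geodesic \((c_j^{(0)})\), for \(i=1,\dots,m-1\) replace the tail segment from \(c_{i-1}^{(i-1)}\) to \(c_m\) using Lemma~\ref{lem:replacement-scheme}, obtaining \((c_j^{(i)})\). The first \(i\) vertices stabilise, so the final geodesic \((c_j)=(c_j^{(m-1)})\) satisfies \(c_j=c_j^{(j)}\) for all \(j\). Now fix \(c_i\) with \(\pi_Z(c_i)\neq\emptyset\). If there is a largest \(j<i\) with \(\pi_Z(c_j)=\emptyset\), then \(c_j=c_j^{(j+1)}\) and \(c_{j+1}=c_{j+1}^{(j+1)}\), and the \emph{intermediate} geodesic \((c_k^{(j+1)})_{k\ge j}\) is precisely the output of Lemma~\ref{lem:replacement-scheme} with base point \(c_j\). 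Its ``in particular'' clause, with \(\pi_Z(c_j)=\emptyset\) and \(\pi_Z(c_{j+1})\neq\emptyset\), forces \(\pi_Z(c_k^{(j+1)})\neq\emptyset\) for all \(k\ge j+1\); since this intermediate path is still a geodesic between \(\varphi^{-n}(a_0)\) and \(\varphi^n(a_0)\), Lemma~\ref{lem:geodesics-between-iterates}(2) gives \(\dist_Z(\pi_Z(c_{j+1}),\pi_Z(\varphi^n(a_0)))\le B\). A second application of Lemma~\ref{lem:geodesics-between-iterates}(2) on the final geodesic from \(c_{j+1}\) to \(c_i\) (all projections nonempty by maximality of \(j\)) gives another \(B\), and the endpoint contributes \(A\), totalling \(A+2B\). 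The point you were missing is that the base of the nesting must be allowed to slide to \(c_j\), and this is exactly what remembering the intermediate stages buys you.
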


\begin{proof}
We will start from any geodesic between the endpoints and construct a finite sequence of geodesics, each obtained by modifying the previous one, such that the last one is \((\lambda^+,C)\)-like. To this end, we will repeatedly use Lemma \ref{lem:replacement-scheme}, see Figure~\ref{fig:good-geodesic}.

Start with any geodesic \(c_0^{(0)}=\varphi^{-n}(a_0),c_1^{(0)},\dots,c_m^{(0)}=\varphi^n(a_0)\). For every \(i\) between \(1\) and \(m-1\), we let \[c_0^{(i)}=\varphi^{-n}(a_0),c_1^{(i)},\dots,c_m^{(i)}=\varphi^n(a_0)\] to be a geodesic obtained from \[c_0^{(i-1)}=\varphi^{-n}(a_0),c_1^{(i-1)},\dots,c_m^{(i-1)}=\varphi^n(a_0)\] by replacing \[c_{i-1}^{(i-1)}=,c_1^{(i-1)},\dots,c_m^{(i-1)}\] with a geodesic given by Lemma \ref{lem:replacement-scheme}. Note that, by construction:
\begin{itemize}
\item  for every \(j\leq i\), \(c_j^{(i)}=c_j^{(i-1)}\);
\item for every \(i\), \(c_0^{(i)}=\varphi^{-n}(a_0)\) and \(c_m^{(i)}=\varphi^{n}(a_0)\).
\end{itemize}

\begin{figure}[hb]
\begin{center}
\begin{overpic}[scale=2]{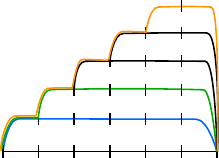}
\put(51,6){\((c_i^{(0)})_i\)}
\put(51,21){\(\textcolor[RGB]{0,100,255}{(c_i^{(1)})_i}\)}
\put(51,35){\(\textcolor[RGB]{0,170,0}{(c_i^{(2)})_i}\)}
\put(45,60){\(\textcolor[RGB]{255,150,0}{(c_i^{(m-1)})_i}\)}
\put(-18,2){\(\varphi^{-n}(a_0)\)}
\put(102,2){\(\varphi^{n}(a_0)\)}
\end{overpic}
\caption{A schematic picture of the construction of a \((\lambda^+,C)\)-like geodesic}\label{fig:good-geodesic}
\end{center}
\end{figure}

We claim that \((c_i)_i:=(c_i^{(m-1)})_i\) is \((\lambda^+,C)\)-like. Indeed, fix \(i\) and let \(Z\) be a subsurface so that \(\pi_Z(c_i)\neq \emptyset\). If for every \(j\leq i\) the projection of \(c_j\) is not empty, by Lemma \ref{lem:geodesics-between-iterates} we get
\[\dist_Z(\pi_Z(c_i),\pi_Z(\lambda^+))\leq \dist_Z(\pi_Z(c_i),\pi_Z(\varphi^{-n}(a_0)))+\dist_Z(\pi_Z(\varphi^{-n}(a_0)),\pi_Z(\lambda^+))\leq B+A.\]
Otherwise, let \(k\geq 1\) be the smallest integer so that \(\pi_Z(c_{i-k})=\emptyset\). Look at \(c_{i-k+1}\). By Lemma \ref{lem:replacement-scheme} applied to \((c_{j}^{(i-k+1)})_{j=i-k}^m\), we know that \(\pi_Z(c_{j}^{(i-k+1)})\neq \emptyset\) for every \(j\geq i-k+1\), and therefore by Lemma \ref{lem:geodesics-between-iterates}
\[\dist_Z(\pi_Z(c_{i-k+1}),\pi_Z(\varphi^n(a_0)))\leq B.\]
Since \(\pi_Z(c_j)\neq\emptyset\) for every \(j=i-k+1,\dots i\), again by Lemma \ref{lem:geodesics-between-iterates} we have
\[\dist_Z(\pi_Z(c_i),\pi_Z(c_{i-k+1}))\leq B.\]
Combining these two estimates with Lemma \ref{lem:geodesics-between-iterates} we get
\[\dist_Z(\pi_Z(c_i),\pi_Z(\lambda^+))\leq 2B+A.\] 
\end{proof}

Denote by \(\C_{(\lambda^+,C)}(S)\) the set of \((\lambda^+,C)\)-like curves. For \(a,b\in\Csurv(S)\), let \(\G_{(\lambda^+,C)}(a,b)\) be the set of \((\lambda^+,C)\)-like geodesics between \(a\) and \(b\).

We have the following local finiteness result:

\begin{lemma}\label{lem:finiteness}
For every $R$ there is \(N\) depending only on $R$, \(S\) and \(C\) such that for every \(c\in\C_{(\lambda^+,C)}(S)\),
\[\left|B_{R}(c)\cap \C_{(\lambda^+,C)}(S)\right|\leq N.\]
\end{lemma}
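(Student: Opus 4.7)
The plan is to combine the subsurface projection bounds implied by $(\lambda^+,C)$-likeness with the local finiteness of the marking complex of $S$.

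First I would observe that any two $(\lambda^+,C)$-like curves $c,c'$ satisfy, for every proper subsurface $Z$ to which both project nontrivially, the triangle inequality bound
\[\dist_Z(c,c')\leq \dist_Z(c,\lambda^+)+\dist_Z(\lambda^+,c')\leq 2C.\]
Next I would convert a bound on the $\Csurv(S)$-distance into a bound on the intersection number. Since adjacent vertices of $\Csurv(S)$ are already adjacent in $\C(S)$, the inclusion $\Csurv(S)\hookrightarrow \C(S)$ is $1$-Lipschitz, and adjacent curves in $\C(S)$ have geometric intersection at most one. A standard induction along a geodesic in $\C(S)$ joining $c$ and $c'$ then yields $i(c,c')\leq M(R)$ for some function $M=M(R,S)$.

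With these two controls in place, I would fix once and for all a complete clean marking $\mu^+$ of $S$ all of whose subsurface projections are uniformly close to $\pi_Z(\lambda^+)$ (such a marking exists because $\lambda^+$ is filling and can be approximated in every projection by a suitably large iterate of a fixed curve under $\varphi$). For each $(\lambda^+,C)$-like $c'\in B_R(c)$, I would then extend $c$ and $c'$ to complete clean markings $\mu,\mu'$ by modifying $\mu^+$ so that $c$, respectively $c'$, is a base curve, at a cost controlled by $i(c,c')$ and the $(\lambda^+,C)$-like estimates. Steps 1--2 together with the approximation property of $\mu^+$ then give a uniform bound, depending only on $R$, $C$ and $S$, on every subsurface projection $\dist_Z(\mu,\mu')$. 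The Masur--Minsky distance formula for the marking complex then yields a uniform upper bound on the marking-complex distance between $\mu$ and $\mu'$, and local finiteness of the marking complex gives the desired uniform bound on the number of $\mu'$, hence on the number of $c'$.

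The main obstacle is the construction of the markings $\mu,\mu'$ and the control of subsurface projections to proper $Z$ for which only one of $c,c'$ projects nontrivially: the naive triangle inequality of the first step fails there. I would handle these cases using $\mu^+$ (and hence $\lambda^+$) as a coarse anchor: since $\lambda^+$ projects nontrivially to every proper subsurface and every base curve and transversal of $\mu,\mu'$ has, by construction, bounded projection to $\lambda^+$, the quantity $\dist_Z(\mu,\mu')$ remains uniformly bounded in these exceptional cases as well.
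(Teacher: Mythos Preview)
Your overall strategy---extend each $(\lambda^+,C)$-like curve to a complete marking whose proper subsurface projections are all uniformly close to $\lambda^+$, then invoke the Masur--Minsky distance formula and local finiteness of the marking graph---is exactly the route the paper takes. However, Step~2 contains a genuine error: the claim that $d_\C(c,c')\leq R$ implies $i(c,c')\leq M(R)$ is false. There is no such ``standard induction''; already in the Farey graph of the once-punctured torus, $0/1$ and $n/1$ are both adjacent to $1/0$, hence at distance~$2$, yet $i(0/1,n/1)=n$. Curve-graph distance simply does not bound intersection number. Fortunately this step is also unnecessary: the paper's argument never bounds $i(c,c')$.

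The substantive gap is your marking construction, which is where the actual work lies. Fixing a single $\mu^+$ and ``modifying'' it so that $c$ becomes a base curve would incur a cost governed by $i(c,\mu^+)$, which is unbounded as $c$ ranges over $(\lambda^+,C)$-like curves; so the sketch as written does not produce markings with controlled projections. The paper instead builds, for each curve $a$, a \emph{$\lambda^+$-extension}: a complete clean marking $\mu$ containing $a$ as a base curve and intersecting each isotopy class of arc of $\lambda^+\cap(S\setminus a)$ in a uniformly bounded number of points (this is arranged using that there are only finitely many multiarcs on $S\setminus a$ up to its mapping class group). That construction directly gives $d_Y(\mu,\lambda^+)\leq D$ for every $Y$ disjoint from $a$; combined with the $(\lambda^+,C)$-like hypothesis for those $Y$ with $\pi_Y(a)\neq\emptyset$, one gets $d_Y(\mu,\mu')\leq 2\max\{C,D\}$ for all proper $Y$, and the distance formula (with the $Y=S$ term contributing the $d_\C(c,c')\leq R$ bound) finishes. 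You correctly flag the subsurfaces where $c$ fails to project as the main obstacle, but do not supply a construction that overcomes it.
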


To prove this result, we can apply \cite[Theorem A]{watanabe_uniform}, by noting that the diameter of the projection of \(B_{R}(c)\cap \C_{(\lambda^+,C)}(S)\) to any subsurface is bounded by \(\max\{R,2C\}\), and so the size of \(B_{R}(c)\cap \C_{(\lambda^+,C)}(S)\) is bounded by \(N_S(\max\{R,2C\},2)\) from Watanabe's theorem. On the other hand, we can prove the lemma more directly using local finiteness of the marking graph of a surface, as follows.

Key is the following result, which might be well-known.
\begin{lemma}
  There is a constant $D>0$ depending only on the topology of the
  surface so that the following is true.  Suppose $a$ is a curve,
  and $\lambda$ an ending lamination. Then there is a complete clean
  marking $\mu$ (in the sense of Masur-Minsky) so that
  \begin{enumerate}
  \item $a$ is a base curve in $\mu$, and
  \item for every subsurface $Y$ which is disjoint from $\alpha$ we have
    \[ d_Y(\mu, \lambda) \leq D. \]
  \end{enumerate}
\end{lemma}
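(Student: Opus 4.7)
The plan is to construct $\mu$ by induction on complexity, starting by declaring $a$ a base curve and then recursively choosing the remaining curves of $\mu$ to shadow $\lambda$ in each component of $S\ssm a$. I will prove a slightly more general statement by induction: for every subsurface $W$ and every arc-and-curve system $\nu$ on $W$ whose projection to every proper subsurface of $W$ is non-empty, there is a complete clean marking $\mu_W$ of $W$ with $d_Y(\mu_W,\nu)\leq D$ for every proper subsurface $Y$ of $W$. The lemma follows by applying this to $W=S$, $\nu=\lambda$, with $a$ forced as the first base curve; the non-empty projection hypothesis holds because $\lambda$ is filling.

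The inductive step is a standard \emph{pick a base curve, cut, recurse} argument. Given $(W,\nu)$, pick a curve $b$: at the top level take $b=a$, otherwise select $b$ from the curves of $\pi_W(\nu)$, or from a boundary-surgery of an arc in $\pi_W(\nu)$ if $\pi_W(\nu)$ contains no curves. Add $b$ as a base curve and recurse on each component $W_i$ of $W\ssm b$ with input $\nu_i:=\pi_{W_i}(\nu)$; the non-empty projection hypothesis propagates to $\nu_i$ because any proper subsurface of $W_i$ is a proper subsurface of $W$. Complexity strictly decreases, so the recursion terminates. At each stage the transverse curve $t_b$ is chosen cleanly (disjoint from the other base curves, meeting $b$ minimally) and so as to minimize the annular projection distance $d_b(t_b,\nu)$.

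For the bound, let $Y$ be a subsurface disjoint from $a$. Then $Y$ lies inside some component of $S\ssm a$, and either $Y$ is the annulus around one of the base curves chosen during the recursion---in which case the bound on $d_Y(\mu,\lambda)$ comes directly from the choice of the associated transverse curve---or $Y$ is a proper subsurface of some $W$ appearing in the recursion. In the latter case, the inductive hypothesis yields $d_Y(\mu_W,\nu_W)\leq D$, and the standard Behrstock-type estimate that $\pi_Y(\nu)$ and $\pi_Y(\pi_W(\nu))$ have bounded Hausdorff distance in $\AC(Y)$ transfers this to a bound on $d_Y(\mu,\lambda)$. Curves of $\mu$ outside $W$---including $a$ and $t_a$---either have empty projection to $Y$ or contribute a bounded-diameter set, since each single curve's projection has diameter at most one and $t_a$ can be chosen inside the regular neighborhood of the pair of pants adjacent to $a$.

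The main obstacle is ensuring that the errors introduced at each level of the recursion do not accumulate uncontrollably. This is handled by the facts that the recursion has depth at most $\xi(S)$ and that each level contributes a fixed additive constant (from the composition estimate for subsurface projections and from the choice of transverse curves), so the final $D$ depends only on $\xi(S)$. The base case $\xi(W)=1$ is immediate: a single base curve obtained from $\pi_W(\nu)$ together with a transverse curve minimizing its annular projection distance to $\nu$ suffices, since the only non-empty projections are to $W$ itself and to the annulus around the base curve.
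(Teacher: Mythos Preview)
Your inductive construction is essentially correct, but it takes a genuinely different route from the paper. The paper uses a short finiteness trick: since there are only finitely many multiarcs on $S\ssm a$ up to the action of $\mcg(S\ssm a)$, there is a uniform $D_0$ so that for any multiarc $A$ one can choose a complete clean marking $\mu$ with $a$ as a base curve whose curves meet every arc of $A$ in at most $D_0$ points; applying this with $A$ the set of isotopy classes of arcs of $\lambda\cap(S\ssm a)$ immediately gives $d_Y(\mu,\lambda)\leq D_0+1$ for every $Y$ disjoint from $a$, since representative arcs of $\pi_Y(\mu)$ and $\pi_Y(\lambda)$ then intersect at most $D_0$ times. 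This completely sidesteps the recursion, the composition-of-projections estimate, and the error accumulation over $\xi(S)$ levels. Your approach instead builds $\mu$ explicitly curve by curve and yields a mild generalisation (to filling arc-and-curve systems rather than ending laminations), at the cost of that bookkeeping. Two small imprecisions worth noting: when $Y$ equals a full component $W_i$ of $S\ssm a$, the bound does not come from the inductive hypothesis applied to some $W\supsetneq Y$ (the only such $W$ in your recursion is $S$ itself) but rather directly from the fact that the first base curve chosen inside $W_i$ is a surgery of $\pi_{W_i}(\lambda)$; and your concern about $t_a$ and other outside curves projecting to $Y$ is more cleanly dispatched by the standard fact that $\pi_Y(\mu)$ has uniformly bounded diameter for any clean marking.
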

\begin{proof}
  We first claim: there is a number $D_0$ so that the following is
  true.  If $a$ is any curve and $A$ is a multiarc based on
  $a$ (i.e.\ a finite collection of disjoint properly embedded
  arcs on $S-a$, no two of which are isotopic as such arcs), then
  there is a marking $\mu$ containing $a$ as a base curve which
  intersects each arc in $A$ in at most $D_0$ points. This claim
  follows since there are only finitely many multiarcs up to the
  action of the mapping class group of $S-a$.

  Now the lemma follows by applying the claim for $A$ the set
  of different isotopy classes of arcs of $(S-a)\cap \lambda$. If
  $Y$ is any subsurface disjoint from $\alpha$, we then have that an arc in
  $\pi_Y(\mu)$ and an arc in $\pi_Y(\lambda)$ intersect in at most
  $D_0$ points by construction, showing the lemma for $D = D_0 + 1$.
\end{proof}
We now fix $D$ once and for all, and call $\mu$ with the property of
the lemma a \emph{$\lambda$--extension of $a$}.
\begin{cor}
  There is a constant $K>0$ so that the following is true.  Suppose
  that $a, a'$ are two $(\lambda^+,C)$--like curves, and
  $\mu, \mu'$ are $\lambda^+$--extensions of $a, a'$. Then
  \[ d_{\mathcal{M}}(\mu, \mu') \leq Kd_{\mathcal{C}}(a,a') + K. \]
\end{cor}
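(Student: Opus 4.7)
The plan is to apply the Masur--Minsky distance formula for the marking graph,
\[
  d_{\mathcal{M}}(\mu, \mu') \;\asymp\; \sum_{Y} \{d_Y(\mu, \mu')\}_{K_0},
\]
valid for a sufficiently large threshold $K_0$ depending only on the topology of $S$, where the sum runs over all essential subsurfaces of $S$ and $\{x\}_{K_0}$ equals $x$ if $x \geq K_0$ and $0$ otherwise. The goal is to bound the right-hand side in terms of $d_{\mathcal{C}}(a,a')$.

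\textbf{Step 1: uniform control away from $Y_a$ and $Y_{a'}$.} Suppose $Y$ is any subsurface other than the annuli $Y_a, Y_{a'}$ with cores $a, a'$. If $Y$ is disjoint from $a$, the defining property of a $\lambda^+$--extension gives $d_Y(\mu, \lambda^+) \leq D$. Otherwise $\pi_Y(a) \neq \emptyset$, and since $a$ is a base curve of $\mu$ we have $d_Y(\mu, a) \leq 1$; combining with the $(\lambda^+, C)$--like bound $d_Y(a, \lambda^+) \leq C$ yields $d_Y(\mu, \lambda^+) \leq C+1$. The symmetric argument applied to $\mu'$ and $a'$, followed by the triangle inequality, gives $d_Y(\mu, \mu') \leq 2\max(C,D)+2$. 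Taking $K_0$ strictly larger than this quantity eliminates every such summand from the distance formula.

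\textbf{Step 2: the annular contributions by telescoping.} The remaining summands $\{d_{Y_a}(\mu, \mu')\}_{K_0}$ and $\{d_{Y_{a'}}(\mu, \mu')\}_{K_0}$ are the main obstacle, since the $(\lambda^+,C)$--like hypothesis says nothing about $\pi_{Y_a}(a)$ (which is empty). I fix a $\C(S)$--geodesic $a = c_0, c_1, \dots, c_n = a'$ with $n = d_{\mathcal{C}}(a,a')$ and choose, for each $i$, a $\lambda^+$--extension $\mu_i$ of $c_i$ whose pants decomposition also contains $c_{i+1}$ (possible since consecutive vertices of a $\C(S)$--geodesic are disjoint) and whose transversals are aligned with $\lambda^+$ in every annular projection. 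Two successive markings $\mu_i, \mu_{i+1}$ then share the pair $\{c_i, c_{i+1}\}$, so a second application of the argument of Step 1 to this pair --- now controlling also the annular projections $Y_{c_i}, Y_{c_{i+1}}$ by the coordinated choice of transversals --- gives uniformly bounded subsurface projections at every subsurface, whence $d_{\mathcal{M}}(\mu_i, \mu_{i+1}) \leq K_1$ by the distance formula. Summing telescopically, $d_{\mathcal{M}}(\mu_0, \mu_n) \leq K_1 n = K_1 d_{\mathcal{C}}(a,a')$.

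\textbf{Step 3: close the ends and combine.} The pairs $(\mu, \mu_0)$ and $(\mu_n, \mu')$ each consist of two $\lambda^+$--extensions of a single $(\lambda^+, C)$--like curve; running Step 1 once more bounds all non-self-annular summands, and the remaining single annular term is handled by the same coordination of transversals, yielding $d_{\mathcal{M}}(\mu, \mu_0), d_{\mathcal{M}}(\mu_n, \mu') \leq K_2$. Altogether
\[
  d_{\mathcal{M}}(\mu, \mu') \;\leq\; K_1\, d_{\mathcal{C}}(a,a') + 2K_2,
\]
which gives the claimed bound after collecting constants. The hardest step is Step 2: two $\lambda^+$--extensions of the same curve can in principle differ by arbitrary Dehn twists about it, and the whole weight of the argument rests on the coordinated alignment of the intermediate transversals with $\lambda^+$ so that each telescoping step contributes only a bounded amount to the marking distance.
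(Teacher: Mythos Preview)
Your Step~1 is essentially the paper's entire argument. The paper simply observes that $d_Y(\mu,\mu') \leq 2\max\{C,D\}$ for \emph{every} proper subsurface $Y \neq S$, so that only the $Y=S$ term survives in the distance formula, and that term is $d_{\mathcal C}(a,a')$ up to an additive constant. The annuli $Y_a, Y_{a'}$ are not exceptions: in the preceding lemma, ``$Y$ disjoint from $\alpha$'' is to be read as $\pi_Y(a)=\emptyset$, which includes $Y_a$. Indeed, the construction there produces a marking whose intersection with each arc of $\lambda^+$ on $S\smallsetminus a$ is bounded, and this in particular bounds the twist of the transversal $t_a$ about $a$ relative to $\lambda^+$, i.e.\ bounds $d_{Y_a}(\mu,\lambda^+)$. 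With this reading, Steps~2--3 are unnecessary.

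Moreover, Steps~2--3 do not work as written. The intermediate vertices $c_i$ of a $\mathcal C(S)$--geodesic from $a$ to $a'$ have no reason to be $(\lambda^+,C)$--like, so your Step~1 estimate breaks for any proper $Y$ with $\pi_Y(c_i)\neq\emptyset$ but $\pi_Y(c_{i+1})=\emptyset$: you then cannot bound $d_Y(\mu_i,\lambda^+)$, hence cannot bound $d_Y(\mu_i,\mu_{i+1})$. Relatedly, requiring $\mu_i$ to be a $\lambda^+$--extension of $c_i$ \emph{and} to contain $c_{i+1}$ as a base curve is in general impossible: the first condition forces every base curve of $\mu_i$ lying in $S\smallsetminus c_i$ to be close to $\lambda^+$ in the subsurface projections there, while $c_{i+1}$ is an arbitrary curve disjoint from $c_i$. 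Step~3 has the same defect: $\mu$ is \emph{given}, so you cannot coordinate its transversal at $a$, and without the annular bound from the extension lemma nothing controls $d_{Y_a}(\mu,\mu_0)$.
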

\begin{proof}
  This is an immediate consequence of the Masur-Minsky distance
  formula for markings -- every proper subsurface projection term
  \[ d_Y(\mu, \mu') \leq 2\max\{ C, D \} \]
  is uniformly bounded for $Y \neq S$.
\end{proof}

\begin{proof}[Proof of Lemma~\ref{lem:finiteness}]
  Let $\mu$ be a $\lambda^+$--extension of $c$. By the corollary, if
  $d \in B_{R}(c)$ is also $(\lambda^+,C)$-like, and $\eta$ is a
  $\lambda^+$-extension of $d$, then the distance in the marking graph
  between $\mu, \eta$ is at most $R=K(R+1)$. Since the marking
  graph is uniformly locally finite, any $R$--ball in the marking
  graph contains at most $N(R)$ elements. This shows that
  $(3g-3)N(R) = N$ has the desired property.
\end{proof}

A first consequence is the existence of \((\lambda^+,C)\)-like geodesics between the points at infinity of \(\varphi\).

\begin{lemma}\label{lem:between-boundary-points}
There are \((\lambda^+,C)\)-like geodesics between \(\lambda^-\) and \(\lambda^+\).
\end{lemma}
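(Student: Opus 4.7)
The plan is to apply an Arzelà–Ascoli–style diagonal argument to the sequence of geodesics produced by Proposition~\ref{prop:exist-geo}, using the local finiteness from Lemma~\ref{lem:finiteness}. First, I observe that $a_0$ itself is $(\lambda^+,C)$-like: Lemma~\ref{lem:geodesics-between-iterates}(1) with $n=0$ gives $\dist_Z(\pi_Z(a_0),\pi_Z(\lambda^+))\leq A\leq C$ for every subsurface $Z$ with $\pi_Z(a_0)\neq\emptyset$. For each $n$, let $\gamma_n$ be the $(\lambda^+,C)$-like geodesic from $\varphi^{-n}(a_0)$ to $\varphi^n(a_0)$ from Proposition~\ref{prop:exist-geo}, and let $c_n$ be a vertex of $\gamma_n$ closest to $a_0$.

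Next I claim $\dist(a_0,c_n)$ is bounded independently of $n$. Since $\varphi$ acts loxodromically on the (hyperbolic) graph $\Csurv(S)$, the sequences $\varphi^{\pm n}(a_0)$ converge to the two distinct ideal endpoints determined by $\lambda^{\pm}$; standard hyperbolicity estimates then imply that the Gromov product $(\varphi^{-n}(a_0)\cdot \varphi^n(a_0))_{a_0}$ stays bounded, hence so does the distance from $a_0$ to any geodesic between $\varphi^{\pm n}(a_0)$. Call this bound $R$. Since each $c_n$ is $(\lambda^+,C)$-like and lies in $B_R(a_0)$, Lemma~\ref{lem:finiteness} forces $\{c_n\}$ to be finite; passing to a subsequence, we may assume $c_n=c$ is independent of $n$. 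Parameterize $\gamma_n$ as $(c_i^n)_{i=-a_n}^{b_n}$ with $c_0^n=c$. For each fixed $i$, the vertex $c_i^n$ lies in $B_{|i|}(c)$ and is $(\lambda^+,C)$-like, so there are only finitely many possibilities by Lemma~\ref{lem:finiteness}. A standard diagonal extraction yields a subsequence such that for every $i\in\Z$, $c_i^n$ is eventually constant; call the limit $c_i^\infty$. Then $\gamma^\infty:=(c_i^\infty)_{i\in\Z}$ is a bi-infinite edge-path in $\Csurv(S)$ whose restriction to any finite interval is geodesic (as a limit of geodesic segments), hence $\gamma^\infty$ is itself a bi-infinite geodesic, and each of its vertices is $(\lambda^+,C)$-like.

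The remaining, and main, obstacle is to verify that the two ideal endpoints of $\gamma^\infty$ are precisely $\lambda^-$ and $\lambda^+$. For the positive ray, fix $k\geq 0$. For all $n$ sufficiently large, $c_k^\infty = c_k^n$ lies on the geodesic segment from $c$ to $\varphi^n(a_0)$ at distance $k$ from $c$, so the Gromov product $(c_k^\infty\cdot \varphi^n(a_0))_c$ is at least $k$. Letting $n\to\infty$ and using that $\varphi^n(a_0)$ tends to the ideal endpoint associated with $\lambda^+$, we conclude $(c_k^\infty\cdot \lambda^+)_c\geq k$ for every $k$, so $c_k^\infty\to\lambda^+$ in the Gromov boundary. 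The negative ray is handled identically, using $\varphi^{-n}(a_0)\to\lambda^-$. This completes the proof.
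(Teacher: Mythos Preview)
Your argument is correct and follows essentially the same strategy as the paper's proof: take the $(\lambda^+,C)$-like geodesics between $\varphi^{\pm n}(a_0)$ from Proposition~\ref{prop:exist-geo}, use the local finiteness of Lemma~\ref{lem:finiteness} together with a diagonal extraction to produce a bi-infinite $(\lambda^+,C)$-like geodesic, and then check its ideal endpoints are $\lambda^\pm$. The paper presents this as a sketch (citing \cite{bowditch_tight}), while you have supplied the details, in particular the anchoring at a common vertex $c$ and the Gromov-product verification of the endpoints; the only cosmetic point is to specify the orientation of your parameterization so that $c_{b_n}^n=\varphi^n(a_0)$, which you use implicitly.
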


The proof is the same as \cite[Lemma 3.1]{bowditch_tight}, which we sketch for the sake of completeness.

\begin{proof}
Consider \(c_n=\varphi^n(a_0)\), for \(n\in \Z\). For any \(R\geq 0\) and for any sufficiently large \(n,m\), geodesics in \(\G_{(\lambda^+,C)}(c_{-m},c_m)\) and \(\G_{(\lambda^+,C)}(c_{-n},c_n)\) are uniformly close in \(B_R(c_0)\), so by the finiteness result (Lemma \ref{lem:finiteness}) we can find a sequence of geodesics between larger and larger forward and backward iterates of \(a_0\) which agree in \(B_R(c_0)\). Using a diagonal argument, we find a sequence of geodesics agreeing on larger and larger balls, and thus converging to a geodesic between \(\lambda^-\) and \(\lambda^+\). Since being \((\lambda^+,C) \)-like is a local property, the limiting geodesic in \((\lambda^+,C)\)-like.
\end{proof}

Let \(\G_{(\lambda^+,C)}(\lambda^-,\lambda^+)\) be the set of \((\lambda^+,C)\)-like geodesics between \(\lambda^-\) and \(\lambda^+\). Denote by \(G_{(\lambda^+,C)}(\lambda^-,\lambda^+)\) the set of all curves contained in some geodesic in \(\G_{(\lambda^+,C)}(\lambda^-,\lambda^+)\).
\begin{lemma}\label{lem:finite_quotient}
We have \( \varphi(G_{(\lambda^+,C)}(\lambda^-,\lambda^+)) = G_{(\lambda^+,C)}(\lambda^-,\lambda^+) \) and the restriction of \(\varphi\) to \(G_{(\lambda^+,C)}(\lambda^-,\lambda^+)\) has only finitely many orbits.
\end{lemma}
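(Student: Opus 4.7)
\textbf{Proof plan for Lemma \ref{lem:finite_quotient}.} The plan is to first verify invariance by equivariance of the construction, then bound curves in $G_{(\lambda^+,C)}(\lambda^-,\lambda^+)$ against the orbit of $a_0$ via stability of quasigeodesics, and finally conclude finiteness of orbits by the local finiteness result of Lemma~\ref{lem:finiteness}.

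The invariance $\varphi(G_{(\lambda^+,C)}(\lambda^-,\lambda^+)) = G_{(\lambda^+,C)}(\lambda^-,\lambda^+)$ follows once I observe three facts: $\varphi$ acts as an isometry on $\Csurv(S)$, so it sends geodesics to geodesics; the endpoints $\lambda^\pm$ are $\varphi$-fixed; and the property of being $(\lambda^+,C)$-like is $\varphi$-equivariant, because subsurface projections are natural with respect to the mapping class group action, i.e.\ $\pi_{\varphi(Z)}(\varphi(\cdot)) = \varphi(\pi_Z(\cdot))$, and $\varphi(\lambda^+) = \lambda^+$. So if $\Gamma \in \G_{(\lambda^+,C)}(\lambda^-,\lambda^+)$, then $\varphi(\Gamma) \in \G_{(\lambda^+,C)}(\lambda^-,\lambda^+)$ as well, and the claim on the sets of curves follows.

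For the second part, the key step is: I claim there is a constant $R_0$, depending only on $S$, $\varphi$, $a_0$, so that every $c \in G_{(\lambda^+,C)}(\lambda^-,\lambda^+)$ satisfies $\dist(c,\varphi^n(a_0)) \leq R_0$ for some $n \in \Z$. Indeed, the orbit $(\varphi^n(a_0))_{n \in \Z}$ is a quasigeodesic in the hyperbolic space $\Csurv(S)$ from $\lambda^-$ to $\lambda^+$ (as $\varphi$ acts loxodromically with these as endpoints). Any geodesic $\Gamma \in \G_{(\lambda^+,C)}(\lambda^-,\lambda^+)$ from $\lambda^-$ to $\lambda^+$ then lies within bounded Hausdorff distance from this orbit, by stability of quasigeodesics in $\delta$-hyperbolic spaces with shared ideal endpoints; this supplies $R_0$.

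Given any $c \in G_{(\lambda^+,C)}(\lambda^-,\lambda^+)$, pick $n$ with $\dist(c, \varphi^n(a_0)) \leq R_0$. Then $\varphi^{-n}(c)$ is at distance at most $R_0$ from $a_0$, and by the equivariance argument from the first step, $\varphi^{-n}(c)$ is itself $(\lambda^+,C)$-like. Hence $\varphi^{-n}(c) \in B_{R_0}(a_0) \cap \C_{(\lambda^+,C)}(S)$. By Lemma~\ref{lem:finiteness}, this ball contains at most $N = N(R_0)$ curves, so there are at most $N$ orbits for the $\varphi$-action on $G_{(\lambda^+,C)}(\lambda^-,\lambda^+)$. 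The main (mild) obstacle is checking the Hausdorff-distance bound between a $(\lambda^+,C)$-like geodesic and the orbit quasigeodesic when one or both endpoints are ideal points, but this is standard in hyperbolic geometry once one approximates by geodesics between large finite iterates, for which the Morse property applies directly.
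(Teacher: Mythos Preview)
Your proof is correct and follows essentially the same approach as the paper: invariance by equivariance, then the Morse lemma against the orbit quasi-axis, then Lemma~\ref{lem:finiteness}. The paper organizes the endgame slightly differently (it bounds $d(x,\varphi(x))$ and projects onto a fixed $(\lambda^+,C)$-like geodesic rather than translating back to a ball around $a_0$), but the content is the same; note that to apply Lemma~\ref{lem:finiteness} with center $a_0$ you are implicitly using that $a_0$ is $(\lambda^+,C)$-like, which holds by Lemma~\ref{lem:geodesics-between-iterates}(1) since $A\leq C$.
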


\begin{proof}
The image under $\varphi$ of a \((\lambda^+,C)\)-like curve is a \((\lambda^+,C)\)-like curve, so \(G_{(\lambda^+,C)}(\lambda^-,\lambda^+)\) is invariant under $\varphi$. 
Consider any base point $c_0$, and let \(c_n=\varphi^n(c_0)\), for \(n\in \Z\). The sequence $(c_n)_n$ is a quasigeodesics; let $M$ be the corresponding Morse constant. By the Morse lemma, every point $x \in G_{(\lambda^+,C)}(\lambda^-,\lambda^+)$, which lies on a  \((\lambda^+,C)\)-like geodesics, is within distance $M$ of some $c_n$, so we get
\[
d(x, \varphi(x)) \leq 2M + d(c_n, c_{n+1}) = 2M + d(c_0, \varphi(c_0)) := R.
\]
Now let $\Gamma_0$ be any \((\lambda^+,C)\)-like geodesic, and note that $G_{(\lambda^+,C)}(\lambda^-,\lambda^+)$ is included in the $2\delta$- neighborhood of $\Gamma_0$.
From this we get that every orbit of a point of \(G_{(\lambda^+,C)}(\lambda^-,\lambda^+)\) meets the $2\delta$-neighborhood of a sub-segment of $\Gamma_0$ of length $R + 4\delta$. By local finiteness (Lemma~\ref{lem:finiteness}) applied with $R = 2\delta$, this set has at most $(R + 4\delta)N$ elements.
\end{proof}

Denote $p_\Gamma$ the closest point projection to some \((\lambda^+,C)\)-like geodesic $\Gamma$, and $<_\Gamma$ the order along $\Gamma$ from $\lambda^-$ to $\lambda^+$. The following lemma is certainly well-known, and furthermore it is not really need for the proof of Theorem \ref{thm:rational-surviving}; nevertheless it helps clarifying the proof.
\begin{lemma}\label{lem:order}
Assume $\mathrm{tl}(\varphi) \geq 10 \delta$. 
Then for every $x \in G_{(\lambda^+,C)}(\lambda^-,\lambda^+)$ we have 
\[
p_\Gamma (x) <_\Gamma p_\Gamma (\varphi(x)).
\]
\end{lemma}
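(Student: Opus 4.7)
The plan has two stages. First, I show that $y := p_\Gamma(x)$ and $z := p_\Gamma(\varphi(x))$ are forced to be far apart on $\Gamma$. Since $\Gamma$ and any $(\lambda^+,C)$-like geodesic through $x$ share the ideal endpoints $\lambda^\pm$, they lie within Hausdorff distance $2\delta$ in our $\delta$-hyperbolic space. Thus $\dist(x,y) \leq 2\delta$, and by Lemma~\ref{lem:finite_quotient} the same bound $\dist(\varphi(x),z) \leq 2\delta$ holds. The triangle inequality then gives
\[
\dist(y,z) \geq \dist(x,\varphi(x)) - 4\delta \geq \tl{\varphi} - 4\delta \geq 6\delta,
\]
so in particular $y \neq z$.

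Second, I show $z$ sits on the $\lambda^+$ side of $y$. The idea is to introduce $\Gamma' := \varphi(\Gamma)$, which is again a $(\lambda^+,C)$-like geodesic from $\lambda^-$ to $\lambda^+$ and hence within Hausdorff distance $2\delta$ of $\Gamma$. The composition $\Phi := p_\Gamma \circ \varphi \colon \Gamma \to \Gamma$ is an orientation-preserving coarse isometry of $\Gamma$, and because $\varphi$ acts hyperbolically with $\lambda^+$ as attracting fixed point, $\Phi$ is a coarse translation of $\Gamma$ in the $\lambda^+$-direction by an amount $\tl{\varphi} \pm O(\delta)$. Applied to $y$, this gives $\Phi(y) >_\Gamma y$ with gap at least $\tl{\varphi} - O(\delta) > 0$ under the assumption $\tl{\varphi} \geq 10\delta$. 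Finally, since $\varphi$ is an isometry and $\dist(x,y) \leq 2\delta$, we have $\dist(\varphi(x), \varphi(y)) \leq 2\delta$, so $z = p_\Gamma(\varphi(x))$ is within $O(\delta)$ of $\Phi(y) = p_\Gamma(\varphi(y))$, and we conclude $z >_\Gamma y$.

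The main technical obstacle will be verifying precisely that $\Phi$ is an orientation-preserving coarse translation of $\Gamma$ by approximately $\tl{\varphi}$. This is a standard fact of $\delta$-hyperbolic geometry, but it requires either a thin-triangle analysis of two geodesics with common ideal endpoints, showing that closest-point projection between them preserves orientation up to bounded error and that $\varphi$ preserves the $\lambda^-$-to-$\lambda^+$ orientation, or, equivalently, the use of the Busemann function at $\lambda^+$: writing $\beta_{\lambda^+}(\varphi(a), a) = -\tl{\varphi} + O(\delta)$ and identifying the signed parameter along $\Gamma$ with $-\beta_{\lambda^+}(\,\cdot\,, o) + O(\delta)$ for any basepoint $o \in \Gamma$ yields the claimed shift directly.
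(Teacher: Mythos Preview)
Your approach is sound but follows a different route from the paper. You argue directly by coarse geometry: since $x$ and $\varphi(x)$ each lie within $2\delta$ of $\Gamma$, their projections $y,z$ are at least $\tl{\varphi}-4\delta$ apart, and then a Busemann-function (or thin-triangle) computation identifies the \emph{signed} displacement along $\Gamma$ as $\tl{\varphi}+O(\delta)$, hence positive. This is a legitimate and standard line of reasoning; note, though, that your Stage~1 is subsumed by Stage~2 (the Busemann computation already gives both sign and magnitude), and that you leave the key step as a sketch without tracking the implicit $O(\delta)$ constants, so whether the specific threshold $10\delta$ suffices is not actually verified.

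The paper instead gives a short intermediate-value argument. The desired inequality holds for \emph{some} $x$, since orbits converge to $\lambda^+$. If it failed for some other point, then along a path in the $2\delta$-neighbourhood of $\Gamma$ joining the two one finds adjacent vertices $x',y'$ where the sign of $p_\Gamma(\varphi(\cdot))-p_\Gamma(\cdot)$ flips. Since $x',y'$ are adjacent and $\varphi(x'),\varphi(y')$ are adjacent, the four projections are pairwise within $O(\delta)$, and the sign flip forces $d(x',\varphi(x'))$ to be bounded by a small multiple of $\delta$, contradicting $\tl{\varphi}\geq 10\delta$. This avoids Busemann functions entirely and is more self-contained; your approach, by contrast, is more quantitative and actually computes the shift along $\Gamma$, which is extra information the paper does not extract.
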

\begin{proof}
This inequality is certainly satisfied for some $x$ since the orbit of any $x$ tends to $\lambda^+$. If some $y$ satisfies the reverse inequality, then along any path from $x$ to $y$ in the $2\delta$ neighborhood of $\Gamma$ we will find two adjacent points $x', y'$ such that $x'$ satisfies the wanted inequality while $y'$ satisfies the the reverse one. But since $\varphi(x'), \varphi(y')$ are also adjacent, this implies that $d(x', \varphi (x')) < 10\delta$, which contradicts our hypothesis on the translation length.
\end{proof}

Now, to prove Theorem \ref{thm:rational-surviving} one can use the same argument as in \cite[Lemma 3.4 and Corollary 3.5]{bowditch_tight}. We provide here an alternative argument.

\begin{proof}[Proof of Theorem \ref{thm:rational-surviving}]
Let $\varphi\in\mcg(S)$ be a pseudo-Anosov mapping class.
Lemma~\ref{lem:between-boundary-points} provides a \((\lambda^+,C)\)-like geodesic $\Gamma = (c_{i})_{i\in\Z}$ joining the fixed points $\lambda^-, \lambda^+$ of $\varphi$. Up to replacing $\varphi$ to one of its power, we can assume that $\mathrm{tl}(\varphi) \geq 10 \delta$, and thus Lemma~\ref{lem:order} applies.

Note first that there exists some $m>0$ such that $\Gamma \cap \varphi^{-m}(\Gamma)$ is infinite. Indeed, let $M>0$  be such that none of $\varphi^{-1} \Gamma, \dots, \varphi^{-M} \Gamma$ have infinitely many points in common with $\Gamma$; we look for a contradiction when $M$ is large. Note that these geodesics also pairwise intersect in finitely many points. So for $i$ large enough, no point  of the ball $B_{2\delta}(c_{i})$ belongs to more than one of these geodesics. On the other hand by hyperbolicity, each of these geodesics must meet this ball, and thus the ball must contain at least $M$ points. We get a contradiction as soon as $M$ is larger than the cardinality $N$ of the ball given by the local finiteness (Lemma~\ref{lem:finiteness}).

So fix some $m$ such that $\Gamma \cap \varphi^{-m}(\Gamma)$ is infinite. Since $\Gamma$ meets only finitely many orbits of $\varphi$, the set $\Gamma \cap \varphi^{-m}(\Gamma)$ contains infinitely many points of the orbit of some point $c$. Up to changing $\varphi$ into $\varphi^{-1}$, we may assume that infinitely many such points are included in the positive semi-orbit of $c$. Denote by $\gamma = [c \varphi^m (c)]$ a geodesic segment between \(c\) and \(\varphi^m(c)\), andset  $\ell := \dist(c, \varphi^m (c))$.

\textbf{Claim:} for every $p>0$, we have $\dist(c, \varphi^{pm} (c)) = p\ell$. 

Note that if we prove the claim, the path \[
\bigcup_{p \in \mathbb{Z}} \varphi^{pm} (\gamma)
\]
is a geodesic which is invariant for $\varphi^m$, and we are done.

So let us prove the claim and  let $p>0$. By hypothesis we can find points $c'_{1}, \dots, c'_{p}$ of the positive orbit of $c$ which belong to $\Gamma$, and whose images under $\varphi^m$ also belong to $\Gamma$. Up to extracting (and thanks to lemma~\ref{lem:order}), we can further assume that the order along $\Gamma$ is given by
\[
c,\ c'_{1},\ \varphi^m (c'_{1}),\ c'_{2}, \varphi^m (c'_{2}),\ \dots,\ c'_{p},\ \varphi^m (c'_{p}),
\]
so that there are integers $N_{1}, \dots, N_{p}$ such that these points are obtained from $c$ by applying successively $\varphi^{N_{1}}$, $\varphi^m$, 
$\varphi^{N_{2}}$, $\varphi^m$, and so on. In other words, we have:
\[
c,\ c'_{1}= \varphi^{N_{1}} (c),\ \varphi^m (c'_{1}),\ c'_{2} = \varphi^{m+N_{2}} (c'_{1}),\ \varphi^m (c'_{2}),\ \dots,\ c'_{p} = \varphi^{m+N_{p}} (c'_{p-1}),\ \varphi^m (c'_{p}).
\]
Note that since $\varphi$ is an isometry, for every integer $n$ and every point $x$ of the orbit of $c$, we have
$\dist(x, \varphi^n x) = \dist(c, \varphi^n c)$. For each $i$, let us denote $\ell_{i} = \dist(c, \varphi^{N_{i}} (c))$.
Thus the sub-segment $\Gamma_{0}$ of the geodesic $\Gamma$ from $c$ to $c' = \varphi^{(mp +\sum N_{i})} (c)$ has length $L = p\ell + \sum \ell_{i}$.
We now build another path with the same endpoints by ``cutting and pasting'' to reorder the sub-segments of $\Gamma_0$ by putting those of length $m$ before the all other ones. Namely, we consider the path
\[
\Gamma_{1} = [c \varphi^m (c)] \cup \cdots \cup [\varphi^{(p-1)m} (c) \varphi^{pm} (c)] \cup [\varphi^{pm} (c) \varphi^{pm+N_1} (c)] \cup \cdots \cup
[\varphi^{pm+ N_1 + \cdots + N_{p-1}} (c) \varphi^{pm+ N_1 + \cdots + N_p} (c)]
\]
(where $[xy]$ denotes as usual some geodesic path between $x$ and $y$). Note that $\Gamma_1$ has the same length $L = p\ell + \sum \ell_{i}$ and the same endpoints as $\Gamma_{0}$: thus it is a geodesic path. The claim follows.
\end{proof}

We end this section by showing Proposition \ref{prop:pA-is-rational}, i.e.\ that homeomorphisms which are pseudo-Anosov relatively to a set of points have the same stable translation length as the mapping class relative to the angle-\(\pi\) singularities.

\begin{proof}[Proof of Proposition \ref{prop:pA-is-rational}]
Let \(f\in\Homeo(S)\) be a pseudo-Anosov homeomorphism relatively to a finite set of points.  Fix a stable leaf segment \(s\) not containing any singular point and look at all unstable leaf segments, not passing through singular points, with both endpoints on \(s\) and not intersecting \(s\) in their interior. Collapse \(s\) to a point and note that the collection of loops we obtain generates the fundamental group of the surface, since the unstable foliation is filling. In particular, one such loop needs to be nonseparating.

We can then choose a nonseparating curve \(\alpha\) made by a stable segment \(s\) and an unstable segment \(u\), not passing through any singular point. Let \(p\) and \(q\) be the intersection points of \(s\) and \(u\). A sufficiently small neighborhood \(U\) of \(p\) can be identified with an open subset of \(\R^2\) with the vertical and horizontal foliations, and suppose \(p\) is identified with \((0,0)\). We can assume that \(U\cap s\) corresponds to a segment in the positive \(x\)-axis and and \(U\cap u\) corresponds to a segment in the positive \(y\)-axis. Given \(p'\) in \(U\), we can consider the stable half-leaf starting at \(p'\) and going right from \(p'\) and the unstable half-leaf starting at \(p'\) and going up. If \(p'\) is sufficiently close to \(p\), the half-leaves fellow-travel \(s\) and \(u\) sufficiently close and for sufficiently long so that they meet at a point \(q'\) close to \(q\) and, if we look at the leaf segments \(s'\) and \(u'\) between \(p'\) and \(q'\), we get a curve \(\alpha'\) homotopic to \(\alpha\), disjoint from it, and  not passing through any singular point.

Using this construction, and the density of periodic points for pseudo-Anosov homeomorphisms,
up to moving \(p\) by a small amount and taking a power, we can assume that \(p\) is fixed.

Our goal is to show that for every \(n\geq 0\), \(\distdagger(f^n(\alpha),\alpha)\) differs at most by one to the distance of their classes in \(S\ssm P_\pi\), where \(P_\pi\) is the set of angle-\(\pi\) singularities of \(f\). This implies that the stable translation length of \(f\) acting on \(\NCdagger(S)\) is the same as the stable translation length of \([f]_{P_\pi}\) acting on \(\NC(S\ssm P_\pi)\).

By our assumption, \(f^n(\alpha)=f^n(u)\cup f^n(s)\), where \(f^n(s)\) starts at \(p\) and is contained in \(s\) and \(f^n(u)\) starts at \(p\) and contains \(u\). By moving \(p\) by a small amount, we can find a curve \(\alpha_n\), also made by a stable segment \(s_n\) and an unstable segment \(u_n\), disjoint from \(f^n(\alpha)\) and homotopic to it relative to \(P_\pi\). Moreover, we can suppose that \(s_n\cap u=\emptyset\). Now look at any bigon formed by \(\alpha\) and \(\alpha_n\). This can only be made by a stable leaf segment and an unstable leaf segment.
%By Gauss--Bonnet for singular metrics, or by index theory, such a bigon has to contain an angle-\(\pi\) singularity.
By Gauss--Bonnet for singular metrics the total curvature in such a
bigon is $2\pi$ or $\pi$, depending on which scenario of
Figure~\ref{fig:parallel-loop} occurs, and thus the bigon has to contain an angle-\(\pi\) singularity. Alternatively, one could use index theory and observe that the total index of one of the foliations in the bigon is $1$ or $1/2$.
Hence, the two curves are in minimal position with respect to \(P_\pi\) and we can use Lemma \ref{lem:local-approximation} to deduce
\[\dist_{P_\pi}([f^n(\alpha)],[\alpha])=\dist_{P_\pi}([\alpha_n],[\alpha])=\distdagger(\alpha_n,\alpha).\]
Since \(|\distdagger(f^n(\alpha),\alpha)-\distdagger(\alpha_n,\alpha)|\leq 1\), we get that
\[|\distdagger(f^n(\alpha),\alpha)-\dist_{P_\pi}([f^n(\alpha)],[\alpha])|\leq 1,\]
as desired.
\end{proof}

\begin{figure}[h]
\begin{center}
\includegraphics{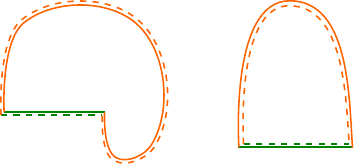}
\caption{Constructing \(\alpha_n\) (dotted) from \(f^n(\alpha)\)}\label{fig:parallel-loop}
\end{center}
\end{figure}

\bibliographystyle{alpha}
\bibliography{references}
\end{document}